\renewcommand{\mod}{\operatorname{mod}\nolimits}
\newcommand{\Mod}{\operatorname{Mod}\nolimits}
\newcommand{\add}{\operatorname{add}\nolimits}
\newcommand{\Hom}{\operatorname{Hom}\nolimits}
\newcommand{\End}{\operatorname{End}\nolimits}
\newcommand{\op}{{\operatorname{op}}}
\newcommand{\C}{\operatorname{\mathcal C}\nolimits}
\newcommand{\F}{\operatorname{\mathcal F}\nolimits}
\renewcommand{\P}{\operatorname{\mathcal P}\nolimits}
\newcommand{\K}{\operatorname{\mathcal K}\nolimits}
\newcommand{\T}{\operatorname{\mathcal T}\nolimits}
\newcommand{\M}{\operatorname{\mathcal M}\nolimits}
\newcommand{\N}{\operatorname{\mathcal N}\nolimits}
\newcommand{\MM}{\operatorname{\overline{\mathcal M}*}\nolimits}
\newcommand{\NN}{\operatorname{\overline{\mathcal N}*}\nolimits}
\newcommand{\TT}{\operatorname{\overline{\mathcal T}*}\nolimits}
\newcommand{\G}{\Gamma}
\newcommand{\val}{\operatorname{val}\nolimits}
\newtheorem{lemma}{Lemma}[section]
\newtheorem{proposition}[lemma]{Proposition}
\newtheorem{corollary}[lemma]{Corollary}
\newtheorem{theorem}[lemma]{Theorem}
\theoremstyle{definition}
\newtheorem{remark}[lemma]{Remark}
\newtheorem{definition}[lemma]{Definition}
\begin{document}

\topmargin 0cm
\oddsidemargin 0.5cm
\evensidemargin 0.5cm
\baselineskip=14pt

\title[The geometry of Brauer graph algebras]{The geometry of Brauer graph algebras and cluster mutations}

\author[Marsh]{Robert J. Marsh}
\address{Robert J. Marsh\\
School of Mathematics \\
University of Leeds\\
Leeds LS2 9JT\\
United Kingdom}
\email{marsh@maths.leeds.ac.uk}

\author[Schroll]{Sibylle Schroll}
\address{Sibylle Schroll\\
Department of Mathematics \\
University of Leicester \\
University Road \\
Leicester LE1 7RH \\
United Kingdom}
\email{ss489@le.ac.uk}

\subjclass[2010]{Primary  16G10, 16G20, 16E35; Secondary 13F60, 14J10}
\keywords{Special biserial algebras, Brauer graph algebras, tilting mutation, derived equivalence, ribbon graphs, marked surfaces, triangulations, cluster}

\begin{abstract}
In this paper we establish a connection between ribbon graphs and Brauer graphs.
As a result, we show that a compact oriented surface with marked points
gives rise to a unique Brauer graph algebra up to derived equivalence.
In the case of a disc with marked points we show that a dual construction in
terms of dual graphs exists. The rotation of a diagonal in an
$m$-angulation gives rise to a Whitehead move in the dual graph, and we
explicitly construct a tilting complex on the related Brauer graph algebras
reflecting this geometrical move.
\end{abstract}

\thanks{This work was supported by the Engineering and Physical Sciences
Research Council [grant number EP/G007490/1] and by the Leverhulme Trust
through an Early Career Fellowship for the second named author.}

\date{6 August 2014}
\maketitle

\section*{Introduction}

Recently there has been widespread renewed interest --- spurred on by the many interesting developments in cluster algebras --- in tilting phenomena. In particular, the geometrical interpretation of cluster-tilting objects in cluster categories as triangulations
suggests the consideration of similar models in relation to other algebras. One such area of interest is the class of finite-dimensional self-injective algebras.

Finite dimensional self-injective algebras encompass a large variety of algebras, such as the group algebras of finite groups, Hecke algebras and special biserial algebras. The last family is particularly interesting, as large parts of the representation theory and (co)homology are well-studied and understood.
The class of symmetric special biserial algebras coincides with the class of Brauer graph algebras~\cite{schroll}.
Brauer graph algebras
can be described in two important ways, firstly as a quotient of the path algebra of a quiver, and secondly via a graph, the Brauer graph, with a circular ordering on the edges around each vertex, together
with some special (exceptional) vertices with associated multiplicities. This
combinatorial description makes them particularly amenable to calculations and
combinatorial studies.

We observe that the definition of a Brauer graph without exceptional
vertices is essentially that of a ribbon graph (or fat graph).
Such a graph has a natural filling embedding into an oriented surface (see~\cite{Labourie}) giving rise to the cyclic ordering of the edges incident with each vertex. Conversely, such an embedding
gives rise to a Brauer graph without exceptional vertices.

In particular, Kauer~\cite{Kauer98} shows that if a certain local move
(which we call a \emph{Kauer move}) is applied to the graph of a Brauer graph
algebra, the resulting Brauer graph algebra is derived equivalent to the initial
algebra. We show that if a Brauer graph arises from a triangulation
of a compact oriented marked surface (regarded as the embedding of a graph),
then the Kauer move coincides with the flip of the triangulation which
corresponds to mutation in the cluster algebra associated to the surface~\cite{FominShapiroThurston08}. It follows that we can
associate a Brauer graph algebra to a compact oriented marked surface
(by choosing a triangulation) which is unique up to derived equivalence.
We also give a version of this for the case of a surface with boundary.

In the case of a disc with marked points on its boundary we consider not only triangulations but $m$-angulations. For such an $m$-angulation, regarded as a Brauer graph without exceptional
vertex, the Kauer move again coincides with the mutation
of the $m$-angulation~\cite{HughThomas07} (see also~\cite[\S11]{BuanThomas09})
in the cluster sense. We may also consider the dual graph of the $m$-angulation,
which is an $(m-1)$-ary tree. Mutation of the $m$-angulation induces a move
which coincides with a \emph{nearest neighbour interchange} (in the
sense of~\cite[\S2]{watermansmith73}, which refers to~\cite{MGB73}), or
\emph{Whitehead move}, i.e.\ the contraction and expansion of a given edge.
Note that in the case of a triangulation the effect on the dual graph
corresponds to the associativity rule as described, for example,
in~\cite{fominreading07}.

In the case of a triangulation of a disc, a mutation or flip of a
diagonal corresponds to four different things simultaneously: a mutation of
the associated cluster, a derived equivalence of the Brauer graph algebra
whose graph is the triangulation, a derived equivalence of the Brauer tree
algebra of the dual graph of the triangulation and,
using~\cite{demonetluo,Palu09} (see also~\cite{BaurKingMarsh,JensenKingSu}),
a derived equivalence of endomorphism algebras in Frobenius categories in the
cluster context.

We observe that in the case of Brauer graph algebras the Brauer graph algebra
associated to an $m$-angulation is of tame representation type whereas the one
associated to the dual graph is of finite representation type.

In Section 1 we recall the concept of a Brauer graph algebra and recall the
pivotal result of Kauer on derived equivalences of Brauer graph algebras which
lies at the heart of this paper.
As we expect the audience of this paper to have a more algebraic background we
begin Section 2 with a short recall of the theory of ribbon graphs and
surfaces, and go on to show how up to derived equivalence there is a unique
Brauer graph algebra associated to every compact oriented marked surface
(with or without boundary). In Section 3 we explain how our results are
connected with the theory of cluster algebras and cluster categories.
In Section 4 we consider $m$-angulations of a disc with marked points on its
boundary. We give an explicit two-term tilting complex, compatible with the
geometry, realising the derived equivalence of Brauer tree algebras induced by
the move on the dual graph induced by mutation of a diagonal in the
$m$-angulation. Finally, in Section 5 we give counter-examples to show that the
Brauer graph algebras of the dual graphs associated to two graphs
related by the Kauer move needn't necessarily be derived equivalent in
general, by considering the cases of a triangulation of a sphere and a
punctured disk.

We would like to thank the referee for helpful comments on the first version
of this paper.

\section{Brauer graph algebras}\label{section:notation}

Let $K$ be an algebraically-closed field and $A = KQ /I$ be a finite-dimensional algebra given by the
quiver $Q$ and an admissible ideal $I$.  All modules we consider are left
modules and we denote the category of finite dimensional left $A$-modules by
$A$-$\mod$.  If $ \cdot \stackrel{\alpha}{\longrightarrow} \cdot
\stackrel{\beta}{\longrightarrow} \cdot$ are two consecutive arrows in $KQ$, we
write the corresponding path as $\beta \alpha$.

\subsection{Brauer Graph Algebras}

We will briefly recall the standard setup of Brauer graph algebras.  Details can
be found, for example, in~\cite{Benson98, Kauer98, Roggenkamp96}.

Let $\G$ be a finite graph with at least one edge. Denote by $\G_0$ the set of
vertices of $\G$ and by $\G_1$ the set of edges.  Define a function
$m: \G_0 \to \mathbb{N} \setminus \{0\}$, called the
\emph{multiplicity function} of $\G$. For any graph $\Gamma$,
there is a local embedding of
$\Gamma$ into the plane with the property that an orientation of the plane induces
a cyclic ordering around the vertices of $\Gamma$.

In general, we consider a fixed local embedding of $\G$ and, unless otherwise
stated, the clockwise orientation of the plane and the induced cyclic order of
the edges around every vertex of $\G$.  We call $\G$, equipped with multiplicity
function $m$ and a fixed cyclic ordering a \emph{Brauer graph}.
The valency $\val (X)$ of a vertex $X \in \G_0$ is the
number of edges incident to $X$, where a loop counts twice.
Given a Brauer graph $\G$, the \emph{Brauer graph algebra} $A_{\G}$ is the
algebra $k Q_{\G} / I_{\G}$ where the quiver $Q_{\G}$ is as descibed below. The
ideal $I_{\G}$ is generated by relations $\rho_{\G}$ also described below.

Firstly, if $\G$ is the graph $\xymatrix{ X \ar@{-}[r] & Y }$ with $m(X)=m(Y)=1$
then $Q_{\G} = \xymatrix{ \bullet \ar@(dr,ur)[]_\alpha }$ and
$\rho_{\G} = \{\alpha^2\}$ so that $ A_\G = K[\alpha]/(\alpha^2)$.

In all other cases, the edges $a$ of $\G$ correspond to the vertices $v_a$ of
$Q_{\G}$, and if $b$ is the successor of $a$ in the cyclic ordering around a vertex $X$ then there is an arrow $v_a \stackrel{\alpha}{\longrightarrow}
v_b$ in $Q_{\G}$.

There are three types of relations.  Let $X\in \G_0$, and let $a \in \G_1$ be
an edge incident with $X$.  Let $a = a_1, a_2, \ldots a_{\val (X) }$ be the edges around
$X$ in the cyclic ordering.  Set $C_{a, X} = \alpha_{\val (X)} \ldots \alpha_1$
to be the corresponding cycle of arrows in $K Q_{ \G }$.  Then the relations
$\rho_{\G}$ are defined as follows:

\noindent \emph{Relations of type I}: If the edge $a \in \G_1$ has endpoints
$X$ and $Y$ so that $a$ is not a leaf at $X$ with $m(X)=1$ or at $Y$ with $m(Y)=1$, then we have the
relation $C_{a, X}^{m (X)} - C_{a, Y}^{m (Y)}$ in $\rho_{\G}$.

\noindent \emph{Relations of type II}: If the edge $a \in \G_1$ with endpoints
$X$ and $Y$ is a leaf at the vertex $Y$ with $m(Y)=1$ then $\alpha_1 C_{a, X}^{m (X)} $ is
a relation in $\rho_{\G}$.

\noindent \emph{Relations of type III}: All paths $\alpha\beta$ of length $2$
where $\alpha \beta$ is not a subpath of any cycle $C_{a, X}$ are relations in
$\rho_{\G}$.

\begin{remark}
\begin{enumerate}[(i)]
\item We do not allow truncated edges in the sense of~\cite{Kauer98} here.
\item In \cite{GSS} it is shown that for any given Brauer graph there is a tower
of coverings of Brauer graphs corresponding to a tower of coverings of Brauer graph algebras such that the topmost covering Brauer graph has no loops, no multiple edges and  multiplicity function identical to 1.
\item We do not consider the quantized case of~\cite{GSS}. The Brauer
  graph algebras that we consider here are all symmetric algebras (Recall that a
  finite dimensional $K$-algebra $A$ is symmetric if there exists an isomorphism
  of $A$-$A$-bimodules between $A$ and $\Hom_K(A,K)$).
\end{enumerate}
\end{remark}

We recall part of a result of Kauer~\cite{Kauer98}.

\begin{theorem}\cite[3.5]{Kauer98}\label{thm:KauerMove}
Consider two Brauer graphs, locally embedded in the plane, which are the same
except for a local move as shown in Figure~\ref{fig:KauerMove}.  Then the
corresponding Brauer graph algebras are derived equivalent.
\end{theorem}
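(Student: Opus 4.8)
The plan is to construct an explicit tilting complex realising the derived equivalence, following the standard Rickard-style strategy. The Kauer move modifies the Brauer graph only in a small neighbourhood: an edge $a$ is ``slid'' so that the cyclic ordering at one of its endpoints changes. Translating this into quiver-and-relations language, the quiver $Q_\G$ changes only in the arrows entering and leaving the vertex $v_a$ and the immediately adjacent vertices. Accordingly, I would take a two-term complex $T = \bigoplus_b T_b$ where $T_b$ is the indecomposable projective $P_b$ concentrated in degree $0$ for every edge $b$ not directly involved in the move, and where the summand $T_a$ corresponding to the moved edge $a$ is a complex $0 \to P_a \xrightarrow{f} P_c \to 0$ (in degrees $-1$ and $0$, say), with $P_c$ the projective at the neighbouring edge $c$ that $a$ is slid past and $f$ given by right multiplication by the connecting path in $KQ_\G/I_\G$. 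This is the shape of complex Kauer uses, and it mirrors the fact that in a flip/Whitehead move the new diagonal replaces the old one while all other edges are untouched.

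The key steps, in order, are: (1) write down the quiver $Q_\G$ and relations $I_\G$ explicitly in the neighbourhood affected by the Kauer move, for both the ``before'' graph $\G$ and the ``after'' graph $\G'$; (2) define the complex $T$ as above and verify that it is a tilting complex over $A_\G$ — i.e.\ that $T$ generates the perfect derived category $K^b(\operatorname{proj} A_\G)$ as a thick subcategory, and that $\Hom_{D^b(A_\G)}(T, T[i]) = 0$ for all $i \neq 0$; (3) compute the endomorphism algebra $\End_{D^b(A_\G)}(T)$ by reading off the graded Hom-spaces between the $T_b$'s from the combinatorics of $\G$, and identify it — via the presentation of Brauer graph algebras recalled above — with $A_{\G'}$; (4) invoke Rickard's theorem to conclude that $A_\G$ and $A_{\G'} \cong \End(T)^{\op}$ (or $\End(T)$, depending on conventions) are derived equivalent. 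Step (2) is largely formal once the self-injectivity (indeed symmetry) of Brauer graph algebras is used: a two-term complex of projectives over a self-injective algebra is automatically a tilting complex as soon as the self-orthogonality and generation conditions hold, and generation follows because the ``missing'' projective $P_a$ sits in the thick subcategory generated by $T_a$ and $P_c$.

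The main obstacle will be step (3): the identification of $\End_{D^b(A_\G)}(T)$ with the Brauer graph algebra $A_{\G'}$ of the moved graph. One must compute all homomorphisms of complexes up to homotopy between the various summands — which amounts to a careful bookkeeping of paths in $Q_\G$ around the vertices $X$ and $Y$ that are the endpoints of $a$, and of how composition of such maps behaves modulo $I_\G$ — and then check that the resulting quiver and relations are precisely $Q_{\G'}$ and $I_{\G'}$ (relations of types I, II and III). The delicate points are the behaviour at special vertices (nontrivial multiplicity $m$) and at leaves, where the relations of type II intervene and the shape of the cycles $C_{a,X}$ changes; these force case distinctions matching exactly the case distinctions in the definition of the Kauer move. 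Since this is precisely the content of~\cite[3.5]{Kauer98}, the cleanest route in this paper is simply to cite that computation; a self-contained proof would reproduce Kauer's tilting complex and the endomorphism ring calculation, with the observation that our ``no truncated edges'' convention only simplifies the bookkeeping.
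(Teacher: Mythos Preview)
Your proposal is correct and matches the paper's approach: the paper does not give its own proof of this theorem but simply cites it as \cite[3.5]{Kauer98}, exactly as you suggest in your final paragraph. Your sketch of the Kauer argument (a two-term Okuyama--Rickard style tilting complex replacing $P_a$ by a complex $P_a\to P_c$ and computing the endomorphism ring) is an accurate outline of the cited result, but the paper itself includes none of these details for Theorem~\ref{thm:KauerMove}; the explicit tilting-complex computation in Section~\ref{sec:m_angulations} is for the \emph{dual} Kauer move on $\MM$, which is a different (though analogous) construction.
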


\begin{figure}[!h]
\psfragscanon \psfrag{a}{\begin{footnotesize}$a$\end{footnotesize}}
\psfrag{b}{\begin{footnotesize}$b$\end{footnotesize}}
\psfrag{c}{\begin{footnotesize}$c$\end{footnotesize}}
\psfrag{W}{\begin{footnotesize}$W$\end{footnotesize}}
\psfrag{X}{\begin{footnotesize}$X$\end{footnotesize}}
\psfrag{Y}{\begin{footnotesize}$Y$\end{footnotesize}}
\psfrag{Z}{\begin{footnotesize}$Z$\end{footnotesize}}
\begin{center}
\includegraphics[width=10cm]{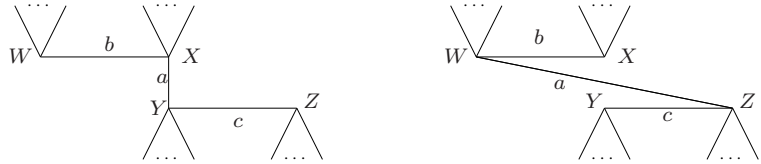} 
\end{center}
\label{fig:KauerMove}
\caption{The Kauer move at the edge $a$.}
\end{figure}
\noindent We call the move appearing in Theorem~\ref{thm:KauerMove} the
\emph{Kauer move} at the edge $a$.

We note that this derived equivalence has later also been
considered in ~\cite{Aihara, Antipov, Dugas} where in some cases it is called
a tilting mutation in analogy with the corresponding mutation in cluster
theory  (See Section 3).


\section{Surfaces and derived equivalences of Brauer graph algebras}
\label{s:surfaces}

The aim of this section is to introduce ribbon graphs and to establish the
fact that a Brauer graph is a ribbon graph with a vertex labelling
corresponding to the values of the multiplicity function of the Brauer graph.
The natural embedding of a ribbon graph into a compact oriented marked surface
gives a natural way to associate a surface to a Brauer graph algebra.
We will also see that, by considering triangulations, we can associate
to every compact oriented marked surface a Brauer graph algebra unique
up to derived equivalence (although we note that this is not the inverse of
the above operation).

First, we recall part of the well-known theory of ribbon graphs and their
embeddings into surfaces, following~\cite{Labourie}.
We will break this section into two parts. In the first part
we consider marked surfaces without boundary and in the second we consider
marked surfaces with boundary.

\subsection{Marked surfaces without boundary}

We begin by giving the definition of a \emph{ribbon graph} or \emph{fat graph}.
An \emph{oriented graph} is a triple $(V,E,\varphi)$, where $V$ is the set of
vertices, $E$ is the set of edges, and $\varphi:E\rightarrow V \times V$ is a
map sending $e\in E$ to the pair $(e^-,e^+)$ of vertices.  We regard $e^-$ as
the start of the arrow $e$ and $e^+$ as the end.  A \emph{graph} is a pair
$(\Gamma,I)$, where $\Gamma=(V,E,\varphi)$ is an oriented graph and
$I:E\rightarrow E$ is a fixed-point-free involution, $e\mapsto \overline{e}$,
satisfying $(\overline{e})^+=e^-$ and $(\overline{e})^-=e^+$ for all $e\in E$.

We call the pair $(e,\overline{e})$ an \emph{undirected edge}. Thus we are
regarding an undirected edge as a pair of directed edges in opposite directions
between the same pair of vertices (which might be equal, in the case of a loop).

The \emph{geometric realisation} $|\Gamma|_I$ of $(\Gamma,I)$ is the topological
space
$|\Gamma|_I=(E\times [0,1])/{\sim},$ where $\sim$ is the equivalence relation
defined by $(e,t)\sim (\overline{e},1-t)$ (for all $t \in [0,1]$), $(e,0)\sim
(f,0)$ if $e^-=f^-$ and $(e,1)\sim (f,1)$ if $e^+=f^+$, for all $e,f\in E$.

A \emph{cyclic} ordering of a finite set $S$ is a bijection $s:S\rightarrow S$
satisfying
$\{s^n(x)\,:\,n\in\mathbb{Z}\}=S,$ for all $x \in S$.  The \emph{predecessor}
of $x\in S$ is $s^{-1}(x)$ and the \emph{successor} of $x$ is $s(x)$.

If $v\in V$, then the \emph{star} of $V$ is
$E_v=\{e\in E\,:\,e^-=v\}.$ A \emph{ribbon graph} (or fat graph) is a graph
together with a cyclic ordering on $s_v$ on $E_v$ for each $v\in V$.  Morphisms
and isomorphisms of ribbon graphs are then defined in the natural way. It is easy
to see that:

\begin{remark}
A ribbon graph is a Brauer graph without exceptional vertices.  In particular,
the way loops work (see Section 1.1 above), ensures that the definitions of
ribbon graphs and Brauer graphs coincide.  Thus a Brauer graph can be regarded
as a ribbon graph with a multiplicity attached to each of its vertices.
\end{remark}

Examples of ribbon graphs include planar graphs and graphs locally embedded in
the plane. An embedding of a graph into an oriented surface induces a
ribbon graph structure on the graph, with cyclic ordering induced from the
embedding around each vertex and the orientation of the surface.

Conversely, every Brauer graph with
multiplicity function identically equal to 1 gives rise to an oriented surface
with boundary:

\begin{lemma} \cite[2.2.4]{Labourie}
Every ribbon graph $\Gamma$ can be embedded in an oriented surface with boundary
in such a way that the cyclic orderings around each of its vertices arise from
the orientation of the surface.
\end{lemma}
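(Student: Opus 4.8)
The plan is to construct the surface-with-boundary directly from the ribbon graph data by the standard ``thickening'' or ``ribbon'' construction, and then verify that the resulting surface carries the required embedding. First I would build a surface $S_\Gamma$ by replacing each vertex $v\in V$ with a disc $D_v$ and each undirected edge $\{e,\overline{e}\}$ with a band (a rectangle $[0,1]\times[0,1]$), gluing the two ends $\{0\}\times[0,1]$ and $\{1\}\times[0,1]$ of the band to arcs on the boundary circles of $D_{e^-}$ and $D_{e^+}$ respectively. The crucial point is that the arcs attached to a given disc $D_v$ are arranged around the boundary circle $\partial D_v$ in the cyclic order prescribed by $s_v$ on $E_v$, and each band is glued without a twist, so that the orientations of the discs (say all counterclockwise) and the bands fit together consistently. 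This produces a compact oriented surface with boundary, into which $|\Gamma|_I$ embeds as a deformation retract: the centre of each disc is a vertex and the core arc $[0,1]\times\{1/2\}$ of each band is an edge.

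Next I would check that the ribbon structure of $\Gamma$ is recovered from the embedding $|\Gamma|_I\hookrightarrow S_\Gamma$ together with the orientation of $S_\Gamma$. Near a vertex $v$, a small neighbourhood in $S_\Gamma$ is the disc $D_v$ with finitely many bands emanating from it; the orientation of $S_\Gamma$ induces a cyclic order on these bands as they appear around $\partial D_v$, and by construction this is exactly $s_v$. Care is needed with loops: an undirected loop at $v$ contributes two distinct attaching arcs to $\partial D_v$ (one for $e$, one for $\overline{e}$), and the convention that a loop ``counts twice'' in the valency and occupies two slots in the cyclic ordering is precisely what makes the band attach correctly and what makes the local picture around $v$ match the combinatorial star $E_v$. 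This is the point where the remark preceding the lemma — that loops in Brauer/ribbon graphs are handled so the two notions coincide — is used.

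The main obstacle, and the step I would treat most carefully, is verifying orientability and the coherence of orientations across the gluing: one must show that choosing all disc orientations compatibly with a single global orientation of the plane (equivalently, embedding each band without a M\"obius-type twist) is actually possible and yields a well-defined orientation on $S_\Gamma$. For a ribbon graph this is automatic because the cyclic orderings are honest cyclic orderings (not merely cyclic orderings up to reversal), so there is a consistent ``clockwise'' convention at every vertex; formally one observes that each band inherits an orientation from either of its two ends and these agree. Once orientability is established, the fact that the cyclic orderings around the vertices arise from the surface orientation is immediate from the construction. I would remark that since we are citing~\cite[2.2.4]{Labourie}, the proof can be kept brief: it suffices to recall the ribbon construction, note that it is oriented precisely because the ordering data at each vertex is genuinely cyclic, and observe that $|\Gamma|_I$ sits inside as a spine inducing the prescribed ribbon structure.
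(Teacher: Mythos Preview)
Your proposal is correct and is essentially the same as what the paper does: the paper states the lemma as a citation to~\cite[2.2.4]{Labourie} and then, in the paragraph immediately following, sketches precisely the ribbon surface construction $S^{\circ}_{\Gamma}$ you describe (discs for vertices, strips for edges, glued according to the cyclic orderings). Your write-up is considerably more detailed than the paper's sketch---in particular your explicit discussion of orientability, of loops occupying two slots in the star, and of the graph sitting as a spine---but the underlying construction is identical.
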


The \emph{ribbon surface} $S^{\circ}_{\Gamma}$ of $\Gamma$ is an example of such
a surface. It can be constructed in the following way.  Firstly, a surface is
associated to each vertex and edge of $\Gamma$ as in
Figure~\ref{fig:localsurface}. Then the structure of the graph determines a
gluing of the corresponding surfaces giving an oriented surface together with an
embedding of $\Gamma$.

\begin{figure}
\begin{center}
\includegraphics[width=6cm]{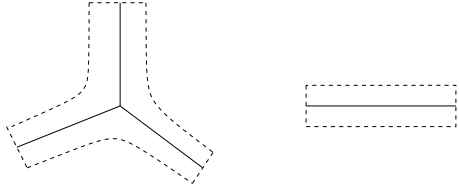}
\end{center}
\caption{Surfaces corresponding to vertices and edges in $\Gamma$.}
\label{fig:localsurface}
\end{figure}

Note that $S^{\circ}_{\Gamma}$ has a number of boundary components. A
\emph{face} of $\Gamma$ is an equivalence class, up to cyclic permutation, of
$n$-tuples $(e_1,e_2,\ldots ,e_n)$ of edges satsifying $e_p^+=e_{p+1}^-$ and
$s_{e_p^+}(\overline{e_p})=e_{p+1}$ for all $p$ with $1\leq p\leq n$ (taking
subscripts modulo $n$). Then the boundary components of $S^{\circ}_\Gamma$
correspond to the faces of $\Gamma$.

An embedding $\iota:\Gamma\rightarrow S$ is said to be a \emph{filling
  embedding} if
$S\setminus |\Gamma|=\bigsqcup_{f\in F} D_f,$ where $|\Gamma|$ denotes the
image of $\iota$, each $D_f$ is a disc, and $F$ is a finite set. That is, the
complement of the embedding is a disjoint union of finitely many discs.

\begin{proposition} (see~\cite[2.2.7]{Labourie}) \label{pro:embeddingexists}
Every ribbon graph has a filling embedding into a compact oriented surface such
that the connected components of $S\setminus |\Gamma|$ are in bijection with the
faces of $\Gamma$ in the above sense.
\end{proposition}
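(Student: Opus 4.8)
The plan is to construct the filling embedding by enlarging the ribbon surface $S^{\circ}_\Gamma$ of Lemma~2.6, capping off each of its boundary components with a disc. Recall from the discussion after Lemma~2.6 that $S^{\circ}_\Gamma$ is a compact oriented surface with boundary, that it carries an embedding $\iota_0:\Gamma\hookrightarrow S^{\circ}_\Gamma$ inducing the given cyclic orderings, and that its boundary components are in bijection with the faces of $\Gamma$ (in the combinatorial sense defined just above the statement). First I would set $S$ to be the surface obtained from $S^{\circ}_\Gamma$ by gluing a closed disc $D_f$ along each boundary circle $\partial_f$ corresponding to a face $f\in F$; since $S^{\circ}_\Gamma$ is compact with finitely many boundary circles (the graph $\Gamma$ being finite), $S$ is a closed compact surface. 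One must check that the orientation of $S^{\circ}_\Gamma$ extends over the glued discs: this is standard, as each disc can be given the orientation compatible with the induced boundary orientation on $\partial_f$, so $S$ is oriented and $\iota:=\iota_0:\Gamma\hookrightarrow S$ is still an embedding inducing the prescribed cyclic orderings.

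Next I would verify the filling condition. By construction $S\setminus|\Gamma| = (S^{\circ}_\Gamma\setminus|\Gamma|)\;\sqcup\;\bigsqcup_{f\in F}(D_f\setminus\partial_f)$ once we observe that $|\Gamma|$ lies in the interior of $S^{\circ}_\Gamma$. The complement $S^{\circ}_\Gamma\setminus|\Gamma|$ deformation retracts onto a disjoint union of the boundary circles $\partial_f$ — this is visible from the local model in Figure~2.1: away from the graph, the ribbon surface is a union of ``collar'' strips along the edges and small discs around the vertices, all of which retract onto the boundary, and the combinatorics of the gluing identifies these collar-neighbourhoods-of-boundary with exactly the faces. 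Hence $S^{\circ}_\Gamma\setminus|\Gamma|$ together with the open discs $D_f\setminus\partial_f$ glues, for each $f$, to an open set homeomorphic to an open disc, and distinct faces give disjoint such discs. Therefore $S\setminus|\Gamma|=\bigsqcup_{f\in F}D_f$ with each $D_f$ an open disc, which is precisely the definition of a filling embedding, and the connected components are tautologically indexed by $F$, the set of faces.

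The main obstacle is the second step: pinning down rigorously that each component of $S\setminus|\Gamma|$ is a \emph{disc} rather than merely a surface with one boundary circle. Capping a boundary circle of an arbitrary surface with a disc and deleting a neighbourhood of the core need not give back a disc, so one genuinely has to use the structure of $S^{\circ}_\Gamma$ near its boundary. The cleanest route is to argue that the closed complement $\overline{S^{\circ}_\Gamma\setminus|\Gamma|}$ is a disjoint union of annuli, one per boundary circle $\partial_f$ — this follows by tracking, through the gluing prescription of Figure~2.1, how the edge-strips and vertex-discs contribute to the region cut off by the combinatorial face-cycle $(e_1,\dots,e_n)$, each contribution being a rectangle or a ``corner'' sector and the condition $s_{e_p^+}(\overline{e_p})=e_{p+1}$ guaranteeing these piece together along the cycle into a single annular band with no branching. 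Capping the outer boundary of such an annulus with a disc yields a disc, completing the argument. I would either give this gluing analysis in detail or, since Proposition~2.8 is quoted as ``see~\cite[2.2.7]{Labourie}'', simply refer to Labourie for the topological input and restrict the exposition to the orientation-extension and bijection-with-faces bookkeeping, which is all that is needed downstream.
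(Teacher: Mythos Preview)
Your proposal is correct and follows exactly the approach indicated in the paper: the paper's proof consists of the single sentence ``This is proved by gluing discs onto the ribbon surface of the ribbon graph to fill in the boundary components,'' deferring the topological details to~\cite[2.2.7]{Labourie}. You have supplied precisely those details---the orientation extension, the annular-collar analysis showing each complementary region is a disc, and the bijection with faces---so your write-up is a fleshed-out version of the same argument rather than a different one.
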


This is proved by gluing discs onto the ribbon surface of the ribbon graph to
fill in the boundary components. Such an embedding has the following uniqueness
property:

\begin{proposition} (see~\cite[2.2.10]{Labourie}) \label{pro:fillinguniqueness}
Let $\Gamma\rightarrow S$, $\Gamma'\rightarrow S'$ be filling embeddings of
ribbon graphs of compact oriented surfaces $S,S'$ and $\varphi:\Gamma\rightarrow
\Gamma'$ an isomorphism of ribbon graphs.  Then $\varphi$ induces an
orientation-preserving homeomorphism $\varphi:|\Gamma|\rightarrow |\Gamma'|$
extending to a homeomorphism from $S$ to $S'$.
\end{proposition}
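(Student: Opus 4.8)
The plan is to build the homeomorphism $S\to S'$ in three stages --- first on the graph $|\Gamma|$ itself, then on a regular neighbourhood of it which we identify with the ribbon surface $S^{\circ}_{\Gamma}$, and finally over the complementary discs supplied by the filling hypothesis.

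First I would note that the geometric realisation is functorial in the underlying graph: an isomorphism $\varphi\colon\Gamma\to\Gamma'$ of (oriented) graphs sends the cell $(e,t)$ to $(\varphi(e),t)$ and respects all the defining identifications $(e,t)\sim(\overline e,1-t)$, $(e,0)\sim(f,0)$ when $e^-=f^-$, and $(e,1)\sim(f,1)$ when $e^+=f^+$, so it induces a homeomorphism $|\varphi|\colon|\Gamma|\to|\Gamma'|$. That $\varphi$ additionally preserves the cyclic orderings $s_v$ on each star $E_v$ is exactly the extra information needed for $|\varphi|$ to be compatible with thickening, i.e.\ to extend to an \emph{orientation-preserving} map of fattenings.

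Next I would promote $|\varphi|$ to the ribbon surfaces. Recall that $S^{\circ}_{\Gamma}$ is assembled from the local pieces of Figure~\ref{fig:localsurface} --- an oriented disc for each vertex, an oriented band for each undirected edge --- glued according to the combinatorial data of $\Gamma$, the cyclic order $s_v$ prescribing the order in which the bands are attached around each vertex disc. Since $\varphi$ matches vertices to vertices and edges to edges and respects the $s_v$, it determines an orientation-preserving homeomorphism of each local piece with the corresponding local piece for $\Gamma'$, these agree on overlaps, and gluing gives an orientation-preserving homeomorphism $\Phi\colon S^{\circ}_{\Gamma}\to S^{\circ}_{\Gamma'}$ restricting to $|\varphi|$ on $|\Gamma|$. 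As the boundary components of $S^{\circ}_{\Gamma}$ correspond to the faces of $\Gamma$ (as recalled before Proposition~\ref{pro:embeddingexists}) and $\varphi$ carries faces of $\Gamma$ to faces of $\Gamma'$, $\Phi$ takes the boundary circles of $S^{\circ}_{\Gamma}$ bijectively to those of $S^{\circ}_{\Gamma'}$. Finally I would use the filling hypothesis to recover $S$ and $S'$: a regular neighbourhood $N$ of $|\Gamma|$ in $S$ is a compact oriented subsurface, identified orientation-preservingly and compatibly with the ribbon structure with $S^{\circ}_{\Gamma}$ (here one uses that the ribbon structure on $\Gamma$ is the one induced by the embedding into the oriented $S$), and $S\setminus\mathrm{int}\,N$ is a disjoint union of closed discs, one per face $f$, got from $D_f$ by adding a collar. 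Thus $S$ is $S^{\circ}_{\Gamma}$ with a disc capped onto each boundary circle, and likewise for $S'$; extending $\Phi$ over these caps by the Alexander trick (coning off the boundary-circle homeomorphism) and patching gives the required orientation-preserving homeomorphism $S\to S'$ agreeing with $\varphi$ on $|\Gamma|$.

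The main obstacle is the identification of a regular neighbourhood of $|\Gamma|$ inside the given surface $S$ with the abstractly built ribbon surface $S^{\circ}_{\Gamma}$, orientation-preservingly and respecting the cyclic orderings: one must argue that the combinatorial thickening is genuinely the thickening seen inside $S$. This is where the hypothesis that the ribbon structure is \emph{induced by} the embedding is essential, together with uniqueness of regular neighbourhoods of graphs in surfaces. Once that identification is secured, the disc-capping step is routine, since any two ways of gluing a disc onto a circle, and any extension of a circle homeomorphism over discs, agree up to homeomorphism.
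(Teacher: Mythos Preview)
The paper does not give its own proof of this proposition; it is quoted from \cite[2.2.10]{Labourie} without argument. Your three-stage approach---realise $|\varphi|$ on the graph via functoriality, thicken to an orientation-preserving homeomorphism $S^{\circ}_{\Gamma}\to S^{\circ}_{\Gamma'}$ using that $\varphi$ respects the cyclic orders, identify a regular neighbourhood of $|\Gamma|$ in $S$ with $S^{\circ}_{\Gamma}$, and cap the complementary discs by the Alexander trick---is correct and is essentially the standard argument one finds in the cited source. You have also correctly isolated the only genuinely delicate point, namely the identification of the abstractly constructed ribbon surface with a regular neighbourhood of $|\Gamma|$ inside $S$; this is exactly where the hypothesis that the ribbon structure on $\Gamma$ is the one induced by the embedding into the oriented surface $S$ is used, together with uniqueness of regular neighbourhoods of a graph in a surface.
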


\begin{corollary} \label{c:fillingembedding}
If $\Gamma$ is a ribbon graph, then there is a compact oriented surface
$S_{\Gamma}$ together with a filling embedding $\Gamma\rightarrow S_{\Gamma}$,
unique up to homeomorphism.
\end{corollary}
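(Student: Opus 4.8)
The plan is to assemble Corollary~\ref{c:fillingembedding} directly from the two preceding propositions, treating it as a packaging statement rather than a substantive new result. First I would invoke Proposition~\ref{pro:embeddingexists} applied to the given ribbon graph $\Gamma$: this produces a compact oriented surface, which I name $S_{\Gamma}$, together with a filling embedding $\iota\colon\Gamma\rightarrow S_{\Gamma}$ whose complementary components are in bijection with the faces of $\Gamma$. This establishes existence, so the only remaining content is the uniqueness clause.

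For uniqueness, suppose $\iota\colon\Gamma\rightarrow S$ and $\iota'\colon\Gamma\rightarrow S'$ are two filling embeddings of the \emph{same} ribbon graph $\Gamma$ into compact oriented surfaces. I would apply Proposition~\ref{pro:fillinguniqueness} with the isomorphism $\varphi\colon\Gamma\rightarrow\Gamma$ taken to be the identity. The proposition then yields an orientation-preserving homeomorphism $|\Gamma|\rightarrow|\Gamma|$ (compatible with the identity on $\Gamma$) that extends to a homeomorphism $S\rightarrow S'$. Hence any two such surfaces are homeomorphic, and in fact by a homeomorphism compatible with the embeddings of $\Gamma$; this is exactly the asserted uniqueness up to homeomorphism. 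I would remark that the argument shows slightly more—namely that the homeomorphism can be chosen to restrict to the identity on $\Gamma$ and to respect orientation—but the bare statement only requires existence of some homeomorphism.

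There is essentially no obstacle here: both halves are immediate citations of results already recorded in the excerpt, and the corollary is the standard way of combining an existence theorem with a uniqueness theorem into a single ``there exists a unique object'' statement. The only point requiring a word of care is the legitimacy of feeding the identity isomorphism into Proposition~\ref{pro:fillinguniqueness}, and then noting that ``unique up to homeomorphism'' in the conclusion is understood in the category of pairs $(S,\iota)$, so that the homeomorphism is required to be compatible with the two copies of $\Gamma$ sitting inside $S$ and $S'$. I would state this explicitly to avoid any ambiguity, and then the proof is complete in a few lines.
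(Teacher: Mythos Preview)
Your proposal is correct and matches the paper's approach exactly: the corollary is stated without proof as an immediate consequence of Propositions~\ref{pro:embeddingexists} (existence) and~\ref{pro:fillinguniqueness} (uniqueness, applied with $\varphi=\id$). There is nothing to add.
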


Thus we see that there is a filling embedding of an arbitrary Brauer graph
(without, or ignoring, multiplicity) into an oriented surface in such a way
that the cyclic ordering around each vertex arises from the orientation of
the surface. In Section~\ref{sec:cluster}, we shall compare Kauer moves in
this context with a certain kind of twist (or mutation) arising in cluster
theory.

\begin{remark}
Note that~\cite[\S3]{Antipov} has also associated a surface to a Brauer graph
by associating a CW-complex, called the Brauer complex, to the quiver with
relations of the corresponding graph algebra. Comparing the definitions,
it can be seen that this gives rise to the same filling embedding as
above (Corollary~\ref{c:fillingembedding}). In particular, the $G$-cycles
in~\cite[\S2]{Antipov06} correspond to the faces of the Brauer graph as a
ribbon graph.
\end{remark}

Finally, we have the following:

\begin{proposition} (stated as~\cite[2.2.12]{Labourie})
\label{pro:admitsfilling}
Every compact oriented surface admits a filling ribbon graph.
\end{proposition}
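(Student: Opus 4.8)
The plan is to reduce the statement to a standard combinatorial model of $S$ and then read off a ribbon graph from it. First I would invoke the classical fact that a compact surface is triangulable (Rad\'o's theorem) to fix a finite triangulation $T$ of $S$, and take $\Gamma$ to be the $1$-skeleton $T^{(1)}$, regarded as a graph embedded in $S$ in the evident way. As recalled earlier in this section, an embedding of a graph into an oriented surface induces a ribbon graph structure: around each vertex $v$ of $\Gamma$ the edge-ends at $v$ inherit a cyclic ordering from the orientation of $S$. So $\Gamma$ becomes a ribbon graph, and $\Gamma\hookrightarrow S$ is an embedding of ribbon graphs in the required sense.

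Next I would check that this embedding is filling. By construction $S\setminus|\Gamma|$ is exactly the disjoint union of the open $2$-simplices of $T$; in a genuine triangulation each closed $2$-simplex is embedded, so each of these pieces is homeomorphic to an open disc, and since $T$ is finite there are only finitely many of them. Hence $\Gamma\hookrightarrow S$ is a filling embedding, and by Proposition~\ref{pro:fillinguniqueness} (or Corollary~\ref{c:fillingembedding}) the faces of $\Gamma$ as a ribbon graph are in bijection with these discs; in particular $S_\Gamma\cong S$. This shows $S$ admits a filling ribbon graph.

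The only non-elementary ingredient is the triangulability of a compact surface (equivalently, the existence of a finite CW structure on $S$ whose $2$-cells are discs attached along $T^{(1)}$); everything else is a routine verification, so I expect no genuine obstacle beyond citing this classical result. If one wishes to avoid it, one can instead argue via the classification of compact oriented surfaces: for $S$ of genus $g$ take $\Gamma$ to be the wedge of $2g$ circles with the cyclic ordering at its unique vertex read off from the edge word $a_1b_1a_1^{-1}b_1^{-1}\cdots a_gb_ga_g^{-1}b_g^{-1}$ of the standard $4g$-gon (and a single edge when $g=0$). A direct count gives $\chi(S_\Gamma)=1-2g+1=2-2g$, and $S_\Gamma$ is orientable, so $S_\Gamma\cong S$; the polygon identification exhibits the filling embedding explicitly, its single complementary disc being the interior of the $4g$-gon.
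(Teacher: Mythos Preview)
Your proof is correct and follows essentially the same approach as the paper: the paper does not give a formal proof but simply remarks that a filling ribbon graph can be realised via an (ideal) triangulation of the surface, which is precisely your main argument carried out in detail. Your alternative construction via the classification of surfaces and the standard $4g$-gon is a pleasant extra, though not needed here.
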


One way to realise a filling ribbon graph as in Proposition
\ref{pro:admitsfilling} is through an (ideal) triangulation of the surface.

We now suppose that $(S,M)$ is a compact oriented surface with marked points
$M$ in $S$. We assume in addition that $(S,M)$ is not a sphere with $1$ or
$2$ marked points and that each connected component of $S$ contains at least
one element of $M$ to ensure that $(S,M)$ has at least two triangulations
(see~\cite[\S2]{FominShapiroThurston08}: note that we allow a sphere
with $3$ marked points here).

Let $\T$ be a triangulation of $(S,M)$ where the set of marked points $M$
coincides with the set of vertices of the triangulation.  Then we obtain a
filling ribbon graph $\Gamma_{\T}$ whose vertices are the elements of $M$ and
whose edges are the arcs in $\T$.  Denote by $A_{\T}$ the corresponding
Brauer graph algebra.

We recall the definition of the \emph{flip} of a triangulation
from~\cite[\S2.2]{Burman99}.  Let $a$ be an arc in $\T$ incident with two
distinct triangles $T_1$ and $T_2$ in $\T$ (we call such an arc \emph{flippable}).
Then there is a map $\psi$ from a
square with diagonal $d$ onto the union of $T_1$ and $T_2$ in $S$. The map
$\psi$ may fold the square, possibly identifying distinct points or edges. Then
in the flip of $\T$ at $a$, $a$ is replaced with $\psi(d')=a'$, where $d'$ is
the diagonal of the square distinct from $d$.  See
Figure~\ref{fig:twotriangles}.

\begin{figure}
\psfragscanon \psfrag{W}{$W$} \psfrag{X}{$X$} \psfrag{Y}{$Y$} \psfrag{Z}{$Z$}
\psfrag{D}{$d$} \psfrag{D'}{$d'$} \psfrag{phi}{$\phi$} \psfrag{S}{$S$}
\begin{center}
\includegraphics[width=5cm]{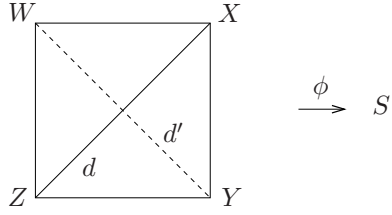}
\end{center}
\caption{The flip of a triangulation.}
\label{fig:twotriangles}
\end{figure}

We recall that, by~\cite[\S2.1]{Burman99}, the possible triangles that can
appear in $\T$ are as in Figure~\ref{fig:possibletriangles}.

\begin{figure}
\psfragscanon \psfrag{A1}{$A_1$} \psfrag{A2}{$A_2$} \psfrag{A3}{$A_3$}
\psfrag{A4}{$A_4$}
\begin{center}
\includegraphics[width=10cm]{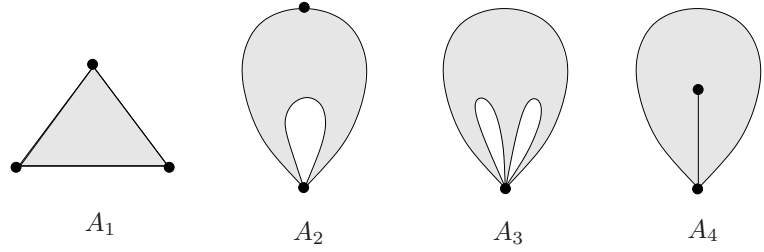}
\end{center}
\caption{The possible triangles in $\T$.}
\label{fig:possibletriangles}
\end{figure}

\begin{lemma} \label{l:successors} With the notation in Figure~\ref{fig:twotriangles}, the following successor relations hold
\begin{enumerate}
\item[(a)] The clockwise successor of $a=\psi(d)$ at $\psi(X)$ is $\psi(WX)$.
\item[(b)] The clockwise successor of $a=\psi(d)$ at $\psi(Z)$ is $\psi(ZY)$.
\end{enumerate}
\end{lemma}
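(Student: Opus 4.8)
The plan is to work directly from the combinatorial definitions of ribbon graph and flip, reading off the cyclic orderings from Figure~\ref{fig:twotriangles}. The key point is that $\psi$ is an orientation-preserving map from the square (with its standard counterclockwise boundary orientation, vertices labelled $W,X,Y,Z$ so that $WX$, $XY$, $YZ$, $ZW$ are the sides and $d$ the diagonal from, say, $X$ to $Z$) onto $T_1\cup T_2\subseteq S$, compatible with the orientation of $S$; hence it takes clockwise cyclic orderings of edges around a vertex of the square to clockwise cyclic orderings in $\Gamma_{\T}$. So it suffices to check (a) and (b) inside the square and then push forward by $\psi$.

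First I would fix the picture: after the flip, $a=\psi(d)$ is the arc corresponding to the diagonal $d$ joining $X$ and $Z$, lying in the triangle $T_1 = \psi(WXZ)$ (say) on one side and $T_2 = \psi(XYZ)$ on the other. At the vertex $X$, the three relevant edges meeting $X$ that come from $T_1\cup T_2$ are $WX$, $d$, and $XY$, appearing in that cyclic (clockwise) order around $X$ as one reads around the square — concretely, going clockwise from $d$ at $X$ one next meets the side $WX$ of the triangle $T_1=\psi(WXZ)$. Therefore the clockwise successor of $d$ at $X$ is $WX$, which is exactly statement (a) after applying $\psi$. Symmetrically, at the vertex $Z$ the edges from $T_1\cup T_2$ are $ZW$, $d$, and $YZ$, and going clockwise from $d$ at $Z$ the next edge is $YZ$; applying $\psi$ gives statement (b). The only subtlety is that $\psi$ may fold the square, identifying some sides or vertices; but the successor relation is a purely local statement at the image vertex, and a local orientation-preserving identification cannot change which edge is the immediate clockwise successor, so the folding does not affect the conclusion. (One should note that $WX$ and $ZY$ denote the images $\psi(WX)$ and $\psi(ZY)$; if the folding identifies some of these arcs with others the statements still hold verbatim.)

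I would then phrase the argument as: by Proposition~\ref{pro:fillinguniqueness} (or directly from the construction of the ribbon graph structure from an embedding, as recalled just after the Remark identifying ribbon graphs with exceptional-vertex-free Brauer graphs), the cyclic ordering around $\psi(X)$ in $\Gamma_{\T'}$ is induced by the orientation of $S$; restricting to the neighbourhood $\psi(T_1\cup T_2)$ of $\psi(X)$ and pulling back along the orientation-preserving map $\psi$ reduces the computation to the cyclic order of $\{WX, d, XY\}$ around $X$ in the oriented square, which is immediate from the figure. The same argument at $\psi(Z)$ gives (b).

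The main obstacle — really the only one — is bookkeeping with the orientation convention: one must be careful that "clockwise" as used throughout the paper (the fixed clockwise orientation of the plane, Section~\ref{section:notation}) matches the orientation of $S$ transported through $\psi$, and that the labelling of the square in Figure~\ref{fig:twotriangles} is consistent with the one in Figure~\ref{fig:possibletriangles}; getting a sign/orientation wrong here would swap (a) and (b) or replace a successor by a predecessor. Once the convention is pinned down the proof is a one-line appeal to $\psi$ being orientation-preserving plus inspection of the figure, so I would keep it short.
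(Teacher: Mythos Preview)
Your argument is correct and rests on the same principle the paper states in the first sentence of its own proof: $\psi$ only folds and identifies, so it does not change successor relations. The difference is that the paper then verifies this explicitly, case by case, for the four possible triangle types $A_1$--$A_4$ of Figure~\ref{fig:possibletriangles}, with figures tracking how the successor arrows $\alpha,\beta,\gamma$ behave under each folding; you instead assert the general principle and stop. Your abstract version is tidier, but the paper's case analysis earns its keep in case $A_4$ (the self-folded triangle), where $\psi(X)=\psi(Z)$ and $a=\psi(d)$ becomes a loop, so that statements (a) and (b) concern the two distinct half-edges of that loop at a single vertex and one must check that no identified edge slips between them --- your sentence ``a local orientation-preserving identification cannot change which edge is the immediate clockwise successor'' is exactly right, but it is precisely what the paper's pictures are establishing, and a referee might ask you to spell out that case. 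Two minor slips: the lemma concerns the original triangulation $\T$, not the flipped one, so drop ``after the flip'' and the reference to $\Gamma_{\T'}$; and Proposition~\ref{pro:fillinguniqueness} plays no role here --- the cyclic ordering comes straight from the embedding.
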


\begin{proof}
This holds because $\psi$ only acts by folding and identifying, so doesn't
change successor relations. We describe this in detail.

Consider the triangle $WXZ$, which is folded by $\psi$ according to $A_1$, $A_2$,
$A_3$ or $A_4$ in Figure~\ref{fig:possibletriangles}. The argument for the
triangle $XYZ$ is similar. We just need to check that $\psi$ preserves the
clockwise successors at each vertex.

\emph{Case $A_1$}: This case is trivial.

\emph{Case $A_2$}: This is illustrated in Figure~\ref{fig:caseA2}.  We indicate
the successor relation by curved arrows, labelled $\alpha, \beta$ and $\gamma$
to indicate the correspondence under $\psi$.  Note that there are three
possibilities (depending on which edge becomes the inside edge in the folding),
but the argument in each case is similar.

\begin{figure}
\psfragscanon \psfrag{W}{$\scriptstyle W$} \psfrag{X}{$\scriptstyle X$}
\psfrag{Z}{$\scriptstyle Z$} \psfrag{psi}{$\scriptstyle \psi$}
\psfrag{psi(WX)}{$\scriptstyle \psi(WX)$} \psfrag{psi(XZ)}{$\scriptstyle
  \psi(XZ)$} \psfrag{psi(ZW)}{$\scriptstyle \psi(ZW)$}
\psfrag{psi(X)}{$\scriptstyle \psi(X)$} \psfrag{psi(W)=psi(Z)}{$\scriptstyle
  \psi(W)=\psi(Z)$} \psfrag{1}{$\scriptstyle \alpha$} \psfrag{2}{$\scriptstyle
  \beta$} \psfrag{3}{$\scriptstyle \gamma$}
\begin{center}
\includegraphics[width=10cm]{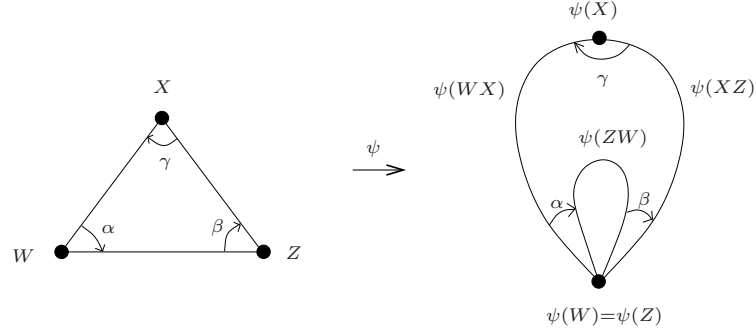}
\end{center}
\caption{Proof of Lemma~\ref{l:successors}, case $A_2$.}
\label{fig:caseA2}
\end{figure}

\emph{Case $A_3$}: This is indicated in Figure~\ref{fig:caseA3}.  We represent
the folding in two steps, first folding as in case $A_2$ (labelled $\psi_0$),
then a further folding, labelled $\psi_1$, to obtain $\psi=\psi_1\circ \psi_0$.

\begin{figure}
\psfragscanon \psfrag{W}{$\scriptstyle W$} \psfrag{X}{$\scriptstyle X$}
\psfrag{Z}{$\scriptstyle Z$} \psfrag{psi0}{$\scriptstyle \psi_0$}
\psfrag{psi1}{$\scriptstyle \psi_1$} \psfrag{psi0(WX)}{$\scriptstyle
  \psi_0(WX)$} \psfrag{psi0(XZ)}{$\scriptstyle \psi_0(XZ)$}
\psfrag{psi0(ZW)}{$\scriptstyle \psi_0(ZW)$} \psfrag{psi0(Z)}{$\scriptstyle
  \psi_0(Z)$} \psfrag{psi0(X)=psi0(W)}{$\scriptstyle \psi_0(X)=\psi_0(W)$}
\psfrag{psi0(X)}{$\scriptstyle \psi_0(X)$}
\psfrag{psi0(W)=psi0(Z)}{$\scriptstyle \psi_0(W)=\psi_0(Z)$}
\psfrag{psi(WX)}{$\scriptstyle \psi(WX)$} \psfrag{psi(XZ)}{$\scriptstyle
  \psi(XZ)$} \psfrag{psi(ZW)}{$\scriptstyle \psi(ZW)$}
\psfrag{psi(X)}{$\scriptstyle \psi(X)$}
\psfrag{psi(W)=psi(X)=psi(Z)}{$\scriptstyle \psi(W)=\psi(X)=\psi(Z)$}
\psfrag{psi(X)=psi(W)=psi(Z)}{$\scriptstyle \psi(X)=\psi(W)=\psi(Z)$}
\psfrag{1}{$\scriptstyle \alpha$} \psfrag{2}{$\scriptstyle \beta$}
\psfrag{3}{$\scriptstyle \gamma$}
\begin{center}
\includegraphics[width=15cm]{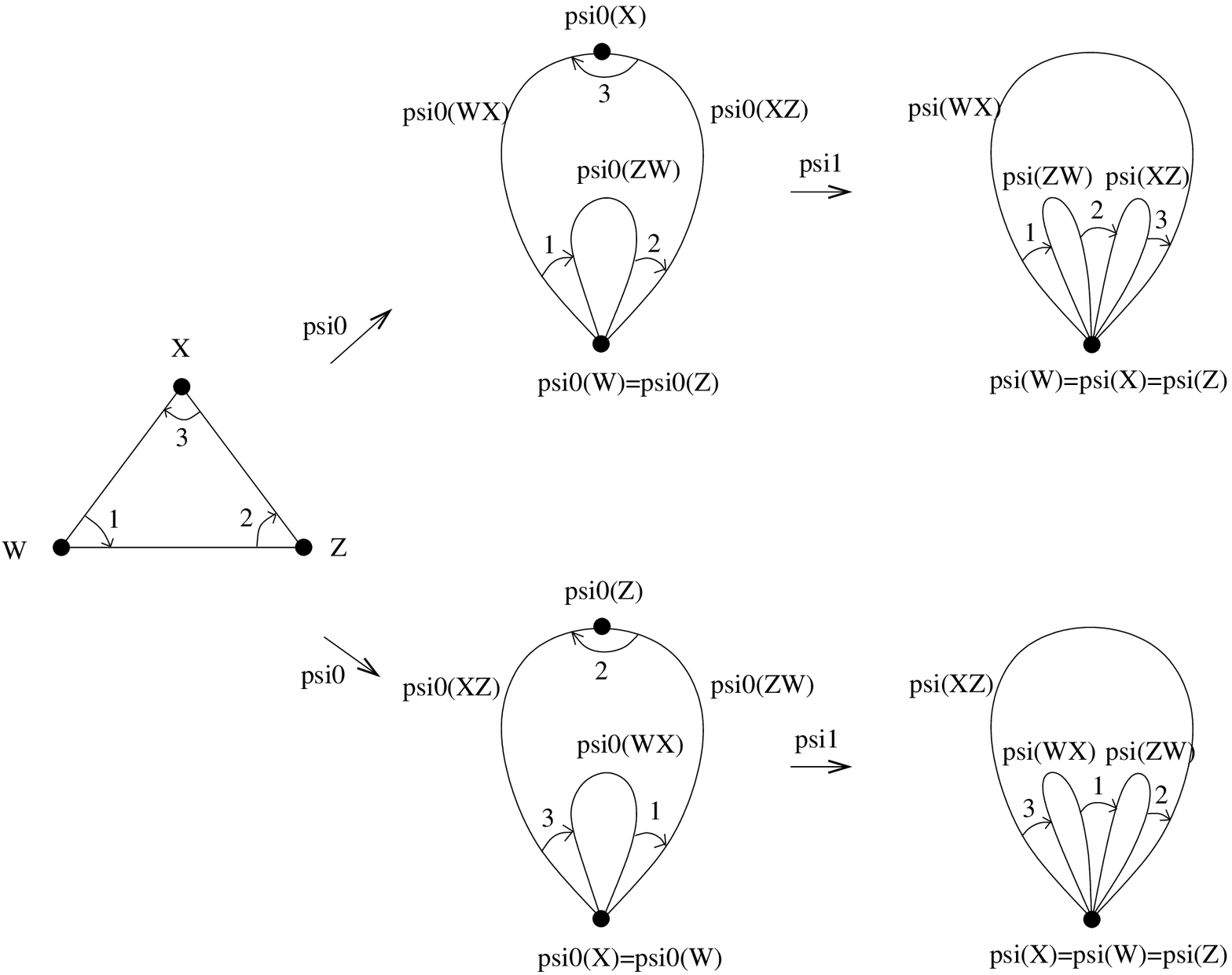}
\end{center}
\caption{Proof of Lemma~\ref{l:successors}, case $A_3$.}
\label{fig:caseA3}
\end{figure}

\emph{Case $A_4$}: This is indicated in Figure~\ref{fig:caseA4}.  Note that, in
this case, $\psi$ must map $XZ$ to the outer edge of the triangle of type $A_4$,
as $\psi(XZ)$ must be a flippable edge in $\T$.

\begin{figure}
\psfragscanon \psfrag{W}{$\scriptstyle W$} \psfrag{X}{$\scriptstyle X$}
\psfrag{Z}{$\scriptstyle Z$} \psfrag{psi}{$\scriptstyle \psi$}
\psfrag{psi(XZ)}{$\scriptstyle \psi(XZ)$} \psfrag{psi(WX)=psi(ZW)}{$\scriptstyle
  \psi(WX)=\psi(ZW)$} \psfrag{psi(X)=psi(Z)}{$\scriptstyle \psi(X)=\psi(Z)$}
\psfrag{psi(Y)}{$\scriptstyle \psi(Y)$} \psfrag{psi(W)}{$\scriptstyle \psi(W)$}
\psfrag{1}{$\scriptstyle \alpha$} \psfrag{2}{$\scriptstyle \beta$}
\psfrag{3}{$\scriptstyle \gamma$}
\begin{center}
\includegraphics[width=10cm]{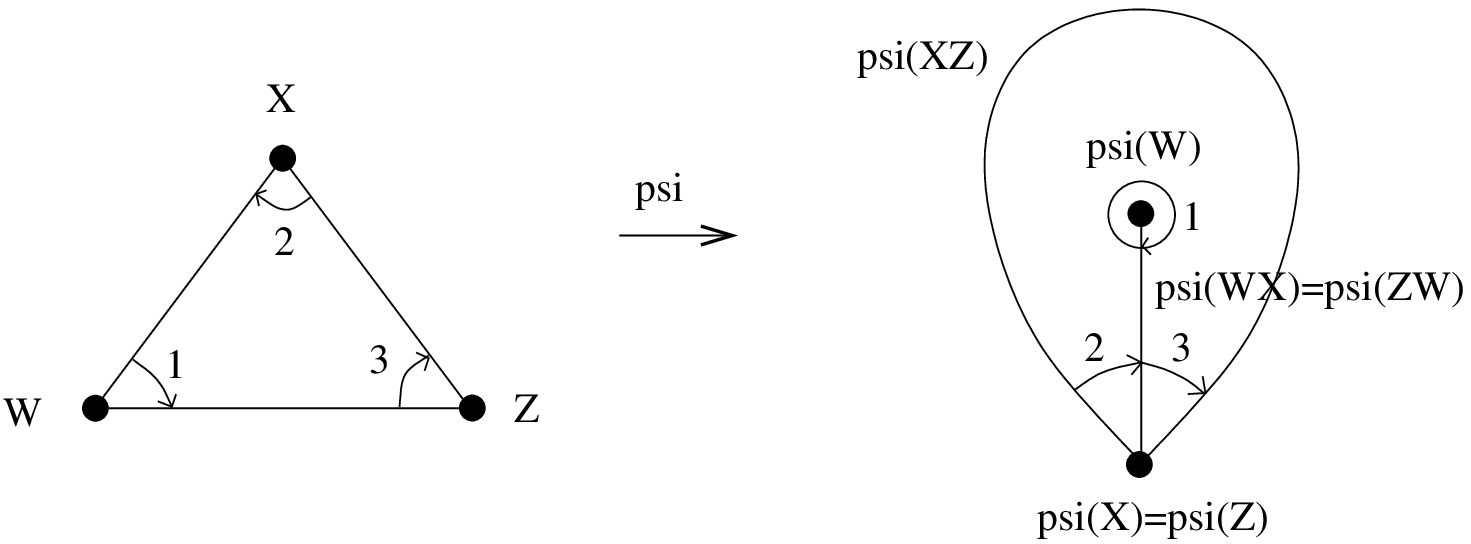}
\end{center}
\caption{Proof of Lemma~\ref{l:successors}, case $A_4$.}
\label{fig:caseA4}
\end{figure}
\end{proof}

\begin{proposition} \label{pro:Kauerflip}
Let $\T$ be a triangulation of a marked compact oriented surface $(S,M)$.
Regarding $\T$ as a Brauer graph, the flip of $\T$ at an arc $a$ coincides with
applying the Kauer move to $\T$ at $a$.
\end{proposition}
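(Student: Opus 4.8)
The plan is to realise the flip purely as a local change in the cyclic orderings of the ribbon graph $\Gamma_{\T}$ near the arc $a$, and to match this with the combinatorial description of the Kauer move in Figure~\ref{fig:KauerMove}. Recall from Figure~\ref{fig:twotriangles} that $a=\psi(d)$, where $\psi$ maps a square with vertices $W,X,Y,Z$ in cyclic order onto the union $T_1\cup T_2$ of the two triangles incident with $a$; since $a$ is incident with $\psi(X)$ and $\psi(Z)$, the diagonal $d$ has endpoints $X,Z$, so $d=XZ$, the other diagonal is $d'=WY$, and the sides of the square are $WX,XY,YZ,ZW$. The flip of $\T$ at $a$ replaces $a$ by $a'=\psi(d')$ and leaves every other arc, and every other incidence, unchanged, so it suffices to record, at the vertices $\psi(W),\psi(X),\psi(Y),\psi(Z)$, precisely where $a$ is deleted from the cyclic order and where $a'$ is inserted.

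First I would read off the ``before'' configuration directly from Lemma~\ref{l:successors}: the clockwise successor of $a$ at $\psi(X)$ is $b:=\psi(WX)$, and the clockwise successor of $a$ at $\psi(Z)$ is $c:=\psi(ZY)$, the remaining endpoints of $b$ and $c$ being $\psi(W)$ and $\psi(Y)$ respectively. Thus the arc $a$ together with its two clockwise successors and the four surrounding vertices form exactly the left-hand side of Figure~\ref{fig:KauerMove}, under the dictionary $W\mapsto\psi(W)$, $X\mapsto\psi(X)$, $Y\mapsto\psi(Y)$, $Z\mapsto\psi(Z)$, $b\mapsto\psi(WX)$, $c\mapsto\psi(ZY)$.

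Next I would read off the ``after'' configuration by working in the square, where no folding occurs, and then transporting along $\psi$. In the square the flip deletes $d$ from the cyclic orders at $X$ and $Z$ and inserts $d'$ so that $d'$ is the clockwise successor of $WX$ at $W$ and of $ZY$ at $Y$ (the clockwise orders there becoming $WX,\,d',\,ZW$ and $ZY,\,d',\,XY$). Exactly as in the proof of Lemma~\ref{l:successors}, $\psi$ acts only by folding and identifying and therefore preserves all clockwise-successor relations around each vertex; applying this, $a'=\psi(d')$ is the arc joining $\psi(W)$ to $\psi(Y)$ that is the clockwise successor of $b$ at $\psi(W)$ and of $c$ at $\psi(Y)$, while $a$ no longer appears at $\psi(X)$ or $\psi(Z)$. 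Under the same dictionary this is exactly the right-hand side of Figure~\ref{fig:KauerMove}. Hence the flip of $\T$ at $a$ and the Kauer move at $a$ produce the same Brauer graph, and by Theorem~\ref{thm:KauerMove} the associated Brauer graph algebras are derived equivalent.

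The delicate point, and essentially the only real work, is that $\psi$ may fold the square and so identify some of the vertices $W,X,Y,Z$ or some of the sides $WX,XY,YZ,ZW$; then $T_1\cup T_2$ is not embedded in $S$ and the local picture there is a quotient of the left-hand side of Figure~\ref{fig:KauerMove} rather than a copy of it. Here one falls back on the case analysis of Lemma~\ref{l:successors} --- cases $A_1,\ldots,A_4$ of Figure~\ref{fig:possibletriangles} together with their mirror images --- to check that in every case the union of the angular sectors at $W,X,Y,Z$ spanned by the square still maps to a neighbourhood in $S$ which embeds in the plane, that the successor relations computed in the square descend to it unchanged, and hence that the hypotheses of Theorem~\ref{thm:KauerMove} are met. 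The possible coincidences among $\psi(W),\psi(X),\psi(Y),\psi(Z)$ are then harmless, since the merged cyclic orders are in each case precisely those predicted by the square model, and the identification of the two moves goes through verbatim.
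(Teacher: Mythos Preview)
Your proposal is correct and follows essentially the same approach as the paper: invoke Lemma~\ref{l:successors} to identify the clockwise successors of $a$ at its two endpoints, and match this local configuration against the definition of the Kauer move in Figure~\ref{fig:KauerMove}. The paper's own proof is a single sentence to this effect; your version expands the details, in particular making explicit the ``after'' configuration (by applying the folding-preserves-successors argument of Lemma~\ref{l:successors} to $d'$ rather than $d$) and the case analysis for folded triangles, but there is no genuine difference in strategy.
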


\begin{proof}
This follows from Lemma~\ref{l:successors} and the definition of a Kauer move
(see Theorem~\ref{thm:KauerMove}).
\end{proof}

It is a consequence of Proposition~\ref{pro:Kauerflip} that given a marked surface $(S,M)$, up to derived equivalence, there exists a unique Brauer graph algebra associated to $(S,M)$:

\begin{corollary} \label{cor:derivedinvariant}
Let $(S,M)$ be a marked compact oriented surface. Then the Brauer graph algebra
$A_{\T}$ associated to $(S,M)$ via a triangulation $\T$ does not depend on the
choice of $\T$, up to derived equivalence.
\end{corollary}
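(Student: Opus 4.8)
The plan is to deduce this from Proposition~\ref{pro:Kauerflip} together with Kauer's derived equivalence (Theorem~\ref{thm:KauerMove}) and the connectedness of the flip graph of triangulations of $(S,M)$. First I would recall that, by the standing hypotheses on $(S,M)$ (not a sphere with one or two marked points, every component meeting $M$), the surface admits at least two triangulations and, more importantly, any two triangulations of $(S,M)$ are connected by a finite sequence of flips at flippable arcs; this is the classical result of \cite[\S2]{FominShapiroThurston08} (going back to Harer, Hatcher, Mosher, and, in the form used here, \cite{Burman99}). So it suffices to show that if $\T$ and $\T'$ differ by a single flip at a flippable arc $a$, then $A_{\T}$ and $A_{\T'}$ are derived equivalent.

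Next I would invoke Proposition~\ref{pro:Kauerflip}: regarding $\T$ as a Brauer graph (with multiplicity function identically $1$, and the cyclic ordering at each vertex induced by the orientation of $S$), the flip of $\T$ at the arc $a$ produces a Brauer graph that is obtained from $\T$ by the Kauer move at $a$. Hence $\Gamma_{\T'}$ is, as a Brauer graph, the result of applying the Kauer move to $\Gamma_{\T}$ at the edge corresponding to $a$. By Theorem~\ref{thm:KauerMove} the two Brauer graph algebras $A_{\T}=A_{\Gamma_{\T}}$ and $A_{\T'}=A_{\Gamma_{\T'}}$ are therefore derived equivalent. Composing these derived equivalences along a sequence of flips connecting two arbitrary triangulations, and using that derived equivalence is an equivalence relation (in particular transitive), gives the claim.

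One point that needs a little care, and which I expect to be the main (though modest) obstacle, is matching hypotheses: the Kauer move in Theorem~\ref{thm:KauerMove} is stated for a purely local configuration in the plane, whereas a flip in a triangulation can fold the relevant square (the cases $A_2,A_3,A_4$ of Figure~\ref{fig:possibletriangles}), so the arc $a$ and its neighbours may be glued in complicated ways globally. This is precisely what Lemma~\ref{l:successors} and its proof handle: the folding map $\psi$ does not alter the clockwise successor relations at the endpoints of $a$, so the local picture around $a$ (the data that the Kauer move actually depends on) is exactly the model picture of Figure~\ref{fig:KauerMove}, and Proposition~\ref{pro:Kauerflip} is legitimately applicable even in the folded cases. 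I would also note explicitly that flippable arcs in the triangulation sense correspond to edges at which the Kauer move is defined, and that a non-flippable arc never needs to be used, since the flip graph is connected through flippable arcs alone. With these remarks in place the corollary follows formally.
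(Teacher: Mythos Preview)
Your proposal is correct and follows essentially the same approach as the paper's own proof: invoke Proposition~\ref{pro:Kauerflip} and Theorem~\ref{thm:KauerMove}, and use the connectedness of the flip graph of triangulations (\cite[Theorem~2]{Burman99}, \cite[Prop.~3.8]{FominShapiroThurston08}). Your additional remarks about the folded cases and the role of Lemma~\ref{l:successors} are helpful elaborations but not new ingredients.
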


\begin{proof}
This follows from Proposition~\ref{pro:Kauerflip} and
Theorem~\ref{thm:KauerMove}, using the fact that any two triangulations of
$(S,M)$ are connected by a sequence of flips (see~\cite[Theorem
  2]{Burman99},~\cite[Prop. 3.8]{FominShapiroThurston08}).
\end{proof}


\subsection{Marked surfaces with boundary}
\label{s:boundarysurfaces}
We first note that every oriented surface with boundary is homeomorphic to a
surface with boundary obtained from a surface without boundary by removing a
disjoint collection of discs.

Let $(S,M)$ be a marked surface with (non-empty) boundary.  The marked points
$M$ can be anywhere on the surface, on or off the boundary.  We assume that
there is at least one marked point on each connected component of $S$ and on
each connected component of the boundary of $S$.

Since we would like to consider flips of triangulations of surfaces, we assume,
that $(S,M)$ is not one of the following degenerate cases, in order to ensure
that $(S,M)$ has at least two triangulations
(see~\cite{FominShapiroThurston08}).

\begin{enumerate}[(a)]
\item a sphere with $1$ or $2$ marked points;
\item a monogon with $0$ or $1$ marked points;
\item a digon with no marked points;
\item a triangle with no marked points.
\end{enumerate}

In particular,
\begin{proposition}
Every marked oriented surface with boundary admits a filling ribbon graph.
\end{proposition}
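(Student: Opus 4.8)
The plan is to imitate the proof of Proposition~\ref{pro:admitsfilling} for closed surfaces, reducing the boundary case to it. First I would invoke the observation made at the start of Section~\ref{s:boundarysurfaces}: an oriented surface $S$ with boundary is homeomorphic to $\widehat S \setminus \bigsqcup_i D_i$, where $\widehat S$ is a closed oriented surface and the $D_i$ are finitely many disjoint open discs. The marked points $M$ lie on $S$; I would treat those on the boundary by first capping off each boundary component with a disc carrying one extra interior marked point (this is where the hypothesis that every boundary component has at least one marked point, and the exclusion of the degenerate cases (a)--(d), gets used, exactly as in~\cite{FominShapiroThurston08}), thereby producing a marked \emph{closed} surface $(\widehat S, \widehat M)$ to which Proposition~\ref{pro:admitsfilling} (or the triangulation construction mentioned right after it) applies.

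Next I would produce the filling ribbon graph on $(\widehat S, \widehat M)$ explicitly via an ideal triangulation $\T$ whose vertex set is $\widehat M$; by the remark following Proposition~\ref{pro:admitsfilling} this triangulation, regarded as a graph $\Gamma_\T$ with the cyclic orderings induced by the orientation, is a filling ribbon graph for $\widehat S$, its faces being the triangles. Then I would restrict attention to $S \subset \widehat S$: the discs $D_i$ we removed can be chosen so that each lies in the interior of a single face (triangle) of $\T$ and misses $\Gamma_\T$ entirely, since a face is an open disc and we may shrink the $D_i$. Consequently $\Gamma_\T$ sits inside $S$, the cyclic orderings still come from the orientation of $S$ (orientation is local, unaffected by puncturing), and $S \setminus |\Gamma_\T|$ is a disjoint union of the original faces, each possibly with a disc removed — still a finite disjoint union, now of discs and annuli. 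To get a genuine filling embedding in the sense of the definition before Proposition~\ref{pro:embeddingexists} (all complementary components discs) I would add, inside each face meeting a removed disc, a small arc joining $\Gamma_\T$ to a new valency-one vertex placed on the corresponding boundary component, cutting the annulus open into a disc; alternatively, and more cleanly, I would arrange the triangulation of $(\widehat S,\widehat M)$ already so that every boundary component of $S$ is an edge (or concatenation of edges) of $\T$, which is automatic when we triangulate $S$ itself as a surface-with-boundary and then observe the underlying graph is the desired ribbon graph.

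The main obstacle — really the only subtlety — is the bookkeeping at the boundary: ensuring the ribbon graph actually \emph{fills} (no complementary annuli) while keeping the cyclic orderings genuinely induced by the orientation of $S$, and checking this is possible precisely because the excluded degenerate cases (a)--(d) are exactly the ones where no triangulation (hence no filling graph of this type) exists. I would handle this by working with an ideal triangulation of $(S,M)$ as a bordered surface in the sense of~\cite[\S2]{FominShapiroThurston08}: such a triangulation exists under hypotheses (a)--(d), its arcs and boundary segments together with the marked points form a graph embedded in $S$, the orientation of $S$ endows it with a ribbon structure, and its complementary regions are exactly the triangles, which are discs. Thus the triangulation's underlying ribbon graph is the required filling ribbon graph, and the proposition follows. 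I would close by remarking that this parallels Corollary~\ref{c:fillingembedding} and sets up the boundary analogue of Corollary~\ref{cor:derivedinvariant} in the same way.
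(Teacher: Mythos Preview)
Your final paragraph is correct and is exactly the paper's proof: take an ideal triangulation of $(S,M)$ as a bordered surface, include the boundary arcs in the graph, and observe that the complement consists of the open triangles, which are discs. The paper states this in a single sentence and nothing more is needed.

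The detour through capping off is unnecessary and, as written, contains a slip. You include the boundary marked points of $M$ in $\widehat M$, so they are vertices of $\Gamma_{\T}$; but these points sit on $\partial D_i$, so the discs $D_i$ already meet $\Gamma_{\T}$ and cannot be ``shrunk'' into the interior of a single triangle while remaining the discs whose removal gives back $(S,M)$. Shrinking them would push the boundary marked points into the interior of the new surface, so you would no longer be producing a filling graph for the original marked surface $(S,M)$. The annulus issue you flag is a symptom of this, not the underlying problem. Simply delete the capping-off discussion and begin with the ideal triangulation of the bordered surface; that is already the whole argument.
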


\begin{proof}
As in Proposition~\ref{pro:admitsfilling}, take a triangulation of $(S,M)$ with vertex set $M$, and
include boundary arcs (i.e.\ parts of the boundary of $S$ between adjacent,
possibly equal, marked points on the boundary) in the graph.
\end{proof}

Some of the faces of the corresponding ribbon graph will correspond to boundary
components. So we have a distinguished subset, $F_B$ of the set of faces.

We define a \emph{ribbon graph with boundary} to be a tuple
$\widetilde{\Gamma}=(\Gamma,I,\{s_v\}_{v\in V},F_B)$, where
$(\Gamma,I,\{s_v\}_{v\in V})$ is a ribbon graph and $F_B$ is a subset of
its set of faces with the property that, for every undirected edge
$(e,\overline{e})$ of $\Gamma$, at most one of $e,\overline{e}$ lies in a
face in $F_B$.

Let $S_{\Gamma,I,\{s_v\}}=S_{\Gamma}$ be the surface constructed as in
the ribbon graph case, in which there is a filling embedding of $\Gamma$ (see
Proposition~\ref{pro:embeddingexists}).  Then $S_{\Gamma}\setminus |\Gamma|=
\sqcup_{f\in F}D_f$ where $F$ is a finite set and each $D_f$ is an open disc.
We define $S_{\widetilde{\Gamma}}$ to be $S_{\Gamma}\setminus \cup_{f\in
  F_B}D_f$, a surface with boundary obtained by removing the discs from
$S_{\Gamma}$ corresponding to the faces in $F_B$.
Then $\widetilde{\Gamma}\rightarrow S_{\widetilde{\Gamma}}$ is a
filling embedding of $\widetilde{\Gamma}$ into $S_{\widetilde{\Gamma}}$.

We extend the notion of morphism of ribbon graphs to ribbon graphs with boundary
in the natural way (i.e.\ such maps should preserve the marked set, $F_B$, of
faces). A morphism of ribbon graphs with boundary is an isomorphism if it has an
inverse which is also a morphism of ribbon graphs with boundary.

\begin{lemma} \label{l:fillingboundaryuniqueness}
Let $\widetilde{\Gamma}$ and $\widetilde{\Gamma'}$ be ribbon graphs with
boundary together with filling embeddings $\widetilde{\Gamma}\rightarrow
\widetilde{S}$ and $\widetilde{\Gamma'}\rightarrow \widetilde{S'}$ mapping the
boundary faces to the boundaries of the boundary components in each case.  If
$\varphi:\widetilde{\Gamma}\rightarrow \widetilde{\Gamma'}$ is an isomorphism of
ribbon graphs with boundary then it induces a homeomorphism
$\varphi:|\widetilde{\Gamma}|\rightarrow |\widetilde{\Gamma'}|$ which extends to
a homeomorphism from $\widetilde{S}$ to $\widetilde{S'}$.
\end{lemma}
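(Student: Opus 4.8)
The plan is to reduce the statement to the uniqueness of filling embeddings for ordinary ribbon graphs into closed surfaces (Proposition~\ref{pro:fillinguniqueness}), by capping off the boundary components of $\widetilde{S}$ and $\widetilde{S'}$ with discs. First I would forget the boundary data: an isomorphism $\varphi:\widetilde{\Gamma}\to\widetilde{\Gamma'}$ of ribbon graphs with boundary is in particular an isomorphism of the underlying ribbon graphs, and by definition it carries the distinguished set of faces $F_B$ bijectively onto $F_B'$ (and respects the cyclic orderings $\{s_v\}$ and the involution $I$, hence the whole face structure).

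Next, since the filling embedding $\widetilde{\Gamma}\rightarrow\widetilde{S}$ sends the boundary faces to the boundaries of the boundary components of $\widetilde{S}$, I would glue a $2$-disc onto each boundary component of $\widetilde{S}$ to obtain a compact oriented surface $S$ without boundary together with a filling embedding $\Gamma\rightarrow S$: the complement $S\setminus|\Gamma|$ is the disjoint union of the open discs $D_f$ for $f\in F\setminus F_B$ already present in $\widetilde{S}$ together with the interiors of the newly glued discs, one for each $f\in F_B$, so that the connected components of $S\setminus|\Gamma|$ are in bijection with all the faces of $\Gamma$. Doing the same to $\widetilde{\Gamma'}\rightarrow\widetilde{S'}$ yields a filling embedding $\Gamma'\rightarrow S'$. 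Applying Proposition~\ref{pro:fillinguniqueness} to $\Gamma\rightarrow S$, $\Gamma'\rightarrow S'$ and $\varphi$ then gives an orientation-preserving homeomorphism $\varphi:|\Gamma|\rightarrow|\Gamma'|$ (which is the same as $|\widetilde{\Gamma}|\rightarrow|\widetilde{\Gamma'}|$) extending to a homeomorphism $\Phi:S\rightarrow S'$.

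It then remains to check that $\Phi$ restricts to a homeomorphism $\widetilde{S}\rightarrow\widetilde{S'}$. Here $\Phi$ maps $S\setminus|\Gamma|$ homeomorphically onto $S'\setminus|\Gamma'|$ compatibly with $\varphi$; each connected component of $S\setminus|\Gamma|$ is an open disc whose boundary, pushed into the graph, traces out the edges of a single face in the combinatorial sense used above, and since $\varphi$ preserves this face structure, $\Phi$ must send the complementary disc corresponding to a face $f$ onto the complementary disc corresponding to $\varphi(f)$. In particular $\Phi$ carries the union $\bigcup_{f\in F_B}\mathring{D}_f$ of the interiors of the capping discs onto $\bigcup_{f'\in F_B'}\mathring{D}_{f'}$, and hence restricts to a homeomorphism $\widetilde{S}=S\setminus\bigcup_{f\in F_B}\mathring{D}_f\rightarrow S'\setminus\bigcup_{f'\in F_B'}\mathring{D}_{f'}=\widetilde{S'}$ extending $\varphi$.

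The main point requiring care is this last one: matching the complementary discs of $S$ with those of $S'$ via the face bijection induced by $\varphi$. This is essentially a matter of unwinding the definition of a face of a ribbon graph together with the fact, built into Proposition~\ref{pro:fillinguniqueness}, that $\Phi$ extends $\varphi$ and is orientation-preserving; the remaining verifications (that the capping-off produces a genuine filling embedding of $\Gamma$ into a \emph{compact} oriented surface, and that $\Phi$ behaves as claimed on the capping discs) are routine and pose no real obstacle.
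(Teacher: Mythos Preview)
Your proposal is correct and follows essentially the same approach as the paper: cap off the boundary components of $\widetilde{S}$ and $\widetilde{S'}$ with discs to obtain filling embeddings of the underlying ribbon graphs into closed surfaces, apply Proposition~\ref{pro:fillinguniqueness}, and then observe that the resulting homeomorphism restricts to $\widetilde{S}\to\widetilde{S'}$. The paper's version is terser, simply asserting that ``by the construction of this homeomorphism'' it restricts appropriately, whereas you spell out the face-matching argument more carefully; this extra detail is fine and does not represent a different strategy.
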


\begin{proof}
Let $S,S'$ be the surfaces without boundary obtained from $\widetilde{S}$ and
$\widetilde{S'}$ by gluing discs into their boundary components. The filling
embeddings $\Gamma\rightarrow \widetilde{S}$ and $\Gamma'\rightarrow
\widetilde{S'}$ induce filling embeddings $\Gamma\rightarrow \widetilde{S}$ and
$\Gamma\rightarrow S'$ of the underlying ribbon graphs. By
Proposition~\ref{pro:fillinguniqueness}, there is a homeomorphism
$\varphi:S\rightarrow S'$ restricting to a homeomorphism from $|\Gamma|$ to
$|\Gamma'|$. By the construction of this homeomorphism, it restricts to a
homeomorphism from $\widetilde{S}$ to $\widetilde{S'}$ with the required
properties.
\end{proof}

\begin{proposition}
Let $\widetilde{\Gamma}$ be a ribbon graph with boundary. Then there is an
oriented surface with boundary $S$ and filling embedding
$\widetilde{\Gamma}\rightarrow S$,
mapping the boundary faces to the boundaries of the boundary components,
which is unique up to homeomorphism.
\end{proposition}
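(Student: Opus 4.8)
The plan is to mirror the proof of the existence-and-uniqueness corollary in the boundaryless case (Corollary~\ref{c:fillingembedding}), combining the construction of $S_{\widetilde{\Gamma}}$ given just before the statement with the uniqueness furnished by Lemma~\ref{l:fillingboundaryuniqueness}. Existence is essentially already done: given $\widetilde{\Gamma}=(\Gamma,I,\{s_v\},F_B)$, first apply Proposition~\ref{pro:embeddingexists} to the underlying ribbon graph $\Gamma$ to obtain the compact oriented surface $S_\Gamma$ with filling embedding $\Gamma\rightarrow S_\Gamma$ and $S_\Gamma\setminus|\Gamma|=\sqcup_{f\in F}D_f$; then set $S=S_{\widetilde{\Gamma}}:=S_\Gamma\setminus\bigcup_{f\in F_B}D_f$. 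One checks that removing these open discs leaves a surface with boundary whose boundary components are exactly the circles bounding the discs indexed by $F_B$, and that the original embedding $\Gamma\rightarrow S_\Gamma$ restricts to a filling embedding $\widetilde{\Gamma}\rightarrow S$ in which each face in $F_B$ is carried onto a boundary component --- this last point uses the defining condition on $F_B$ (for each undirected edge at most one of $e,\overline{e}$ lies in a boundary face), which is precisely what guarantees that no edge of $\widetilde{\Gamma}$ gets pushed entirely into the removed region so that $|\widetilde{\Gamma}|=|\Gamma|$ still sits inside $S$ and still fills it.

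For uniqueness, suppose $\widetilde{\Gamma}\rightarrow S$ and $\widetilde{\Gamma}\rightarrow S'$ are two filling embeddings of the same ribbon graph with boundary, each mapping the boundary faces onto the boundary components. Apply Lemma~\ref{l:fillingboundaryuniqueness} with $\widetilde{\Gamma'}=\widetilde{\Gamma}$ and $\varphi=\id$: the identity is trivially an isomorphism of ribbon graphs with boundary, so the lemma yields a homeomorphism $S\rightarrow S'$ extending the identity on $|\widetilde{\Gamma}|$. Hence $S$ is determined up to homeomorphism by $\widetilde{\Gamma}$, which is the asserted uniqueness.

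The only real point requiring care --- and the step I would expect to be the main obstacle --- is verifying that the surface-with-boundary produced by deleting the open discs $\{D_f\}_{f\in F_B}$ from $S_\Gamma$ genuinely has the desired structure, i.e.\ that the closures of distinct deleted discs are disjoint (or meet only as expected) so that the result is a manifold with boundary and the embedding remains filling. This is where the hypothesis in the definition of a ribbon graph with boundary is used: since at most one of $e,\overline{e}$ lies in any boundary face, two distinct boundary faces cannot share a directed edge with the same underlying undirected edge in a way that would cause their discs to overlap along an edge of the graph, and by the face/boundary-component correspondence recalled after Proposition~\ref{pro:embeddingexists} the discs $D_f$ for $f\in F_B$ glue onto pairwise distinct boundary circles of $S_\Gamma^{\circ}$. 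Granting this, the complement $S_{\widetilde{\Gamma}}\setminus|\widetilde{\Gamma}|$ is the disjoint union of the discs $D_f$ with $f\in F\setminus F_B$, so $\widetilde{\Gamma}\rightarrow S_{\widetilde{\Gamma}}$ is filling and the non-boundary faces of $\widetilde{\Gamma}$ correspond bijectively to the complementary discs, completing the proof.
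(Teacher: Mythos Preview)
Your proposal is correct and follows essentially the same approach as the paper: existence via the construction of $S_{\widetilde{\Gamma}}$ given immediately before Lemma~\ref{l:fillingboundaryuniqueness}, and uniqueness by applying that lemma with $\varphi=\id$. The paper's proof is in fact just a two-line pointer to these ingredients, whereas you have unpacked them in more detail (including the role of the $F_B$ hypothesis), but the underlying argument is the same.
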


\begin{proof}
Existence is guaranteed by the argument preceding
Lemma~\ref{l:fillingboundaryuniqueness}, and uniqueness follows from
Lemma~\ref{l:fillingboundaryuniqueness}.
\end{proof}

We say that an arc in a triangulation of a marked surface with boundary is
\emph{flippable} if it is incident with two triangles
in $\T$ and if it is not a boundary arc.

\begin{corollary}
Let $\T$ be a triangulation of a marked oriented surface with boundary $(S,M)$.
Regarding $\T$ as a Brauer graph, the flip of $\T$ at a flippable arc $a$ in
$\T$ coincides with applying the Kauer move to $\T$ at $a$.
\end{corollary}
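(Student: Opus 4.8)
The plan is to reduce this corollary to the boundaryless case already handled in Proposition~\ref{pro:Kauerflip}, since the only new feature here is the presence of boundary arcs, which are never flipped and which simply sit inside boundary faces. First I would recall that a flippable arc $a$ in the boundary case is, by definition, incident with two distinct triangles $T_1$ and $T_2$ of $\T$ and is not itself a boundary arc; thus the local picture around $a$ is exactly the configuration of Figure~\ref{fig:twotriangles}, with a map $\psi$ from a square with diagonal $d$ onto $T_1\cup T_2$ in $S$ which may fold and identify edges or vertices. Crucially, none of the four edges $\psi(WX)$, $\psi(XY)$, $\psi(YZ)$, $\psi(ZW)$ of the square need be boundary arcs, but even if some of them are, the flip of $\T$ at $a$ and the Kauer move at $a$ are both purely local operations concerning the vertex $v_a$ of the quiver and its immediate neighbours in the cyclic orderings at the two endpoints of $a$, so the boundary arcs play no special role beyond being ordinary edges of the ribbon graph $\Gamma_{\T}$.

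Next I would observe that Lemma~\ref{l:successors} applies verbatim: its proof only uses that $\psi$ acts by folding and identifying and that $\psi(XZ)$ (in case $A_4$) is a flippable edge, and the list of possible triangles from Figure~\ref{fig:possibletriangles} is the same in the bounded case. Hence the clockwise successor of $a=\psi(d)$ at $\psi(X)$ is $\psi(WX)$ and the clockwise successor of $a$ at $\psi(Z)$ is $\psi(ZY)$. Combining this with the description of the Kauer move at $a$ from Theorem~\ref{thm:KauerMove} and Figure~\ref{fig:KauerMove} — which detaches $a$ from one side and re-glues it using precisely these successor relations, producing the diagonal $\psi(d')=a'$ — shows that the new Brauer graph $\Gamma_{\T'}$ obtained from the flip agrees with the one obtained from the Kauer move, exactly as in the proof of Proposition~\ref{pro:Kauerflip}.

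The only genuinely new point to check is compatibility with the distinguished set $F_B$ of boundary faces. Since $a$ is not a boundary arc, neither directed edge $e$ nor $\overline{e}$ corresponding to $a$ lies in a face of $F_B$, and the flip alters faces only in the region $T_1\cup T_2$; the two triangular faces are replaced by two new triangular faces, while every face in $F_B$ is untouched (it meets $T_1\cup T_2$, if at all, only along the boundary edges $\psi(WX),\ldots,\psi(ZW)$, whose orientations belonging to boundary faces are unchanged). Thus $\T'$ is again a genuine triangulation of the same surface with boundary, its boundary faces are the same as those of $\T$, and regarding $\T'$ as a ribbon graph with boundary we recover precisely the Kauer move applied to $\Gamma_{\T}$. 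I expect the only mild obstacle to be bookkeeping: being careful that in the folded cases $A_2,A_3,A_4$ a boundary edge of the square is not accidentally identified with $a$ itself — but this cannot happen, because $a$ is flippable hence not a boundary arc, and $\psi$ restricted to $d$ is injective onto $a$. With that, the corollary follows from Lemma~\ref{l:successors}, the definition of the Kauer move, and the definition of a flippable arc, exactly as Proposition~\ref{pro:Kauerflip} did in the boundaryless case.

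\begin{proof}
The argument is identical to that of Proposition~\ref{pro:Kauerflip}. Let $a$ be a flippable arc of $\T$, so that $a$ is not a boundary arc and is incident with two distinct triangles $T_1,T_2$; let $\psi$ be the map from a square with diagonal $d$ onto $T_1\cup T_2$, as in Figure~\ref{fig:twotriangles}. Since the list of possible triangles (Figure~\ref{fig:possibletriangles}) and the fact that $\psi$ acts only by folding and identifying are the same as in the boundaryless situation, Lemma~\ref{l:successors} applies without change: the clockwise successor of $a=\psi(d)$ at $\psi(X)$ is $\psi(WX)$, and at $\psi(Z)$ is $\psi(ZY)$. By the description of the Kauer move at $a$ in Theorem~\ref{thm:KauerMove} and Figure~\ref{fig:KauerMove}, the Kauer move detaches $a$ and re-attaches it using exactly these successor relations, yielding the edge $\psi(d')=a'$ of the flip. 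Hence the flip of $\T$ at $a$ and the Kauer move at $a$ produce the same Brauer graph. Finally, as $a$ is not a boundary arc, the flip does not affect any boundary face, so the boundary faces of the flipped triangulation coincide with those of $\T$, and the resulting ribbon graph with boundary is again the Kauer move applied to $\Gamma_{\T}$.
\end{proof}
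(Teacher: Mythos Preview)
Your proof is correct and follows exactly the paper's approach: the paper's proof is simply ``The proof is the same as the proof of Proposition~\ref{pro:Kauerflip},'' and your argument spells out precisely why Lemma~\ref{l:successors} and the definition of the Kauer move carry over unchanged in the presence of boundary. Your additional remarks about the boundary faces $F_B$ being preserved are correct but not strictly needed for the statement, which concerns only the underlying Brauer graph.
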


\begin{proof}
The proof is the same as the proof of Proposition~\ref{pro:Kauerflip}.
\end{proof}

Similarly to the case of a marked surface without boundary, for a marked surface with boundary, up to derived equivalence, there exists a unique Brauer graph algebra associated to that surface:

\begin{corollary} \label{cor:surfaceinvariant}
Let $(S,M)$ be a marked oriented surface with boundary. Then the derived equivalence
class of the Brauer graph algebra $A_{\T}$ associated to a triangulation $\T$ of
$(S,M)$ does not depend on the choice of triangulation $\T$.
\end{corollary}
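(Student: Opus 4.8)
The plan is to run the argument of Corollary~\ref{cor:derivedinvariant} in the bounded setting, with only a small amount of extra bookkeeping for the boundary arcs. So I would start from two triangulations $\T$ and $\T'$ of $(S,M)$, each taken with vertex set $M$ and each including all the boundary arcs (so that each is a genuine ribbon graph with boundary, hence a Brauer graph). Because the degenerate cases (a)--(d) have been excluded, the flip graph of triangulations of $(S,M)$ is connected: by~\cite[Prop.~3.8]{FominShapiroThurston08} (see also~\cite{Burman99}) there is a finite sequence $\T=\T_0,\T_1,\dots,\T_n=\T'$ in which each $\T_{i+1}$ is obtained from $\T_i$ by a flip at a flippable arc $a_i$. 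I would then observe that a flip at a flippable arc changes only that arc and leaves every other arc --- in particular every boundary arc --- untouched, so each intermediate $\T_i$ is again a triangulation containing all the boundary arcs and the Brauer graph algebra $A_{\T_i}$ is defined at every stage.

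The second step is purely a matter of quoting results already in place. By the Corollary immediately preceding the statement, when $\T_i$ is regarded as a Brauer graph the flip at $a_i$ coincides with the Kauer move at $a_i$; hence by Theorem~\ref{thm:KauerMove} the algebras $A_{\T_i}$ and $A_{\T_{i+1}}$ are derived equivalent. Composing these derived equivalences along the sequence yields $A_{\T}\simeq A_{\T'}$ in the derived category, which is exactly the assertion.

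I do not expect a serious obstacle here: the heart of the matter --- that a flip really does look like a Kauer move in \emph{every} local configuration, including all the foldings of the two adjacent triangles listed in Figure~\ref{fig:possibletriangles} --- is handled by the preceding Corollary, whose proof mirrors Proposition~\ref{pro:Kauerflip} and rests on the successor computation of Lemma~\ref{l:successors}. The one point I would be mildly careful about is simply to state clearly that boundary arcs are part of the Brauer graph but are never flipped, so the presence of the distinguished set $F_B$ of boundary faces plays no role in the argument; this is automatic because flips are local moves supported at interior (flippable) arcs only. Thus the proof reduces to: connectivity of the flip graph $\Rightarrow$ each flip is a Kauer move $\Rightarrow$ each Kauer move is a derived equivalence, and I would write it in essentially the two or three sentences of the proof of Corollary~\ref{cor:derivedinvariant}.
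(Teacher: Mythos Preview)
Your proposal is correct and follows essentially the same approach as the paper, which simply says ``As for the proof of Corollary~\ref{cor:derivedinvariant}.'' You have just unpacked that one line into the three ingredients it abbreviates: connectivity of the flip graph, the identification of a flip with a Kauer move from the preceding corollary, and Kauer's derived equivalence theorem.
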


\begin{proof}
As for the proof of Corollary~\ref{cor:derivedinvariant}.
\end{proof}

\section{Cluster Algebras}
\label{sec:cluster}

In this section we discuss the relationship between cluster theory and the
results above.

Let $P_n$ be a polygon with $n$ marked points on its boundary, and $\T$ a
triangulation of $P_n$. Then regarding $\T$ as a Brauer graph, there is
a corresponding quiver $Q_{\T}$. For an example in the case $n=7$,
see Figure~\ref{fig:DiscQuiver}. We call arrows outside $P_n$
\emph{boundary arrows}.

\begin{figure}
\includegraphics[width=6cm]{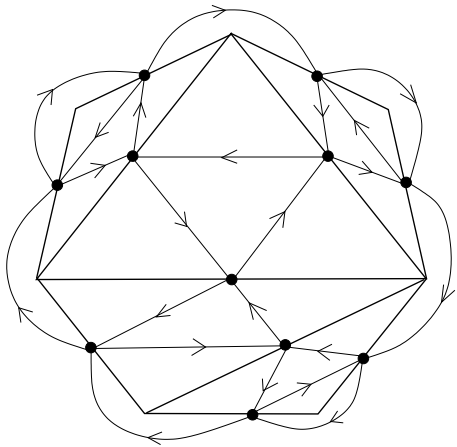}
\caption{Example of the quiver $Q_{\T}$ of a triangulation $\T$.}
\label{fig:DiscQuiver}
\end{figure}

Let $R$ denote the polynomial ring $K[x]$, and let $\Lambda$ denote the
Gorenstein tiled $R$-order considered in~\cite{demonetluo}, defined as a
matrix algebra by the $n\times n$ matrix:
$$
\begin{pmatrix}
R & R & R & \cdots & R & (x^{-1}) \\
(x) & R & R & \cdots & R & R \\
(x^2) & (x) & R & \cdots & R & R \\
\vdots &&& \ddots & & \vdots \\
(x^2) & (x^2) & (x^2) & \cdots & R & R \\
(x^2) & (x^2) & (x^2) & \cdots & (x) & R
\end{pmatrix}.
$$

Let $\F$ be the category of (maximal) Cohen-Macaulay modules over $\Lambda$.
We recall some of the results of~\cite{demonetluo}
(see also~\cite{BaurKingMarsh,JensenKingSu}).

\begin{theorem} \cite[Thm.\ 1]{demonetluo} \label{t:demonetluo}
\begin{enumerate}
\item[(a)]
The category $\F$ is Frobenius and its stable category $\C$ is triangle
equivalent to the cluster category of type $A_{n-3}$.
\item[(b)]
The indecomposable objects in $\F$ are in bijection with
the arcs joining vertices in $P_n$ (including boundary arcs).
\item[(c)]
The bijection in (b) induces a bijection $\T\mapsto M_{\T}$ between the
triangulations in $P_n$ and the cluster-tilting objects in $\F$.
\item[(d)]
For each triangulation $\T$ of $P_n$, the algebra $\End_{\F}(M_{\T})^{\op}$
is a frozen Jacobian algebra associated to a frozen quiver with potential.
\end{enumerate}
\end{theorem}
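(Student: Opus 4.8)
The plan is to reconstruct the argument of~\cite{demonetluo} in four stages, corresponding to part~(a), then parts~(b)--(c), then part~(d), and finally the delicate technical point. \emph{Stage 1: the Frobenius structure.} Since $R=K[x]$ is a Dedekind domain and $\Lambda$ is an $R$-order --- finitely generated and projective over $R$, with $\Lambda\otimes_R K(x)$ semisimple --- the category $\F$ of $\Lambda$-lattices is an exact category, closed under extensions and kernels of epimorphisms inside $\Lambda$-$\mod$. To see that $\F$ is Frobenius I would verify directly from the displayed matrix that $\Lambda$ is a \emph{Gorenstein order}, i.e.\ that $D\Lambda:=\Hom_R(\Lambda,R)$ is projective as a left and as a right $\Lambda$-module; for a tiled order this is a finite combinatorial check on the ideals $I_{ij}$ in the matrix. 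Gorenstein-ness forces the projective $\Lambda$-lattices to be exactly the projective--injective objects of $\F$, so $\F$ is Frobenius and $\C=\underline\F$ is triangulated with shift $\Omega^{-1}$.

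\emph{Stage 2: identifying $\C$ (part (a)).} I would show $\C$ is $2$-Calabi--Yau and then invoke a recognition theorem. For the $2$-CY property, compute the Auslander--Reiten translate $\tau$ of $\F$: it is obtained from the transpose together with the $\Hom_R(-,R)$-duality, and the symmetry built into $\Lambda$ makes $\tau\cong\Omega^{-2}$ on $\C$, i.e.\ $\tau\cong[1]$, which is exactly the $2$-CY condition (equivalently, $\Lambda$ is, up to the standard shift, a symmetric, hence $3$-Calabi--Yau, order, and $\mathrm{CM}$ of such an order is stably $2$-CY). Taking the fan triangulation $\T_0$ of $P_n$ (all diagonals through a fixed vertex), one checks using the explicit lattices of Stage~3 that $\underline{\End}_\F(M_{\T_0})\cong KA_{n-3}$, hereditary of Dynkin type $A_{n-3}$; Keller's recognition theorem for cluster categories (an algebraic, $\Hom$-finite, idempotent-complete $2$-CY triangulated category with a cluster-tilting object whose stable endomorphism algebra is hereditary of Dynkin type $\Delta$ is triangle equivalent to the cluster category of type $\Delta$) then yields $\C\simeq\mathcal C_{A_{n-3}}$, proving~(a).

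\emph{Stage 3: the explicit lattices (parts (b)--(d)).} For each arc $a=(i,j)$ of $P_n$, boundary arcs included, I would write down a $\Lambda$-lattice $M_a\subseteq K(x)^n$ cut out by prescribed $x$-adic valuations determined by $i$ and $j$, and prove: (i) each $M_a$ is indecomposable with local endomorphism ring; (ii) $a\mapsto M_a$ is injective; (iii) $M_a$ is projective--injective precisely when $a$ is a boundary arc, matching the $n$ indecomposable projectives of $\Lambda$; and (iv) the $M_a$ exhaust the indecomposables of $\F$ --- the last point by computing the Auslander--Reiten quiver of $\F$ (that of $\mathcal C_{A_{n-3}}$ with the $n$ projective--injective vertices reinserted) or by a direct lattice classification for this particular tiled order. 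This gives~(b). For~(c), one computes from the valuation data that $\Ext^1_\F(M_a,M_b)=0$ exactly when $a$ and $b$ do not cross; hence the basic rigid objects correspond to partial triangulations and the maximal, i.e.\ cluster-tilting, ones to triangulations, which necessarily contain all boundary arcs, giving $\T\mapsto M_\T$. For~(d), compute $\End_\F(M_{\T_0})^{\op}$ for the fan triangulation directly from the maps among the $M_a$, obtaining the frozen Jacobian algebra $\mathcal P(Q_{\T_0},W_{\T_0})$ --- its non-frozen part the linear $A_{n-3}$ quiver with no relations, the remaining relations involving the boundary (frozen) arrows --- and then propagate to an arbitrary $\T$ by Iyama--Yoshino mutation of cluster-tilting objects: mutation at a diagonal matches the flip of $\T$ and, at the level of ice quivers with potential, matches Derksen--Weyman--Zelevinsky mutation, which by Labardini-Fragoso's analysis of triangulated surfaces sends $(Q_\T,W_\T)$ to $(Q_{\mu_a\T},W_{\mu_a\T})$; since any triangulation of $P_n$ is joined to $\T_0$ by flips, induction gives $\End_\F(M_\T)^{\op}\cong\mathcal P(Q_\T,W_\T)$ for all $\T$.

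\emph{Main obstacle.} The hard part is the explicit lattice analysis of Stage~3: choosing the valuation data so that the $M_a$ are indecomposable and so that crossing versus non-crossing of arcs matches (non)vanishing of $\Ext^1$, and --- most delicate --- proving completeness, that there are no further indecomposable Cohen--Macaulay $\Lambda$-modules. Once the lattices and their $\Hom$ and $\Ext$ are under control, the categorical ingredients (Frobenius structure, $2$-CY property, the recognition theorem, compatibility of Iyama--Yoshino mutation with quiver-with-potential mutation) are essentially formal; care is still needed with the excluded small polygons and with fixing orientation and labelling conventions so that the bijections in~(b)--(d) are stated coherently.
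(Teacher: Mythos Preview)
The paper does not contain its own proof of this statement: Theorem~\ref{t:demonetluo} is quoted verbatim from~\cite{demonetluo} as an external input, with no argument given or required in the present paper. There is therefore nothing here to compare your proposal against.

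That said, your outline is a reasonable sketch of how one might establish the result in the style of~\cite{demonetluo}: the Gorenstein-order verification for the Frobenius property, the explicit lattice models $M_a$ for arcs, the crossing/$\Ext^1$ correspondence, and the propagation of the Jacobian-algebra description via mutation are all the expected ingredients. If you want to assess whether your reconstruction matches the original, you need to consult~\cite{demonetluo} directly rather than the present paper; here the theorem is used only as a black box to motivate Proposition~\ref{pro:brauercluster} and Remark~\ref{rem:twoequivalences}.
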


\begin{remark} \label{r:quiverscoincide}
The quiver of the frozen Jacobian algebra in Theorem~\ref{t:demonetluo}
can be obtained from the quiver $Q_{\T}$ by deleting boundary arrows
whose end points correspond to two sides of the same triangle
of $\T$. The boundary vertices are frozen.
Note also that $Q_{\T}$ coincides exactly with the quiver of the Postnikov diagram
associated to $\T$ in~\cite[\S12]{BaurKingMarsh} using a modified version of
the construction in~\cite[Cor. 2]{Scott06}. The associated dimer algebra
in~\cite{BaurKingMarsh} is isomorphic to the frozen Jacobian algebra referred to
above by~\cite[Lemma 11.1]{BaurKingMarsh}.
\end{remark}

The following is a special case of~\cite[Prop.\ 4]{Palu09}, using
$\Mod \Lambda$ to denote the category of all modules over $\Lambda$.

\begin{theorem} \cite{Palu09} \label{thm:clusterequivalence}
Let $\C$ be a Hom-finite $2$-Calabi-Yau triangulated category which is the
stable category of a Frobenius category $\F$. Let $T,T'$ be cluster-tilting
objects in $\C$ with preimages $M,M'$ in $\F$. Then
$D(\Mod \End(M)^{\op})\simeq D(\Mod \End(M')^{\op})$.
\end{theorem}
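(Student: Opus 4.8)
We only indicate the argument; full details are in~\cite{Palu09}. Write $B=\End(M)^{\op}$ and $B'=\End(M')^{\op}$. We may assume $M$ and $M'$ are basic, since replacing a cluster-tilting object by its $\add$-closure only changes the endomorphism algebra up to Morita equivalence, hence up to derived equivalence. The plan is to reduce to a single mutation and then to exhibit an explicit tilting complex over $B$ with endomorphism algebra $B'$, so that the conclusion follows from Rickard's theorem. By the theory of mutation of cluster-tilting objects in Hom-finite $2$-Calabi--Yau triangulated categories (Iyama--Yoshino), mutation-equivalent basic cluster-tilting objects are linked by a finite chain of single mutations; since derived equivalence is transitive, it suffices to treat the case $T'=\mu_k(T)$, where $T=\overline{T}\oplus T_k$, $T'=\overline{T}\oplus T_k^*$, and there are the two exchange triangles $T_k\to E\to T_k^*\to \Sigma T_k$ and $T_k^*\to E'\to T_k\to \Sigma T_k^*$ in $\C$ with $E,E'\in\add\overline{T}$.

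Next I would transport this data to the Frobenius category $\F$. Since $\C=\underline{\F}$, each exchange triangle lifts (after adding suitable projective-injective summands, which lie in $\add M\cap\add M'$) to a conflation in $\F$; writing $M=\overline{M}\oplus M_k$, $M'=\overline{M}\oplus M_k^*$ and letting $\P$ denote the projective-injectives, we obtain a short exact sequence $0\to M_k\to E_0\to M_k^*\to 0$ with $E_0\in\add(\overline{M}\oplus\P)\subseteq\add M$, and similarly $0\to M_k^*\to E_0'\to M_k\to 0$. Applying $\Hom_\F(M,-)$, which is fully faithful from $\add M$ onto the projective $B$-modules, and using the vanishing $\Ext^1_\F(M,M_k)=\Ext^1_\C(M,M_k)=0$ coming from the cluster-tilting property, turns the first sequence into a (minimal) projective presentation $P_1\xrightarrow{d}P_0\to\Hom_\F(M,M_k^*)\to 0$ with $P_0=\Hom_\F(M,E_0)$, $P_1=\Hom_\F(M,M_k)$ and $d$ injective. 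Let $T^{\bullet}$ be the direct sum of the two-term complex $(P_1\xrightarrow{d}P_0)$ in degrees $-1,0$ and the stalk complex $\Hom_\F(M,\overline{M})$ in degree $0$.

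I would then verify that $T^{\bullet}$ is a tilting complex over $B$. Generation of $D^b(\mod B)$ by $T^{\bullet}$ follows from the second exchange sequence, which shows that the indecomposable projective $\Hom_\F(M,M_k)$ lies in the thick subcategory generated by $T^{\bullet}$. Self-orthogonality, i.e.\ $\Hom_{D^b(B)}(T^{\bullet},\Sigma^i T^{\bullet})=0$ for $i\neq 0$, is the crucial point, and is deduced from the $2$-Calabi--Yau duality in $\C$ together with the $\Ext^1$-vanishing built into the definition of a cluster-tilting object, which forces the higher cohomology of $\RHom_B(T^{\bullet},T^{\bullet})$ to vanish. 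Finally, applying $\Hom_{D^b(B)}(-,\Hom_\F(M,\overline{M})[0])$ and $\Hom_{D^b(B)}(T^{\bullet},T^{\bullet})$ and chasing through both conflations above, the left-exactness of $\Hom_\F(M,-)$, and the cluster-tilting vanishings, identifies every Hom-space in $\End_{D^b(B)}(T^{\bullet})$ with the corresponding Hom-space between summands of $M'=\overline{M}\oplus M_k^*$ in $\F$; hence $\End_{D^b(B)}(T^{\bullet})\cong\End_\F(M')^{\op}=B'$. By Rickard's theorem (or Happel's tilting theorem) this yields $D(\Mod B)\simeq D(\Mod B')$, and composing along the mutation sequence proves the statement. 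The main obstacle is exactly this last verification: one must keep precise track of the projective-injective summands when passing between $\C$ and $\F$, and use the $2$-Calabi--Yau property carefully, both to establish self-orthogonality of $T^{\bullet}$ and to pin down its endomorphism algebra on the nose.
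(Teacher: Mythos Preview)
The paper does not prove this statement: it is quoted as a special case of~\cite[Prop.~4]{Palu09} and used as a black box, so there is no proof in the paper to compare your argument against directly.

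That said, your sketch has a gap at the reduction step. You claim it suffices to treat a single mutation, but the theorem as stated does not assume that $T$ and $T'$ are mutation-equivalent, and in a general Hom-finite $2$-Calabi--Yau triangulated category there can be several mutation classes of basic cluster-tilting objects (Iyama--Yoshino guarantees that mutation at each indecomposable summand is possible, not that the resulting exchange graph is connected). So the reduction to a chain of single mutations is unjustified at this level of generality. Palu's argument avoids the issue: rather than chaining mutations, one shows directly that $\Hom_{\F}(M,M')$, viewed as a module over $\End_{\F}(M)^{\op}$, has projective dimension at most one and is a tilting module with endomorphism ring $\End_{\F}(M')^{\op}$. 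The ingredients are that every object of $\F$ admits a length-two $\add M$-resolution (the lift to $\F$ of the cluster-tilting approximation property), together with the $2$-Calabi--Yau duality for the $\Ext$-vanishing and the endomorphism computation. Your two-term complex in the single-mutation case is exactly the special case of this resolution when $M$ and $M'$ differ in one indecomposable summand, so the local analysis you outline is correct in spirit; it is only the global reduction that fails. For the paper's sole application (triangulations of a disc, where any two cluster-tilting objects \emph{are} connected by flips) your argument would in fact suffice, but it does not establish the theorem as stated.
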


Theorem~\ref{thm:clusterequivalence} applies in this case, and we have:

\begin{proposition} \label{pro:brauercluster}
Let $\T,\T'$ be triangulations of $P_n$, and $M_{\T}$, $M_{\T'}$ the
corresponding cluster-tilting objects in $\F$.
Then the categories $D(\Mod \End(M_{\T})^{\op})$ and $D(\Mod \End(M_{\T'})^{\op})$ are
equivalent.
\end{proposition}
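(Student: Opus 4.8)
The plan is to apply Theorem~\ref{thm:clusterequivalence} directly, with the Frobenius category being the category $\F$ of maximal Cohen-Macaulay $\Lambda$-modules introduced above. First I would check that the hypotheses of Theorem~\ref{thm:clusterequivalence} are met: by Theorem~\ref{t:demonetluo}(a), $\F$ is a Frobenius category whose stable category $\C$ is triangle equivalent to the cluster category of type $A_{n-3}$, and this cluster category is a Hom-finite $2$-Calabi-Yau triangulated category (being the cluster category of a Dynkin quiver in the sense of Buan-Marsh-Reineke-Reiten-Todorov). So $\C$ fits the setup of Theorem~\ref{thm:clusterequivalence} as the stable category of the Frobenius category $\F$.

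Next I would produce the relevant cluster-tilting objects. By Theorem~\ref{t:demonetluo}(c), the triangulations $\T$ and $\T'$ of $P_n$ give cluster-tilting objects $M_{\T}$ and $M_{\T'}$ in $\F$. Under the triangle equivalence $\C \simeq \mathcal{C}_{A_{n-3}}$, the images $T, T'$ of $M_{\T}, M_{\T'}$ in the stable category $\C$ are cluster-tilting objects in $\C$ (the stable images of cluster-tilting objects of a Frobenius category that contain all the projective-injectives are cluster-tilting in the stable category; here $M_{\T}$ and $M_{\T'}$ both contain the boundary arcs, which correspond to the projective-injective objects of $\F$). Thus $T, T'$ are cluster-tilting objects in the $2$-Calabi-Yau triangulated category $\C$, with preimages $M_{\T}, M_{\T'}$ in $\F$.

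Then the conclusion is immediate: applying Theorem~\ref{thm:clusterequivalence} with this $\C, \F, T = T', M = M_{\T}, M' = M_{\T'}$ gives a triangle equivalence $D(\Mod\End(M_{\T})^{\op}) \simeq D(\Mod\End(M_{\T'})^{\op})$, which is exactly the assertion of the proposition. I would phrase the proof as a short paragraph invoking Theorem~\ref{t:demonetluo}(a),(c) to set up the objects and then Theorem~\ref{thm:clusterequivalence} to draw the conclusion.

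The main point requiring care — the only place where the argument is not a pure citation — is verifying that the stable images $T, T'$ of $M_{\T}, M_{\T'}$ really are cluster-tilting objects in $\C$ in the sense required by Theorem~\ref{thm:clusterequivalence}, i.e.\ that passing to the stable category does not destroy the cluster-tilting property. This is standard (one uses that the projective-injective summands, corresponding to the boundary arcs, are common to all the $M_{\T}$ and become zero in $\C$, and that $\Ext^1$ groups in $\C$ are computed by stable $\Hom$, so rigidity and maximality transfer), but it is the step I would want to state explicitly rather than leave implicit, possibly citing the relevant structural results on Frobenius categories and their stable categories from~\cite{demonetluo} or~\cite{BaurKingMarsh,JensenKingSu}.
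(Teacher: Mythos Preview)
Your proposal is correct and matches the paper's approach exactly: the paper simply states that Theorem~\ref{thm:clusterequivalence} applies in this case (using Theorem~\ref{t:demonetluo} to supply the Frobenius category and cluster-tilting objects) and records the proposition as an immediate consequence, without a separate proof. Your write-up is in fact more detailed than the paper's, which does not spell out the verification that the stable images are cluster-tilting.
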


\begin{remark} \label{rem:twoequivalences}
It is interesting to compare the above with
Corollary~\ref{cor:surfaceinvariant} (in the disc case).
We see that, in terms of derived equivalences, the algebras $\End(M_{\T})$,
for $\T$ a triangulation of a disc, behave in a similar way to the
corresponding Brauer graph algebras $A_{\T}$.
\end{remark}

Using~\cite[Lemma 3.1]{CalderoChapotonSchiffler06} and
Remark~\ref{r:quiverscoincide}, we have:

\begin{proposition} \label{pro:triangulationflip}
Let $\T$ be a triangulation of $(S,M)$ and $\T'$ the triangulation obtained from
$\T$ by flipping an internal arc, $\gamma$. Then the quiver of the
Brauer graph algebra associated to $\T'$, i.e., $Q_{\T'}$, can be
obtained from $Q_{\T}$ by applying Fomin-Zelevinsky quiver
mutation~\cite{FominZelevinsky02} to $Q_{\T}$ at the vertex corresponding to
$\gamma$, leaving the boundary arrows unchanged.
\end{proposition}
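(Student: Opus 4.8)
The plan is to verify the claim by a direct local comparison of two combinatorial operations: the Fomin--Zelevinsky quiver mutation at the vertex $\gamma$ in $Q_{\T}$, and the passage from $Q_{\T}$ to $Q_{\T'}$ obtained by first flipping the arc $\gamma$ geometrically and then reading off the quiver of the resulting Brauer graph algebra. Since both operations only affect arrows incident to vertices lying in the two triangles adjacent to $\gamma$, it suffices to work inside a neighbourhood of those two triangles; outside, both quivers agree with $Q_{\T}$ by definition of the Brauer graph quiver (arrows come from successor relations, and these are unchanged away from the flip region by Lemma~\ref{l:successors}).

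First I would set up notation following Figure~\ref{fig:twotriangles}: the arc $\gamma$ is the image $\psi(d)$ of a diagonal $d$ of a square with vertices $W,X,Y,Z$, and $\gamma'=\psi(d')$ is the image of the other diagonal. In the unfolded square the four boundary arcs $WX$, $XZ$, $ZY$, $YW$ together with $\gamma$ and then $\gamma'$ give an explicit small Brauer graph (a ribbon graph), and I would compute $Q_{\T}$ and $Q_{\T'}$ restricted to this piece directly from the definition in Section~\ref{section:notation}, using the successor relations recorded in Lemma~\ref{l:successors}. One then checks that these two local quivers are related exactly by Fomin--Zelevinsky mutation at the vertex $v_\gamma = v_{\gamma'}$: arrows into and out of $v_\gamma$ get reversed, and for each pair of arrows $u\to v_\gamma\to w$ a new arrow $u\to w$ is created, with two-cycles then cancelled. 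The folding cases $A_1$--$A_4$ of Figure~\ref{fig:possibletriangles} must be handled, but as in the proof of Lemma~\ref{l:successors} the folding map $\psi$ only identifies vertices and edges and preserves successor relations, so the quiver computation in the folded picture is the image of the one in the unfolded square; this reduces everything to the generic (unfolded) case.

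The clean way to organise this, and where I would actually extract the statement, is to invoke \cite[Lemma 3.1]{CalderoChapotonSchiffler06} together with Remark~\ref{r:quiverscoincide}: that lemma already identifies the quiver $Q_{\T}^{\mathrm{red}}$ obtained from $Q_{\T}$ by deleting the boundary arrows between two sides of the same triangle with the Fomin--Zelevinsky adjacency quiver of the triangulation $\T$, and states that flipping $\gamma$ corresponds to mutating this quiver at $v_\gamma$. Since no boundary arrow is incident to $v_\gamma$, mutation at $v_\gamma$ commutes with deleting and re-adding the boundary arrows, so the statement about $Q_{\T}$ and $Q_{\T'}$ themselves follows from the statement about the reduced quivers. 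The remaining point is that the boundary arrows of $Q_{\T'}$ genuinely coincide with those of $Q_{\T}$: a boundary arrow encodes a successor relation between two consecutive boundary arcs at a boundary vertex, and flipping an internal arc changes neither the boundary arcs nor their cyclic order at any vertex, so these arrows are literally unchanged.

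The main obstacle I anticipate is bookkeeping rather than conceptual: one must be careful that the cyclic orderings around the vertices $\psi(X)$ and $\psi(Z)$ (and around $\psi(W)$, $\psi(Y)$) behave correctly under the flip, in particular that inserting $\gamma'$ in place of $\gamma$ produces exactly the successor relations predicted by quiver mutation, and that in the degenerate folding cases $A_2$--$A_4$ (where, e.g., two of $W,X,Y,Z$ coincide, or an edge is identified with another) no spurious arrows or loops appear and the two-cycle cancellation in the definition of mutation matches the relations of type III. This is exactly the kind of case analysis already carried out for Lemma~\ref{l:successors}, so I would lean on that lemma and on Remark~\ref{r:quiverscoincide} to keep the argument short, checking the correspondence in the generic case in full and remarking that the folded cases follow by applying $\psi$.
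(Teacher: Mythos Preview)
Your proposal is correct and takes essentially the same approach as the paper: the paper's entire justification is the sentence ``Using~\cite[Lemma 3.1]{CalderoChapotonSchiffler06} and Remark~\ref{r:quiverscoincide}, we have:'' placed immediately before the proposition, which is precisely the argument you give in your third paragraph. The surrounding material you supply (the local analysis via Lemma~\ref{l:successors}, the handling of boundary arrows, and the remarks on the folding cases) is additional scaffolding that the paper omits, but the core deduction is identical.
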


A version of Proposition~\ref{pro:triangulationflip}
holds in a more general context. Suppose that $(S,M)$ is a marked oriented
surface in which every element of $M$ lies on the boundary of $S$
(i.e.\ the \emph{unpunctured} case). Let
$\iota:\Gamma\rightarrow S$ be a filling embedding of a ribbon graph $\Gamma$
into $S$, such that $M=\iota(\Gamma_0)$ and boundary faces are mapped to the
boundaries of the boundary components.

\begin{proposition}
Let $e$ be an edge of $\Gamma$ with image $\gamma=\iota(e)$ not on the
boundary. Then, applying a Kauer move to $\Gamma$ at $e$ corresponds to
twisting $\gamma$ in the sense of~\cite[\S3]{MarshPalu} with respect to
the set of remaining edges of $|\Gamma|$ which are not on the boundary.
If $\Gamma$ is a triangulation, then the change in the quiver $Q_{\Gamma}$
of the Brauer graph algebra of $\Gamma$ under this twist coincides with
Fomin-Zelevinsky quiver mutation~\cite{FominZelevinsky02}.
\end{proposition}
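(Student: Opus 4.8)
The plan is to reduce this statement to its two predecessors. The first sentence — that a Kauer move at $e$ corresponds to the twist of $\gamma=\iota(e)$ in the sense of~\cite[\S3]{MarshPalu} — should be proved by a direct comparison of the two local moves, exactly as in the proof of Proposition~\ref{pro:Kauerflip} but now without the assumption that $\Gamma$ is a triangulation. First I would recall the definition of the twist from~\cite[\S3]{MarshPalu}: the twist of an arc $\gamma$ with respect to a collection of arcs deforms each arc that crosses $\gamma$ by dragging its endpoints along $\gamma$, which in the neighbourhood of $\gamma$ has precisely the effect of the Kauer move depicted in Figure~\ref{fig:KauerMove} — the edge $a=e$ is detached from one of its endpoints and reattached so as to "go around" the configuration of neighbouring edges. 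The key observation, as in Lemma~\ref{l:successors}, is that since we are working with a \emph{filling} embedding and the surface is unpunctured (so $M=\iota(\Gamma_0)\subseteq\partial S$), the cyclic orderings at the two endpoints $W=e^-$ and $Z=e^+$ of $e$ together with the faces meeting $e$ determine both the Kauer move output and the twist output, and these coincide edge-by-edge. The only subtlety compared to the triangulation case is that the faces adjacent to $\gamma$ need no longer be triangles, so one must phrase the comparison purely in terms of successor relations in the ribbon graph, i.e. in terms of $s_W$ and $s_Z$ restricted to the edges near $e$; this is where I expect the bulk of the (routine but fiddly) work to lie.

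For the second sentence — that when $\Gamma$ is a triangulation the induced change in $Q_{\Gamma}$ is Fomin–Zelevinsky quiver mutation at the vertex $v_e$ — I would invoke Proposition~\ref{pro:triangulationflip} together with Proposition~\ref{pro:Kauerflip}. By Proposition~\ref{pro:Kauerflip}, in the triangulation case the Kauer move at $e$ coincides with the flip of the triangulation at the arc $\gamma=\iota(e)$; and by the first sentence of the present proposition, it also coincides with the twist of $\gamma$. Hence the twist and the flip produce the same triangulation (and in particular the same ribbon graph), so the associated quivers are literally equal. Proposition~\ref{pro:triangulationflip} then identifies the passage $Q_{\Gamma}\rightsquigarrow Q_{\Gamma'}$ with Fomin–Zelevinsky mutation at $v_\gamma$, leaving boundary arrows unchanged, which is exactly the claim.

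The main obstacle is the first step: one must be careful that the move~\cite[\S3]{MarshPalu} calls the "twist" of $\gamma$ \emph{with respect to the remaining non-boundary edges} really is local around $\gamma$ in the way the Kauer move is, i.e. that the twist does not have a global effect elsewhere on the surface. This follows because a filling embedding leaves only disc-complementary regions, so the isotopy class of each non-boundary arc is determined by how it meets $\gamma$ and its two adjacent faces; but spelling this out cleanly — ideally with a figure matching Figure~\ref{fig:KauerMove} — is the crux. Everything after that is a formal concatenation of results already established in Sections~\ref{s:surfaces} and~\ref{sec:cluster}.
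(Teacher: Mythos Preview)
Your plan for the first sentence matches the paper exactly: the paper's proof says only that the first statement ``follows from a comparison of the definition of the Kauer move and the definition of twisting in~\cite[\S3]{MarshPalu}'', with no further detail. One small correction to your description: the twist in~\cite{MarshPalu} moves the arc $\gamma$ itself (sliding an endpoint past the adjacent arc), rather than deforming arcs that cross $\gamma$; since $\Gamma$ is embedded there are no crossing arcs anyway. This does not affect your plan, which is to match the two local pictures.

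For the second sentence you take a slightly different route from the paper. The paper does not reduce to Proposition~\ref{pro:triangulationflip}; it simply says the statement ``can be checked directly as in~\cite[Prop.~4.8]{FominShapiroThurston08}''. Your reduction (twist $=$ Kauer move by the first part, $=$ flip by Proposition~\ref{pro:Kauerflip}, then invoke Proposition~\ref{pro:triangulationflip} for flip $\leadsto$ FZ mutation) is logically clean, but note that the justification given for Proposition~\ref{pro:triangulationflip} cites only disc-specific sources (\cite{CalderoChapotonSchiffler06} and Remark~\ref{r:quiverscoincide}), so to apply it on a general unpunctured surface you still need to observe that the effect of a flip on $Q_{\Gamma}$ is local to the two triangles meeting at $\gamma$ --- which is precisely the content of the direct check the paper defers to~\cite{FominShapiroThurston08}. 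Either approach works; yours packages things via earlier results, the paper's is self-contained modulo the cited reference.
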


\begin{proof}
The first statement follows from a comparison of the definition of the
Kauer move and the definition of twisting in~\cite[\S3]{MarshPalu}. The
second statement can be checked directly as
in~\cite[Prop.\ 4.8]{FominShapiroThurston08}.
\end{proof}

Note that the twist of arcs considered in~\cite{MarshPalu} is shown to
correspond to a categorical mutation in the cluster
category~\cite{BruestleZhang11} associated to the surface.

We note that A. Dugas~\cite{Dugas} has remarked on similarities between quiver
mutation (in fact mutation of quivers with potential in the sense
of~\cite{DerksenWeymanZelevinsky}) and tilting mutations considered by
Aihara~\cite{Aihara}. Note that these tilting mutations can be
regarded, in the case of the complement of a single vertex, as a special
case of the Kauer derived equivalences~\cite{Kauer98} discussed above.


\section{The Brauer tree of an $m$-angulation of a disc}
\label{sec:m_angulations}

In this section we restrict the surface we consider to the disc with $n$ marked points on the boundary.
Here we allow
$m$-angulations (or even angulations, in general) and not only triangulations of
the disc. Given an $m$-angulation of the disc we define a dual graph of the $m$-angulation. We then
give an explicit tilting complex of the
Brauer graph algebra of this dual graph realizing the change of graph induced by
a Kauer move or mutation of one of the diagonals of the $m$-angulation of the disc.

Let $P_n$ be a polygon with $n$ vertices. For $n =k(m-2)+2$ for some $k,m \in
\mathbb{Z}^+$, an \emph{$m$-angulation} $\M$ of $P_n$ is a collection of arcs
joining the vertices of $P_n$ and dividing it up into $m$-gons. The set of edges
of $P_n$ constitutes the set of \emph{boundary edges} of $\M$. We call an
\emph{internal arc} of $\M$ any arc that is not a boundary edge.

Regarding $\M$ as a (locally) embedded graph in the plane, we have seen that it can
be regarded as a Brauer graph by choosing an orientation of the plane which then
induces an ordering of the edges around each vertex.  As discussed in the
previous sections, unless otherwise stated we choose the clockwise orientation.

With this set-up there is a corresponding Brauer graph algebra which we denote
by $A_{\M}$. Note that $A_{\M}$ has multiplicity function $m \equiv 1$.

Given an $m$-angulation $\M$ and an internal arc $a$, there is a new
$m$-angulation $\mu_a(\M) = \M'$ obtained in the following way.  Removing $a$
leaves a $2m-2$-gon $H$ which had $a$ as one of its diagonals. Then $\M'$ is
obtained by rotating $a$ one-step clockwise within $H$.  This move is a
generalization of the the flip in previous section and it is generally known in
cluster theory as a \emph{mutation} at $a$ (in the $(m-2)$-cluster sense;~\cite{HughThomas07} (see also~\cite[\S11]{BuanThomas09},~\cite{FominReading05}).

The following is easy to check.

\begin{lemma}\label{Kauermutation}
Let $\M$ be an $m$-angulation of $P_n$ and $a$ an internal arc of $\M$. Then
mutating $\M$ at $a$ (in the way described above) coincides with applying the
Kauer move to $\M$ at $a$.
\end{lemma}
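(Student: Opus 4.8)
The plan is to reduce the claim to a purely local comparison of two combinatorial operations on the Brauer graph $\M$, namely the $m$-angulation mutation $\mu_a$ and the Kauer move, both performed at the internal arc $a$. Since both operations only modify $\M$ in a neighbourhood of $a$ and leave the rest of the graph (and its cyclic orderings) untouched, it suffices to check that they produce the same graph with the same cyclic ordering at every affected vertex. First I would set up notation: write $H$ for the $(2m-2)$-gon obtained by deleting $a$ from $\M$, let $X$ and $Z$ be the two endpoints of $a$ in $H$, and label the two edges of $H$ that are the clockwise successors of $a$ at $X$ and at $Z$ respectively, exactly as in the Kauer move picture (Figure~\ref{fig:KauerMove}). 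In $\M$, the arc $a$ together with these successor edges plays the role of the edge $a$ and the adjacent edges $b$ (at one end) and $c$ (at the other end) in Theorem~\ref{thm:KauerMove}.

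Next I would describe the effect of $\mu_a$ on endpoints and successors. Rotating $a$ one step clockwise inside $H$ moves one endpoint of $a$ to the next vertex of $H$ along one of these successor edges, and correspondingly changes which edges are the clockwise successors of the new arc $a'$ at its two endpoints. The key point to extract is: the new arc $a'$ has endpoints obtained by sliding along the old successor edges, and its successor at each new endpoint is the edge that was previously two steps around from $a$ in the cyclic ordering (equivalently, the successor of the old successor edge). This is precisely the recipe that defines the Kauer move at $a$: the Kauer move detaches $a$ from its current position and re-attaches it ``one notch over'' along the successor edges, with the cyclic orderings updated in exactly this way. So the core of the proof is matching these two descriptions edge-for-edge and successor-for-successor.

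The cleanest way to carry this out is to invoke the work already done in Section~\ref{s:surfaces}: Lemma~\ref{l:successors} identifies the clockwise successors of an arc in a triangle with the adjacent sides, and Proposition~\ref{pro:Kauerflip} proves exactly the statement we want in the special case $m=3$ (where $\mu_a$ is the flip). For general $m$, one observes that the deleted arc $a$ together with the relevant edges of $H$ locally looks the same as in the triangulated case: the rotation $\mu_a$ inside $H$ interacts with the cyclic ordering at the two endpoints of $a$ in precisely the manner captured by parts (a) and (b) of Lemma~\ref{l:successors} (the extra edges of the $m$-gons beyond the two successor edges are not moved and retain their position in the cyclic order). Hence the same argument as in Proposition~\ref{pro:Kauerflip}, comparing the combinatorial output of $\mu_a$ with the combinatorial definition of the Kauer move in Theorem~\ref{thm:KauerMove}, goes through verbatim.

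The main obstacle, such as it is, is bookkeeping rather than mathematics: one must be careful about the orientation conventions (clockwise versus counterclockwise rotation inside $H$) and about the degenerate configurations — when some endpoints of edges of $H$ are identified, when $a$ is a loop, or when the two $m$-gons meeting along $a$ share further edges — to be sure the local picture really is an instance of the Kauer move in all cases. This is handled exactly as the folding cases $A_1$--$A_4$ are handled in the proof of Lemma~\ref{l:successors}: since $\mu_a$ only rotates an arc and never alters the identifications of vertices or the cyclic orderings away from the two endpoints of $a$, no successor relation outside the immediate neighbourhood of $a$ is affected, and the degenerate cases reduce to the generic one. With these checks in place the lemma follows.
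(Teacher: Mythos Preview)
The paper does not actually give a proof of this lemma; it states only that the result ``is easy to check.'' Your argument is correct and supplies the missing verification, though it is more elaborate than the situation requires. Because $\M$ is an $m$-angulation of a convex polygon, every arc joins two distinct boundary vertices and the two $m$-gons on either side of $a$ meet only along $a$; consequently none of the folded configurations $A_2$--$A_4$, loops, or shared-edge degeneracies you discuss can arise, and that part of your bookkeeping is vacuous here. The substantive content of your argument---that rotating $a$ one step clockwise inside the $(2m-2)$-gon $H$ moves each endpoint of $a$ along its clockwise-successor edge in $\M$, which is precisely the effect of the Kauer move at $a$---is exactly the direct local check the paper intends.
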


We also observe the following:

\begin{lemma}\label{l:mcasequiver}
Let $\M$ be an $m$-angulation of $P_n$ and $a$ an internal arc of $\M$.  Then,
regarding $\M$ as a Brauer graph (with an appropriate orientation of $P_n$), the
full subquiver of $A_{\M}$ on the vertices corresponding to non-boundary arcs
coincides with the quiver of the corresponding $(m-2)$-cluster-tilted algebra as
given by~\cite[Prop.\ 4.1.8]{Murphy}.
\end{lemma}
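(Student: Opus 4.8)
The plan is to compare the two quivers vertex-by-vertex and arrow-by-arrow. On the one hand, by construction the vertices of $Q_{\M}$ are the arcs of $\M$, and deleting the boundary arcs leaves exactly the internal arcs of $\M$; on the other hand, the quiver of the $(m-2)$-cluster-tilted algebra in~\cite[Prop.\ 4.1.8]{Murphy} has vertices the diagonals (internal arcs) of the $m$-angulation, so the two vertex sets are canonically identified. It therefore remains to match the arrows between pairs of internal arcs.

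First I would recall explicitly, for each of the possible local configurations of two internal arcs inside a single $m$-gon of $\M$, which arrows of $Q_{\M}$ join the corresponding vertices. By the definition of $Q_{\G}$ in Section~\ref{section:notation}, an arrow $v_a\to v_b$ exists precisely when $b$ is the cyclic successor of $a$ around some common endpoint; since we have chosen the clockwise orientation of $P_n$, this means: whenever two internal arcs $a,b$ share a vertex $X$ and $b$ immediately follows $a$ in the clockwise order of arcs at $X$ (possibly with boundary edges of $P_n$ interposed being skipped only when $a,b$ are genuinely adjacent), there is an arrow $v_a\to v_b$. Translating this into the polygon picture, one sees that within each $m$-gon the internal arcs, read in clockwise cyclic order as the sides/diagonals of that $m$-gon, form an oriented cycle in $Q_{\M}$ restricted to non-boundary vertices, with boundary sides simply omitted from the cycle. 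Next I would quote the description of the $(m-2)$-cluster-tilted quiver from~\cite[Prop.\ 4.1.8]{Murphy}: for each $m$-gon of $\M$, its internal sides (diagonals) likewise contribute an oriented cycle through the corresponding vertices, and there are no other arrows. Comparing the two local recipes $m$-gon by $m$-gon then gives the equality of quivers, since $\M$ is a disjoint-gluing of its $m$-gons along arcs and both constructions are purely local to each $m$-gon.

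The one genuine subtlety — and the step I expect to be the main obstacle — is bookkeeping the boundary edges of $P_n$ correctly when forming cyclic successors at a vertex $X$ on the boundary of the polygon. At such a vertex the clockwise cyclic order around $X$ in the Brauer graph $\M$ interleaves internal arcs with the (one or two) boundary edges of $P_n$ incident to $X$, so one must check that deleting the boundary vertices of $Q_{\M}$ (equivalently, contracting the boundary edges out of the cyclic order) really does produce, at each interior corner of each $m$-gon, the single arrow predicted by~\cite{Murphy}, and in particular that no spurious arrows survive and none are lost where two sides of the same $m$-gon happen to meet at a marked point on the boundary. I would handle this by a short case analysis of the position of $X$ relative to the $m$-gons meeting at it, using that $P_n$ is a disc so that the arcs at $X$ are linearly (not just cyclically) ordered between the two boundary edges at $X$; this linear order is exactly the order in which the $m$-gons are stacked at $X$, and within each $m$-gon there is at most one ``corner'' at $X$, which yields exactly one arrow. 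Once this is verified, the lemma follows, and I would end with the remark that the choice of orientation of $P_n$ referred to in the statement is precisely the clockwise one fixed throughout, matching the convention of~\cite{Murphy}.
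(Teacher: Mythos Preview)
Your proposal is correct and follows the only natural approach---a direct comparison of the two quiver descriptions, reduced to a local check at each corner of each $m$-gon---which is precisely what the paper (implicitly) does: the authors present this lemma without proof, introducing it simply as ``We also observe the following.'' One small caution on wording: saying the internal sides of an $m$-gon ``form an oriented cycle \ldots\ with boundary sides simply omitted'' is slightly loose, since deleting the boundary vertices from the $m$-gon's cycle in $Q_{\M}$ leaves a union of oriented paths (one for each maximal run of consecutive diagonal sides), not a cycle; but your subsequent case analysis at boundary vertices makes clear you have the correct picture, and this is exactly what Murphy's description gives as well.
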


\begin{corollary}
Let $\M_1$ and $\M_2$ be two $m$-angulations of $P_n$. Then $A_{\M_1}$ and
$A_{\M_2}$ are derived equivalent.
\end{corollary}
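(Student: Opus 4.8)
The plan is to reduce the statement to the connectivity of the "exchange graph" of $m$-angulations of $P_n$ together with the Kauer/mutation correspondence already established. First I would invoke Lemma~\ref{Kauermutation}: for an $m$-angulation $\M$ and an internal arc $a$, the mutation $\mu_a(\M)$ is obtained from $\M$ by a Kauer move at $a$. Hence, by Theorem~\ref{thm:KauerMove}, the Brauer graph algebras $A_{\M}$ and $A_{\mu_a(\M)}$ are derived equivalent. So it suffices to show that any two $m$-angulations $\M_1$ and $\M_2$ of $P_n$ are connected by a finite sequence of such mutations at internal arcs, since derived equivalence is transitive.

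For the connectivity step I would appeal to the known fact that the graph on $m$-angulations of a polygon, with edges given by single mutations (rotations of a diagonal inside the surrounding $(2m-2)$-gon), is connected; this is precisely the $1$-skeleton of the generalized associahedron / cyclohedron-type complex studied in the cluster-combinatorial literature on $(m-2)$-clusters, e.g.~\cite{HughThomas07} (see also~\cite[\S11]{BuanThomas09},~\cite{FominReading05}). Concretely, one can argue by induction on $n$: fix a distinguished internal arc $\gamma$ that must appear in some $m$-angulation; show any $\M$ can be mutated to one containing $\gamma$ (by repeatedly rotating arcs that cross $\gamma$ toward $\gamma$, decreasing the total crossing number with $\gamma$), and then cutting $P_n$ along $\gamma$ into two smaller polygons $P_{n_1}$ and $P_{n_2}$ with $n_1, n_2 < n$ and $n_i \equiv 2 \pmod{m-2}$, to which the inductive hypothesis applies. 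The base case is the $m$-gon itself, which has a unique $m$-angulation (the empty one).

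Putting the pieces together: given $\M_1$ and $\M_2$, connect each of them to a fixed reference $m$-angulation $\M_0$ by a sequence of mutations at internal arcs; at each step the corresponding Brauer graph algebras are derived equivalent by Lemma~\ref{Kauermutation} and Theorem~\ref{thm:KauerMove}; composing the derived equivalences along the paths $\M_1 \to \M_0 \to \M_2$ gives that $A_{\M_1}$ and $A_{\M_2}$ are derived equivalent. Note this subsumes Corollary~\ref{cor:surfaceinvariant} in the disc/triangulation case ($m=3$) and, since all the $A_{\M_i}$ have multiplicity function identically $1$, no exceptional-vertex subtleties arise in applying Kauer's theorem.

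The main obstacle I anticipate is the connectivity of the mutation graph of $m$-angulations: while it is "well known" in cluster theory, making the induction on $n$ fully rigorous requires care with the degenerate small polygons, with the claim that crossing numbers strictly decrease under a suitably chosen rotation, and with checking that the numerical condition $n = k(m-2)+2$ is inherited by the pieces after cutting along an internal arc. Everything else is a routine assembly: Lemma~\ref{Kauermutation} does the geometric identification of mutation with the Kauer move, Theorem~\ref{thm:KauerMove} supplies the derived equivalence for a single step, and transitivity of derived equivalence finishes the argument. If one prefers to avoid re-proving connectivity, one can simply cite~\cite{HughThomas07,BuanThomas09,FominReading05} for it directly.
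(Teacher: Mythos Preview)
Your proposal is correct and follows essentially the same approach as the paper: invoke Lemma~\ref{Kauermutation} to identify mutation with the Kauer move, apply Theorem~\ref{thm:KauerMove} for the derived equivalence at each step, and appeal to connectivity of the mutation graph of $m$-angulations. The paper's own proof is just the terse version of this, citing~\cite[Prop.~7.1]{BuanThomas09} or~\cite[Prop.~4.5]{ZhouZhu09} (combined with~\cite[\S11]{BuanThomas09}) for connectivity rather than sketching an inductive argument; your inductive sketch is a reasonable alternative, though as you note the crossing-number-decrease step would need care, so the citation route is cleaner.
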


\begin{proof}
This follows from Lemma~\ref{Kauermutation} and the fact that any two
$m$-angulations are connected by a sequence of mutations (this can be seen
using~\cite[Prop.\ 7.1]{BuanThomas09} or~\cite[Prop.\ 4.5]{ZhouZhu09}, combined
with~\cite[\S11]{BuanThomas09}).
\end{proof}

We also consider the completed $m$-ary tree, $\MM$, corresponding to $\M$.  Its
interior vertices correspond to the $m$-gons in $\M$, with a leaf vertex
corresponding to each boundary edge. Two interior vertices are connected by an
edge if the corresponding $m$-gons share a common edge in $\M$, and an interior
vertex and a leaf vertex are connected by an edge if the boundary edge
corresponding to the leaf vertex forms part of the boundary of the $m$-gon
corresponding to the interior vertex.

\begin{remark} If we identify all of the leaf vertices in $\MM$,
we obtain the dual graph of $\M$.
\end{remark}

As for $\M$, the graph $\MM$, via its embedding in the plane, can be considered
as a Brauer graph.  So we again have a corresponding Brauer graph algebra,
$A_{\MM}$. Note that $\MM$ is in fact a tree and thus $A_{\MM}$ is a
Brauer tree algebra.

In a similar way, we may also consider $\MM$ without its leaf vertices (deleting
all incident edges also).  We call this non-boundary version $\M^*$.

\begin{remark}
If we adopt the rule for $\M$ that the successor of an edge is anticlockwise
from the edge, while in $\MM$ we use the usual (clockwise) rule then there is a
close correspondence between the quivers of $A_{\M}$ and $A_{\MM}$: the arrows
are the same except that $Q_{\M}$ has an extra cycle of arrows around its
boundary. If, however, we were to consider instead the dual graph of $\M$,
i.e.\ if all of the boundary vertices are identified, then the arrows in the quiver
corresponding to the dual graph would be the same as in $Q_{\M}$.  It will
become clear in Section~\ref{sec:counterexamples} why we choose to work with $\MM$
instead of the dual graph of $\M$.
\end{remark}

Given an internal edge $a$ of $\M$, denote by $a^*$ the unique edge of $\MM$
intersecting $a$.

\begin{definition}
Mutating $\M$ at an internal edge $a$, induces a corresponding move on $\MM$:
the first edge clockwise of $a^*$ at each end-point of $a^*$ is moved along
$a^*$ (together with the subtrees attached to the edges we are moving) so that
it becomes incident with the other end-point of $a^*$ instead.  We call this a
\emph{dual Kauer move} at $a^*$ and we denote the resulting tree by
$\mu^*_{a^*}(\MM)$.
\end{definition}

\begin{figure}[!h]
\psfragscanon \psfrag{es}{\begin{footnotesize}$a^*$\end{footnotesize}}
\begin{center}
\includegraphics[width=7cm]{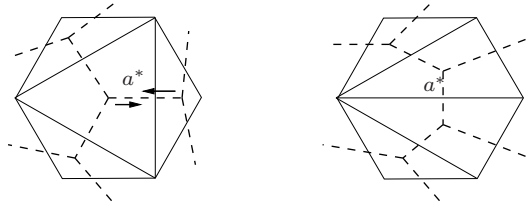} 
\end{center}
\caption{Example of a dual Kauer move at the edge $a^*$. The dotted lines in the
  left figure denote $\MM$ and the dotted lines in the right figure denote
  $\mu^*_{a^*}(\MM)$. }
\label{DualKauerMove}
\end{figure}

\begin{remark}
\begin{enumerate}
\item[(1)]
The dual Kauer move is known in graph theory under the name of
nearest neighbour interchange (NNI)~\cite[\S2]{watermansmith73}
or also as a Whitehead move.
\item[(2)]
Note that this rule does not apply to $\M^*$. For an example of this, see
Figure~\ref{NotDualKauer} where going from the left hand figure to the right
hand figure, the change of in $\M^*$ induced by a Kauer move in the
heptagon at the edge corresponding to $a^*$ is not a dual Kauer move.
But notice that going the other way, that is from the right hand
figure to the left hand one, if we mutate the edge corresponding to $a^*$ in $\M$
and then take the dual graph this does correspond to a dual Kauer move.
\end{enumerate}
\end{remark}

\begin{figure}[!h]
\psfragscanon \psfrag{es}{\begin{footnotesize}$a^*$\end{footnotesize}}
\begin{center}
\includegraphics[width=6cm]{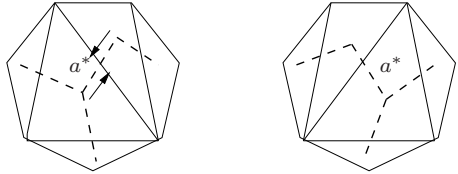} 
\end{center}
\caption{Mutating at $a$ in $\M$ does not induce a dual Kauer move on $\M^*$
(the latter indicated by the arrows in the left hand picture).}
\label{NotDualKauer}
\end{figure}

In the following we see that when working in the dual context of $\MM$, a dual Kauer move
also induces a derived equivalence.
Note that by~\cite{Rickard89} it is already known that the
two corresponding Brauer tree algebras are derived equivalent, since they both have the same
number of vertices and the same multiplicity function.  However, our interest here is in
constructing an explicit derived equivalence compatible with the geometry.

In general, for a finite dimensional algebra $A$, let $\P (A)$ denote the
category of finitely generated projective $A$-modules and let $\K^{\flat}(\P(A))$
denote the bounded homotopy category of complexes of projective $A$-modules.
Recall that an object $T$ in $\K^{\flat}(\P(A))$ is said to be a
\emph{tilting complex} provided:
\begin{enumerate}[(i)]
\item $\Hom _{\K^{\flat}(\P(A))} (T, T[n]) = 0$ for all $n \neq 0$, and
\item $T$ generates $\K^{\flat}(\P(A))$ as a triangulated category.
\end{enumerate}

Following Rickard~\cite{Rickard89Morita}, in order to show that two
two finite dimensional algebras $A$ and $A'$ are derived equivalent it is
enough to show that there is a tilting complex $T$ in $\K^{\flat}(\P(A))$
such that $A'$ is isomorphic to $\End _{K^{\flat}(\P(A))} (T) ^{\rm opp}$.

Consider a dual Kauer move at an edge $a$ in $\MM$.
Let $X$ and $Y$ denote the two vertices in $\MM$ incident with $a$.
We label the edges near $a$ as follows.
In clockwise orientation around $X$ we have the following edges:
$a, b, d_1, \dots, d_{m-2}$ after which we return to $a$. Similarly,
around $Y$, we have $a, c, e_1, \dots, e_{m-2}$, after which we return to $a$.
We call this \emph{Configuration I}, see figure~\ref{fig: Configuration 1}.

\begin{figure}[h!]
\psfragscanon \psfrag{a}{$\scriptstyle a$} \psfrag{b}{$\scriptstyle b$}
\psfrag{c}{$\scriptstyle c$} \psfrag{d1}{$\scriptstyle d_1$}
\psfrag{e1}{$\scriptstyle e_1$} \psfrag{dm-2}{$\scriptstyle d_{m-2}$}
\psfrag{em-2}{$\scriptstyle e_{m-2}$} \psfrag{h}{$\scriptstyle h$}
\psfrag{i}{$\scriptstyle i$} \psfrag{T}{$ \T^\vee$} \psfrag{U}{$
  \mu^*_{a^*}(\T^\vee)$}
\begin{center}
\includegraphics[width=5cm]{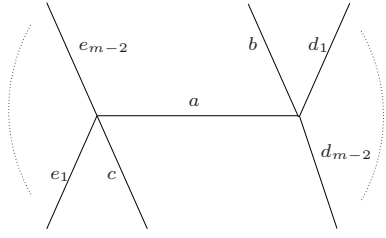}
\end{center}
\caption{Configuration I: Local configuration around the edge $a$.}
\label{fig: Configuration 1}
\end{figure}

 After a dual Kauer move at $a$
(recall that this corresponds to mutating the only edge of the $m$-angulation
of the polygon that
the edge $a$ intersects and then taking our version of the dual graph),
we have the following configuration, which we call
\emph{Configuration II}: around $X$ we have $a, d_1, \dots, d_{m-2}, c$
and then we return to $a$ and around $Y$ we have $a, e_1, \dots, e_{m-2}, b$
and then we return to $a$. See also figure~\ref{fig: Configuration 2}.

\begin{figure}[h!]
\psfragscanon \psfrag{a}{$\scriptstyle a$} \psfrag{b}{$\scriptstyle b$}
\psfrag{c}{$\scriptstyle c$} \psfrag{d1}{$\scriptstyle d_1$}
\psfrag{e1}{$\scriptstyle e_1$} \psfrag{dm-2}{$\scriptstyle d_{m-2}$}
\psfrag{em-2}{$\scriptstyle e_{m-2}$} \psfrag{h}{$\scriptstyle h$}
\psfrag{i}{$\scriptstyle i$} \psfrag{T}{$ \T^\vee$} \psfrag{U}{$
  \mu^*_{a^*}(\T^\vee)$}
\begin{center}
\includegraphics[width=5cm]{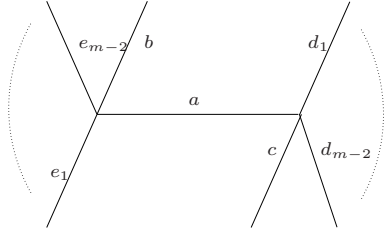}
\end{center}
\caption{Configuration II: Local configuration around the edge $a$ after application of dual Kauer move at edge $a$.}
\label{fig: Configuration 2}
\end{figure}

Let $I$ be the set of edges of $\MM$.
For $i=b, d_1, \ldots, d_{m-2}, c, e_1, \ldots, e_{m-2}$, let $G_i$ be the
subtree of $\MM$ at the vertex of the edge $i$ which is not incident with
$a$, together with $i$ itself.

Define an object $T= \bigoplus_{i \in I} T_i$ in $\K^{\flat}(\P(A_{\MM}))$ as
follows:

$$\begin{array}{lll} T_a & := & 0 \rightarrow P_a; \\ T_b & := & P_b \rightarrow
  P_a; \\ T_c & := & P_c \rightarrow P_a; \\ T_i & := & \left\{ \begin{array}{ll}
    P_i \rightarrow 0, & \mbox{if $i \in G_b \setminus \{b\}$ or $i \in G_c
      \setminus \{c\}$; }\\ 0 \rightarrow P_i, & \mbox{if $i \in G_{d_j}$ or $i \in
      G_{e_j}$ for some $j$,} \\
\end{array} \right. \\
\end{array}
$$
where $P_s$ denotes the projective indecomposable module at vertex $v_s$ in the
quiver of $A_{\MM}$ (which corresponds to the edge $s$ in $\MM$).
For an object $X$ in an additive category, we denote by $\add(X)$ the
full subcategory consisting of direct summands of finite direct sums of copies
of $X$.
\begin{remark} \label{rem:inducedequivalence}
\begin{enumerate}[(a)]
\item Let $A'=End_{K^{\flat}(\P(A_{\MM}))}(T)$.
For $i\in I$, let $P'_i=\Hom_{K^{\flat}(\P(A_{\MM}))}(T,T_i)$ be an indecomposable
projective $A'$-module and let $P' = \bigoplus _{i \in I} P'_i$.
Then the functor $\Hom_{K^{\flat}(\P(A_{\MM}))}(T, -)$ induces an equivalence
of categories $\add(T) \rightarrow \add(P')$.
\item
We note that according to our conventions the arrows in the quiver of $A_{\MM}$
correspond to the clockwise orientation in $\MM$, so that homomorphisms
between the projective indecomposables go anticlockwise to the orientation in
$\MM$.
\end{enumerate}
\end{remark}

The fact that $T = \bigoplus _{i \in I} T_i$ in $K^{\flat}((P(A_{\MM}))$ is a
tilting complex is easily verified and also follows directly from the fact that $T$ is an Okuyama-Rickard complex
\cite{Okuyama}.
 It then follows
from~\cite{Rickard89Morita} that $A_{\MM}$ and $A' = \End _{K^{\flat}((P(A_{\MM}))} (T)^{\rm opp}$ are derived equivalent.

We now show that $A'$ is the Brauer tree algebra obtained from $A$ through a dual Kauer move at the edge $a$.

\begin{lemma}\label{Brauertree}
$A'$ is a Brauer tree algebra with no exceptional vertex.
\end{lemma}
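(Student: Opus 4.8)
The plan is to compute the algebra $A' = \End_{\K^{\flat}(\P(A_{\MM}))}(T)^{\op}$ directly and identify its quiver and relations with those of the Brauer tree algebra of $\mu^*_{a^*}(\MM)$. By Remark~\ref{rem:inducedequivalence}(a), the indecomposable projective $A'$-modules are the $P'_i = \Hom_{\K^{\flat}(\P(A_{\MM}))}(T,T_i)$ for $i \in I$, so the vertices of the quiver of $A'$ are again indexed by the edges of $\MM$ (equivalently of $\mu^*_{a^*}(\MM)$, since a dual Kauer move does not change the edge set). The morphisms in $\K^{\flat}(\P(A_{\MM}))$ between the two-term complexes $T_i$ are computed up to homotopy; since every $T_i$ has terms only in degrees $0$ and $-1$ (or is concentrated in one degree), the Hom-complexes are easy to describe and one reads off $\Hom_{\K^{\flat}(\P(A_{\MM}))}(T,T[n]) = 0$ for $n \neq 0$ as already noted, and computes the degree-$0$ part explicitly.

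First I would split the edge set $I$ into the three regions: the ``frozen-away'' edges in $G_b \setminus \{b\}$ and $G_c \setminus \{c\}$ (where $T_i = P_i \to 0$), the edges in the $G_{d_j}$ and $G_{e_j}$ (where $T_i = 0 \to P_i$), and the three special edges $a,b,c$ near the mutated edge. Away from the region near $a$, morphisms between the $T_i$ of the same ``shape'' are just morphisms between the corresponding $P_i$, so the full subquiver of $A'$ on each of these two large regions is unchanged from $A_{\MM}$ — this reflects that the subtrees $G_i$ are carried along unchanged by the dual Kauer move. The real computation is local: I would determine the arrows and relations of $A'$ involving the vertices $v_a, v_b, v_c$ and the first edges $d_1, e_1, d_{m-2}, e_{m-2}$ of the moved subtrees, using Remark~\ref{rem:inducedequivalence}(b) (homomorphisms between projective indecomposables run anticlockwise around $\MM$) together with the explicit description of $Q_{\MM}$ and $I_{\MM}$ from Section~\ref{section:notation}. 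The key point is that, by the definition of the tilting summands $T_b = P_b \to P_a$ and $T_c = P_c \to P_a$, the new ``successor'' structure at the vertices $X$ and $Y$ gets permuted exactly according to the passage from Configuration~I to Configuration~II: the edge $b$ becomes a neighbour of $Y$ and $c$ becomes a neighbour of $X$, with the $d_j$ and $e_j$ staying put, which is precisely the combinatorics of $\mu^*_{a^*}(\MM)$.

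Having matched quivers, I would then check that $A'$ satisfies the three types of Brauer-graph relations (Type I cycle relations $C_{a,X}^{m(X)} - C_{a,Y}^{m(Y)}$, Type II leaf relations, Type III length-two zero relations) for the tree $\mu^*_{a^*}(\MM)$ with multiplicity function identically $1$; since $\mu^*_{a^*}(\MM)$ is a tree this is a Brauer tree algebra with no exceptional vertex, which is the claim. For the relations I would compose the relevant morphisms of complexes and track which composites of basis paths become null-homotopic — the relations of $A'$ are forced by the homotopies available in $\K^{\flat}(\P(A_{\MM}))$, and one verifies the length-two paths that are not subpaths of any new cycle $C_{a,X}$ vanish, while the new oriented cycles at $X$ and $Y$ coincide up to the Type~I relation. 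The main obstacle is the bookkeeping in the local computation around the edge $a$: one must carefully handle the cases $m = 2$, $m = 3$ and $m \geq 4$ (when some of the lists $d_1,\dots,d_{m-2}$ or $e_1,\dots,e_{m-2}$ are empty or singletons), and keep straight the anticlockwise-versus-clockwise conventions for arrows in $Q_{\MM}$; once the arrows at $v_a, v_b, v_c$ are pinned down, verifying the Brauer relations is routine. An alternative, lighter route — which I would mention as a cross-check — is to invoke the fact (noted in the excerpt, via~\cite{Rickard89} and~\cite{Okuyama}) that $T$ is an Okuyama--Rickard complex, so $A'$ is automatically a symmetric special biserial algebra derived equivalent to $A_{\MM}$ with the same Cartan data; combined with the local quiver computation this already forces $A'$ to be the Brauer tree algebra of $\mu^*_{a^*}(\MM)$.
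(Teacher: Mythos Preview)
Your proposal takes a genuinely different route from the paper. The paper's proof of this lemma is entirely abstract and does not compute $A'$ at all: since $A = A_{\MM}$ is symmetric, so is $A'$ (by Rickard); since $A$ is derived equivalent to the Brauer star algebra $A''$ with multiplicity~$1$ (\cite[4.2]{Rickard89}), so is $A'$; derived equivalent symmetric algebras are stably equivalent (\cite[2.2]{Rickard89}); and finally \cite[X,~3.14]{AuslanderReitenSmalo} says that any algebra stably equivalent to a symmetric Nakayama algebra is a Brauer tree algebra. Hence $A'$ is a Brauer tree algebra, with multiplicity~$1$ again by~\cite[4.2]{Rickard89}.

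Your direct computation instead aims at the stronger Theorem~\ref{thm:flipderivedequivalent} (that $A' \cong A_{\mu^*_{a}(\MM)}$) in one step. This is legitimate, but the paper's two-stage strategy has a real payoff: once Lemma~\ref{Brauertree} guarantees abstractly that $A'$ is \emph{some} Brauer tree algebra with multiplicity~$1$, identifying \emph{which} tree (via Lemma~\ref{edges} and the proof of Theorem~\ref{thm:flipderivedequivalent}) only requires exhibiting the two nonzero loops of morphisms through each $T_i$ and checking that the remaining Hom-spaces vanish --- one never needs to verify the Brauer relations of Types~I--III directly, nor to argue that the morphisms found are irreducible (i.e.\ give arrows rather than longer paths), nor to carry out a dimension count. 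Your approach must do all of these: in particular, you would still need to show that the Type~I--III relations you verify actually \emph{generate} the ideal of relations of $A'$ (for instance via a Cartan-matrix or dimension argument), a step you gesture at but do not spell out.
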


\begin{proof}
Firstly, note that $A$ is symmetric, as it is a Brauer tree
algebra. Rickard~\cite{Rickard89} states that if a symmetric finite dimensional
algebra $\Lambda$ is derived equivalent to another finite dimensional algebra
$\Lambda'$, then $\Lambda'$ is also symmetric. Hence, in our case, $A'$ is also
symmetric.
Next,~\cite[4.2]{Rickard89} states that a Brauer tree algebra is determined up
to derived equivalence by its exceptional vertex multiplicity and its number of
edges.  Hence $A$ is derived equivalent to the Brauer tree algebra $A''$
corresponding to a star with multiplicity 1. Thus $A'$ is derived equivalent to
$A''$. By~\cite[2.2]{Rickard89} if two finite dimensional symmetric algebras
$A'$ and $B'$ are derived equivalent then they are stably equivalent. Hence $A'$
and $A''$ are stably equivalent.  According to \cite[X,
  3.14]{AuslanderReitenSmalo} if two finite dimensional algebras $\Lambda$ and
$\Lambda'$ are stably equivalent and if $\Lambda$ is a symmetric Nakayama Brauer
tree algebra (\emph{e.g.} a star) then $\Lambda'$ is a Brauer tree
algebra. Hence $A'$ is a Brauer tree algebra and by~\cite[4.2]{Rickard89} it has
multiplicity function equal to 1.
\end{proof}

\begin{theorem}\label{thm:flipderivedequivalent}
There is an isomorphism of algebras from $A' = \End_{K^{\flat}(P(A_{\MM}))}(T)^{\rm opp}$ to
$A_{\overline{\mathcal{N}}^*}$, where $\overline{\mathcal{N}}^* = \mu^*_a(\MM)$, that is
$\overline{\mathcal{N}}^*$ is obtained from $\MM$ by a dual Kauer move on the edge $a$.
Hence, $A_{\overline{\mathcal{M}}^*}$ and $A_{\overline{\mathcal{N}}^*}$ are
derived equivalent.
\end{theorem}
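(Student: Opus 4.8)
The plan is to prove Theorem~\ref{thm:flipderivedequivalent} by explicitly computing the endomorphism algebra $A' = \End_{\K^{\flat}(\P(A_{\MM}))}(T)^{\op}$ and matching it, arrow by arrow and relation by relation, with the Brauer graph algebra $A_{\NN}$ where $\NN = \mu^*_{a^*}(\MM)$. By Lemma~\ref{Brauertree} we already know $A'$ is a Brauer tree algebra with trivial multiplicity function, and by Remark~\ref{rem:inducedequivalence} we know its indecomposable projectives $P'_i = \Hom_{\K^{\flat}(\P(A_{\MM}))}(T,T_i)$ are indexed by the edge set $I$ of $\MM$, which is also the edge set of $\NN$. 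So the skeleton of the argument is: (1) identify the quiver of $A'$ by computing $\Hom$-spaces and compositions between the $T_i$ in the homotopy category; (2) check these match the quiver $Q_{\NN}$ read off from Configurations I and II; (3) verify the Brauer-tree relations (type I cycle relations and type III zero-relations) coincide. Since both algebras are Brauer tree algebras on the same tree-with-multiplicity data once the cyclic orderings match, it actually suffices to check that the cyclic ordering of edges around each vertex of $\NN$ is reproduced by the successor structure of the arrows of $A'$.

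Concretely, I would organise the computation around the local picture near the moved edge $a$, since $T$ is the identity (up to shift) away from the summands $T_a, T_b, T_c$ and the subtrees $G_i$. First I would record the maps in $\K^{\flat}(\P(A_{\MM}))$: there are obvious chain maps $T_b \to T_a$ and $T_c \to T_a$ (the components $P_a \xrightarrow{\id} P_a$ in degree $0$), and maps $T_a \to T_i$ and $T_i \to T_a$-type morphisms coming from the arrows of $Q_{\MM}$ between $v_a$ and the neighbours $v_b, v_c, v_{d_j}, v_{e_j}$, interpreted as maps of the two-term complexes. Using Remark~\ref{rem:inducedequivalence}(b), homomorphisms between projective indecomposables run \emph{anticlockwise} relative to the orientation of $\MM$, so the arrow $v_a \to v_b$ in $Q_{\MM}$ (clockwise successor $b$ of $a$ at $X$) gives rise to a map $P_b \to P_a$, etc. I would then compute which of the ``naive'' composites vanish in the homotopy category (null-homotopic) and which genuinely survive: the type III relations of $A_{\MM}$ force most length-two composites through $v_a$ to be zero, leaving exactly the arrows needed to realise Configuration~II. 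The key points to extract are: around the image of $X$ in $A'$, the arrow out of $v_a$ should now land on $v_{d_1}$ (not $v_b$), the former $b$-subtree gets re-attached after $d_{m-2}$, and symmetrically at $Y$; this is precisely the re-ordering $a, d_1,\dots,d_{m-2}, c$ and $a, e_1,\dots, e_{m-2}, b$ defining Configuration~II.

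After the quiver of $A'$ is pinned down, the relations essentially come for free: $A'$ is symmetric (Lemma~\ref{Brauertree}, via Rickard), it is a Brauer tree algebra with multiplicity one, and a Brauer tree algebra is determined by its Brauer tree with cyclic ordering; since we have matched the cyclic orderings around every vertex with those of $\NN$ (vertices away from $a$ are untouched, and the two vertices $X, Y$ incident with $a$ have the orderings of Configuration~II, which is exactly the dual Kauer move), we conclude $A' \cong A_{\NN}$ as algebras. The final sentence of the theorem, namely that $A_{\MM}$ and $A_{\NN}$ are derived equivalent, then follows by combining the isomorphism $A' \cong A_{\NN}$ with the already-noted fact (Rickard~\cite{Rickard89Morita}, using that $T$ is a tilting complex, itself an Okuyama--Rickard complex~\cite{Okuyama}) that $A_{\MM}$ and $A'$ are derived equivalent.

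The main obstacle I anticipate is step (1): carefully computing the $\Hom$-spaces $\Hom_{\K^{\flat}(\P(A_{\MM}))}(T_i, T_j)$ and their compositions, in particular getting the orientation conventions consistent (Remark~\ref{rem:inducedequivalence}(b)) and correctly identifying which chain maps are null-homotopic. The bookkeeping is delicate because the three ``special'' summands $T_a, T_b, T_c$ are genuine two-term complexes while the $T_i$ for $i$ in the subtrees are just shifted stalk complexes, so morphisms between them come in several qualitatively different flavours (stalk-to-stalk, stalk-to-two-term in various degrees, two-term-to-two-term), and one must track the connecting arrows through the vertex $v_a$ to see how the subtrees $G_b, G_c$ swap their attachment points. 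Once these local $\Hom$/composition computations are done honestly, matching with $\NN$ is routine since both sides are Brauer tree algebras on the same combinatorial datum.
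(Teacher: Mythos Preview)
Your overall strategy is correct and matches the paper's: use Lemma~\ref{Brauertree} to know $A'$ is a Brauer tree algebra with trivial multiplicity, then determine its tree (with cyclic orderings) by computing morphisms among the $T_i$, and conclude since a Brauer tree algebra is determined by this data. The derived equivalence then follows from Rickard as you say.

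The one organisational difference worth noting is how step~(1) is carried out. You plan to compute the quiver of $A'$ arrow by arrow, worrying (rightly) about the bookkeeping of null-homotopies and orientation conventions. The paper instead proves a small auxiliary lemma (Lemma~\ref{edges}): for each $T_i$, it suffices to exhibit two non-zero \emph{loops} of morphisms
\[
T_i \to T_{i_1} \to \cdots \to T_{i_r} \to T_i
\quad\text{and}\quad
T_i \to T_{j_1} \to \cdots \to T_{j_s} \to T_i
\]
and to check $\Hom(T_i,T_k)=0$ for all $k$ not appearing in either loop; from this, the cyclic ordering of edges around both endpoints of $i$ in the Brauer tree of $A'$ is forced. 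This repackaging avoids having to identify individual arrows or analyse null-homotopies case by case: one only needs to write down the obvious composite cycle around each vertex, check it is non-zero, and check a list of Hom-vanishings (most of which are immediate from the shape of the complexes or from $\Hom_{A_{\MM}}(P_a,P_j)=0$). Your approach would work, but the loop criterion is cleaner and sidesteps exactly the bookkeeping you flag as the main obstacle.
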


\begin{lemma}\label{edges}
With the notation and the set-up of Theorem~\ref{thm:flipderivedequivalent}, suppose
for a summand $T_i$ of $T$ we find two non-zero loops of morphisms
\begin{enumerate}[(a)]
\item $T_i \rightarrow T_{i_1} \rightarrow T_{i_2} \rightarrow \dots \rightarrow
T_{i_r} \rightarrow T_i$;
\item $T_i \rightarrow T_{j_1} \rightarrow T_{j_2} \rightarrow \dots \rightarrow
T_{j_s} \rightarrow T_i$
\end{enumerate}
and show that \\
(c) $\Hom _{K^{\flat}(P(A_{\MM}))} (T_i, T_k) = 0$ for all $k$ such that $T_k$
is not one of the components in the loops (a) and (b) above.

Then, the edges incident with the end-points of $i$ in the Brauer graph of $A'$
must be, in the clockwise orientation, $i, i_1, \dots, i_r,i_1,i$ on one vertex of
$i$ and $i,j_1, \dots, j_s,i$ on the other vertex of $i$.
\end{lemma}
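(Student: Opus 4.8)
The plan is to reconstruct the Brauer graph of $A'$ purely from the combinatorics of $\Hom$-spaces in $\K^{\flat}(\P(A_{\MM}))$ between the summands $T_k$ of $T$. Recall that, by Remark~\ref{rem:inducedequivalence}(a), the functor $\Hom_{\K^{\flat}(\P(A_{\MM}))}(T,-)$ carries $\add(T)$ onto $\add(P')$, where $P'_k=\Hom_{\K^{\flat}(\P(A_{\MM}))}(T,T_k)$ is the indecomposable projective $A'$-module attached to the vertex $v_k$ of the quiver of $A'$. Hence for all $k,\ell$ we have $\Hom_{A'}(P'_k,P'_\ell)\cong \Hom_{\K^{\flat}(\P(A_{\MM}))}(T_k,T_\ell)$, and these isomorphisms are compatible with composition. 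So the hypotheses (a), (b), (c) translate directly into statements about maps of projectives over $A'$: there are two cyclic chains of maps $P'_i\to P'_{i_1}\to\cdots\to P'_{i_r}\to P'_i$ and $P'_i\to P'_{j_1}\to\cdots\to P'_{j_s}\to P'_i$, and $\Hom_{A'}(P'_i,P'_k)=0$ for every $k$ not occurring among the $i_t$, $j_u$ or $i$ itself.

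The first step is to translate the vanishing condition (c) into a statement about the radical series of $P'_i$. Since $A'$ is (by Lemma~\ref{Brauertree}) a Brauer tree algebra with multiplicity one, hence a symmetric special biserial algebra, each indecomposable projective $P'_i$ is biserial: its radical is the sum of two uniserial modules $U_X$, $U_Y$ (one for each end-point $X,Y$ of the edge $i$), overlapping only in the simple socle $S_i\cong P'_i/\Rad P'_i$. The composition factors of $U_X$, read from the top, are exactly $S_{i}, S_{c_1}, S_{c_2},\dots$ where $i,c_1,c_2,\dots$ is the clockwise sequence of edges around $X$ starting at $i$ (using Remark~\ref{rem:inducedequivalence}(b): homomorphisms of projectives run in the direction of the quiver, which is the clockwise cyclic order); similarly for $U_Y$. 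Thus the set of $k$ with $\Hom_{A'}(P'_i,P'_k)\neq 0$ is precisely $\{i\}\cup\{\text{edges around }X\}\cup\{\text{edges around }Y\}$, and the second step is to observe that condition (c) forces this set to be exactly $\{i,i_1,\dots,i_r,j_1,\dots,j_s\}$, so that the two "sides" of $P'_i$ account for the two given loops.

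The third step is to match up each loop with one of the two uniserial constituents and read off the cyclic order. A nonzero map $P'_i\to P'_k$ whose image lies in $\Rad P'_i$ and whose cokernel is uniserial corresponds to a single step in the cyclic order around one of the vertices; composing along the loop (a) and using that $C_{i,X}^{\,m(X)}=C_{i,X}$ (multiplicity one) closes up after exactly $\val(X)$ steps, i.e.\ after returning to $i$. So loop (a), of length $r+1$, records the clockwise sequence $i,i_1,\dots,i_r$ of all edges around one end-point $X$ of $i$, and loop (b), of length $s+1$, records $i,j_1,\dots,j_s$ around the other end-point $Y$; after $i_r$ (resp.\ $j_s$) the next edge clockwise is $i$ again. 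This is exactly the assertion of the lemma, modulo the slightly odd-looking "$i,i_1,\dots,i_r,i_1,i$" in the statement, which I read as a typo for "$i,i_1,\dots,i_r,i$" — the clockwise cyclic list of edges at that vertex. I would state the conclusion in that cleaned-up form and note the correction.

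The main obstacle is the second step: ruling out the possibility that the two loops (a) and (b) wind around the same vertex of $i$, or that one loop already exhausts both vertices (which could happen if $i$ were a loop in the Brauer graph of $A'$). Here one uses that $P'_i$ is genuinely biserial — its radical has two distinct uniserial summands unless $i$ is a leaf at some vertex — together with the fact that the two displayed loops are distinct as stated (they are the two "directions" out of $T_i$ in the basis of $\Rad\End(T)/\Rad^2$), so they cannot both be read off the same uniserial constituent. Since we already know (Lemma~\ref{Brauertree}) that $A'$ is a Brauer \emph{tree} algebra with multiplicity one, every internal edge meets two distinct vertices and no edge is a loop, which makes this step clean; the only care needed is for edges $i$ that are leaves of the tree, where one of the two "loops" degenerates to the trivial loop $T_i\to T_i$ and the corresponding vertex has valency one. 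In that degenerate case the conclusion still holds with the obvious reading ($r=0$ or $s=0$), and I would dispatch it as a remark rather than a separate case.
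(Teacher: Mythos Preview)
Your approach is correct and close in overall shape to the paper's: both transfer the problem to $A'$ via Remark~\ref{rem:inducedequivalence}(a), invoke Lemma~\ref{Brauertree} to know that $A'$ is a Brauer tree algebra with trivial multiplicity, and then extract the cyclic ordering around the two end-points of $i$. The difference lies in how that ordering is pinned down. You read it off structurally from the biserial shape of $P'_i$: the composition factors of the two uniserial constituents of $\Rad P'_i$ record the clockwise sequences around the two vertices, and a length count (each step in the loop contributes at least one arrow, the total must equal $\val(X)$, and condition~(c) forces every edge at $X$ to appear) shows that loop~(a) traces exactly one full cycle in order. The paper instead argues by contradiction on the relative position of each successive $i_{k+1}$: if $i_2$ sat strictly between $i$ and $i_1$ in the cyclic order, then the map $T_i\to T_{i_1}$ would factor through $T_{i_2}$, so loop~(a) would factor as $T_i\to T_{i_2}\to T_{i_1}\to T_{i_2}\to\cdots\to T_i$, a composition strictly longer than one full cycle and hence zero in a multiplicity-one Brauer tree algebra, contradicting the non-vanishing of~(a). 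Iterating forces $i_1,i_2,\dots,i_r$ to be consecutive in the clockwise direction.

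Both arguments rest on the same underlying facts; yours is more structural, the paper's more combinatorial. Your identification of the typo in the conclusion (the printed ``$i,i_1,\dots,i_r,i_1,i$'' should read ``$i,i_1,\dots,i_r,i$'') is correct. One small point: your separation of the two loops onto the two distinct vertices is a little quick --- in principle two distinct nonzero loops could both be factorisations of the same $X$-cycle --- but the paper's own proof is equally brief on this, simply observing that $\Hom(T_{i_1},T_{i_2})\neq 0$ forces $i_2$ to lie at the same end of $i$ as $i_1$, and handling the leaf case (where only one loop exists) by a closing remark.
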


\begin{proof}
By Remark~\ref{rem:inducedequivalence}, the edges in the Brauer graph
of $A'$ can be identified with those in the Brauer graph of $A$.
By (a),(b) and (c), the edges incident
with $i$ must be $\{i_1,\ldots ,i_r,j_1,\ldots ,j_s\}$
(since $\Hom_{A_{\MM}} (T_i, T_k) = 0$ implies that $\Hom_{A'}(P'_i, P'_k)=0$; see Remark~\ref{rem:inducedequivalence}(a)).

Since the Brauer graph of $A'$ is a tree with multiplicity function equal to 1
there are no loops and the edges around a vertex appear only once, i.e.\
there are no repeats.
Since $\Hom(T_{i_1},T_{i_2})\not=0$, $i_2$ must be incident with the same end of $i$ as $i_1$ by Lemma~\ref{Brauertree}.
If the edge $i_2$ was clockwise of $i$ and anticlockwise of $i_1$ (about the common vertex), then the map $T_i \rightarrow T_{i_1}$ would factor through $T_{i_2}$.
Therefore the map in (a) would factor $T_i \rightarrow T_{i_2} \rightarrow
T_{i_1} \rightarrow T_{i_2} \rightarrow T_i$.  However, this composition is
zero by Remark~\ref{rem:inducedequivalence} and Lemma~\ref{Brauertree}
as any composition that is longer than a cycle is zero in a Brauer tree
algebra with multiplicity function 1. Thus we get a contradiction, and 
hence $i_2$ lies clockwise of $i_1$ and anticlockwise of $i$.
If we repeat this argument for all the edges incident
with $i$, the claim follows.
In the case of a leaf, there is only one
non-zero loop, so (a) and (c) imply the clockwise ordering of the edges
around the vertex of $i$ which is not
of valency $1$.
\end{proof}

\emph{Proof of Theorem~\ref{thm:flipderivedequivalent}.}  By
Lemma~\ref{Brauertree}, $A'$ is a Brauer tree algebra with multiplicity function equal to 1.
We now apply the technique of Lemma~\ref{edges} to each indecomposable summand
$T_i$ of $T$. We consider the following compositions in $K^{\flat}(A_{\MM})$
$$ \xymatrix{T_a \ar[r] & T_b \ar[r] & T_{e_{m-2}} \ar[r] & T_{e_{m-3}} \ar[r] &
  \cdots \ar[r] & T_{e_{1}} \ar[r] & T_a}$$
and
$$\xymatrix{T_a \ar[r] & T_c \ar[r] & T_{d_{m-2}} \ar[r] & T_{d_{m-3}} \ar[r] &
 \cdots \ar[r] & T_{d_{1}} \ar[r] & T_a }$$
It is easy to see that we get a non-zero
composition in both cases. Now we check that condition (c) of
the claim above holds. If $j \in G_b \setminus \{ b\} \cup G_c \setminus
\{ c\}$ then any map of complexes from
$T_a=(0 \rightarrow P_a)$ to $T_j=(P_j \rightarrow 0)$ must be zero.
If $j \in G_{d_k} \setminus \{ d_k\} \cup G_{e_k} \setminus \{ e_k\}$
for some $k$, then any map of complexes from
$T_a=(0 \rightarrow P_a)$ to $T_j=(0 \rightarrow P_j)$ must be zero
since, by the defining relations of $A_{\MM}$, there is no non-zero map
$P_a \rightarrow P_j$.
Thus, in both of these cases we have
$\Hom_{K^{\flat}((P(A_{\MM}))} (T_a, T_j) = 0$.
It follows that the edges around $a$ are as in Configuration II above.

Next we consider non-zero loops at $T_b$.
The edge $b$ has two vertices $X$ and $B$. Recall that the edges around vertex
$X$ are given in clockwise order by $b, d_1, d_2, \ldots, d_{m-2}, a$ and back
to $b$. Let the edges around vertex $B$ be given by $b, x_1, x_2, \ldots,
x_{m-1}$ and back to $b$. After the dual Kauer move at the edge $a$, we
have the following configuration of edges around the vertices of $b$. At vertex
$X$ we have in clockwise order $b, a, e_1, e_2, \ldots, e_{m-1}$ and back to $b$
and the order around $B$ remains unchanged. It is then easy to see that we have the
 following two non-zero loops of morphisms starting and ending
at $T_b$
$$\xymatrix{T_b \ar[r] & T_{e_{m-2}} \ar[r] & T_{e_{m-3}} \ar[r] & \cdots
  \ar[r] & T_{e_{1}} \ar[r] & T_a \ar[r] & T_{b}}$$
and
$$\xymatrix{T_b \ar[r] & T_{x_{m-1}} \ar[r] & T_{x_{m-2}} \ar[r] & \cdots \ar[r]
  & T_{x_{1}} \ar[r] & T_b.} $$
If $j \in G_b \backslash \{ b, x_1, \ldots, x_{m-1} \} \cup G_c$, then there
is no non-zero map $T_b \rightarrow T_j$ since \sloppy $\Hom_{A_{\MM}}(P_b, P_j)
= 0$. Similarly, if $j \in G_{d_k} \backslash \{ d_k \}$ or $G_{e_k}$ for some
$k$, then there is no non-zero map $T_b \rightarrow T_j$ in
$K^{\flat}(\P(A_{\MM*}))$ since $\Hom_{A_{\MM}}(P_a, P_j) = 0$.
If $j = d_k$, then there is no non-zero map $T_b \rightarrow T_j$ in
$K^{\flat}(\P(A_{\MM}))$ since the corresponding diagram
$$\xymatrix{P_b \ar[d] \ar[r] & P_a \ar[d] \\ 0 \ar[r] & P_j}$$ cannot
commute.  Therefore the edges incident with $b$ in $\N$ are as claimed.
Note that if $b$ is a leaf then the edges $x_i$ are missing and we can just
omit the corresponding part of this proof.

Similar arguments show that for $k = 1, 2, \ldots, m-2$, the edges in $\MM$
around edge $d_k$ with vertices $X$ and $D_{k}$ are given clockwise around
vertex $X$ by $d_k, d_{k+1}, \ldots, d_{m-2}, a, b, d_1, d_2, \ldots, d_{k-1}$
and back to $d_k$ and around vertex $D_{k}$ by $d_k, y_1, y_2, \ldots, y_{m-1}$
and back to $d_k$.  After the dual Kauer move at $a$ the edges around $X$ are
given by $d_k, d_{k+1}, \ldots, d_{m-2}, c, a, d_1, d_2, \ldots, d_{k-1}$ and
back to $d_k$ and around vertex $D_{k}$ they remain the same. By arguments
similar to those used above there are non-zero loops of morphisms in the $T_i$
around both vertices, leading to non-zero maps in the quiver of $A'$.
Furthermore, $\Hom_{K^{\flat}(P(A_{\MM}))} (T_{d_{k}}, T_j) = 0 $ if $j$ is an edge in
any of the subtrees or unions of subtrees $G_b$, $G_c \setminus \{c \}$,
$\bigcup_{l \neq k} G_{d_{l}} \backslash \{d-l \}$, $G_{d_{k}} \backslash \{d-k,
y_1, \ldots, y_{m-1} \}$, or $\bigcup_l G_{e_{l}}$. Thus the edges around $d_k$
in the tree of $A'$ are as claimed.

Finally, consider an edge $x_k$, for $k = 1, 2, \ldots, m-1$ with vertices $B$
and $X_k$. Then in $\MM$ we have the configuration
$x_k, x_{k+1}, \ldots, x_{m-1}, b, x_1,x_2, \ldots, x_{k-1}$
around the vertex $B$ in the clockwise direction and around $X_k$ we have
the configuration $x_k, z_1, z_2, \ldots, z_{m-1}$.
The dual Kauer move on the edge $a$ does not change the configuration
around the vertices of $x_k$.  There are non-zero loops of morhpisms
$$T_{x_k}
\rightarrow T_{x_{k-1}} \rightarrow \cdots \rightarrow T_{x_{1}} \rightarrow T_b
\rightarrow T_{x_{m-1}} \rightarrow T_{x_{m-2}} \rightarrow T_{x_{k}}$$
and
$$T_{x_k} \rightarrow T_{z_{m-1}} \rightarrow T_{z_{m-2}} \rightarrow \cdots
T_{z_{1}} \rightarrow T_{x_{k}}.$$
Moreover, it is easy to check that
$\Hom_{K^{\flat}(P(A_{\MM}))} (T_{d_{k}}, T_j) = 0 $ for all $j$ in $\{ c \}$,
$G_b \setminus \{ b, x_1, x_2, \ldots, x_{m-1} \}$, $\bigcup_k G_{d_{k}}
\setminus \{ d_k \}$, $G_c \setminus \{c, w_1, w_2, \ldots, w_{m-1} \}$ (where
the edges $w_1, w_2, \ldots, w_{m-1}$ are those incident to $c$) or $G_{e_{k}}$ for
some $k$. Thus the arrows in the quiver of $A'$ are the same as in
$A_{\NN}$.

We now have considered all cases and have shown that the Brauer tree of $A'$ is
as claimed. \hfill $\square$

\begin{remark}
\begin{enumerate}[(a)]
\item The same arguments (with easy minor modifications) apply if $\MM$ is replaced
with $\mathcal{M}^*$, that is if we remove the leaf vertices (and the adjacent
edges).
\item The same arguments also apply if we consider a general `angulation' of
$P_n$, that is, with no restrictions on the sizes of the subpolygons.
This is the case since the sizes of the polygons do not play a role in the proofs.
\item Theorem~\ref{thm:flipderivedequivalent} gives a third derived equivalence in
the disk case, to add to the other two discussed in Remark~\ref{rem:twoequivalences}.
\end{enumerate}
\end{remark}

\section{Choice of graphs and counter-examples}
\label{sec:counterexamples}

In this section we describe a number of situations where there is no
derived equivalence corresponding to the dual Kauer move defined in the
previous section.
These counter-examples have motivated our choice of dual graph in the disc case.
The proofs all rely on the same principle, based on Sylvester's law of inertia,
which for the convenience of the reader we will recall here (see e.g.\
\cite{HornJohnson85}). This principle provides us with a useful
criterion for distinguishing when two finite-dimensional algebras are not
derived equivalent.

\begin{theorem}[Sylvester's law of inertia]\label{Sylvester}
Let $A$ and $B$ be symmetric real square matrices. Assume $A$ is congruent to
$B$, that is there exists a matrix $P \in \rm{GL_n} (\mathbb{R})$
such that $P A P^T = B$. Then $A$ and $B$ have the same number of strictly positive eigenvalues,
strictly negative eigenvalues, and zero eigenvalues.
\end{theorem}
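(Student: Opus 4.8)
The plan is to reduce the statement to the standard variational (subspace) characterisation of the numbers of positive and negative eigenvalues of a real symmetric matrix, and then to observe that this characterisation refers only to the associated quadratic form $x\mapsto x^TAx$, which is carried to $x\mapsto x^TBx$ under the congruence. First I would dispose of the zero eigenvalues: for a real symmetric matrix the number of zero eigenvalues is $n$ minus the rank, and since $P\in\operatorname{GL}_n(\mathbb{R})$ we have $\operatorname{rank}(PAP^T)=\operatorname{rank}(A)$, so $A$ and $B$ have the same number of zero eigenvalues.

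Next I would establish the key lemma: if $A$ is a real symmetric $n\times n$ matrix, then the number $n_+(A)$ of strictly positive eigenvalues of $A$ equals the maximum of $\dim V$ over all subspaces $V\subseteq\mathbb{R}^n$ with $x^TAx>0$ for every nonzero $x\in V$. To prove this, I would apply the spectral theorem to write $\mathbb{R}^n=V_+\oplus V_{\le 0}$ as an orthogonal direct sum, where $V_+$ is spanned by the eigenvectors with strictly positive eigenvalue (so $\dim V_+=n_+(A)$) and $V_{\le 0}$ by those with non-positive eigenvalue (so $\dim V_{\le 0}=n-n_+(A)$). The form is positive definite on $V_+$, giving one inequality; conversely, any subspace $V$ with $\dim V>n_+(A)$ must meet $V_{\le 0}$ nontrivially by a dimension count, yielding a nonzero $x\in V$ with $x^TAx\le 0$, so $V$ fails the defining property. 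This gives the reverse inequality.

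Finally I would invoke congruence invariance. Setting $Q=P^T\in\operatorname{GL}_n(\mathbb{R})$ we have $B=Q^TAQ$, hence $x^TBx=(Qx)^TA(Qx)$ for all $x$. Therefore a subspace $V$ satisfies $x^TBx>0$ for all nonzero $x\in V$ if and only if $QV$ satisfies the corresponding condition for $A$, and since $Q$ is invertible $\dim QV=\dim V$; so the two maxima coincide and $n_+(A)=n_+(B)$. Applying the same reasoning to $-A$ and $-B$ (which are congruent via the same $P$) gives equality of the numbers of strictly negative eigenvalues, and the equality of ranks already handled the zero eigenvalues, completing the proof.

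The only real obstacle in this otherwise classical argument is the proof of the variational characterisation of $n_+(A)$ — specifically the dimension-counting step showing that no subspace of dimension exceeding $n_+(A)$ can carry a positive definite restriction of the form; the rest is routine bookkeeping with the relation $B=PAP^T$.
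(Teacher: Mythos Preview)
Your argument is correct and is one of the standard proofs of Sylvester's law of inertia. Note, however, that the paper does not give its own proof of this classical theorem: it is merely stated and attributed to a standard reference (Horn--Johnson, \emph{Matrix Analysis}), so there is no in-paper proof to compare against; your subspace/variational approach is precisely the kind of argument one finds in such references.
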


We recall the following result from \cite{BocianSkowronski}.

\begin{proposition}\label{pro:conjugate}
Let $A$ and $B$ be two finite-dimensional, derived equivalent algebras with Cartan
matrices $C_A$ and $C_B$ respectively. Let $n$ denote the number of simple
modules of $A$ and $B$ (up to isomorphism). Then there exists a matrix $P \in
\rm{GL_n}(\mathbb{Z})$ such that $P C_A P^T = C_B$.
\end{proposition}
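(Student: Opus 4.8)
The plan is to deduce the congruence $PC_AP^{T}=C_B$ from the realisation of a derived equivalence by a tilting complex (Rickard~\cite{Rickard89Morita}) together with the transformation behaviour of the Euler form on the Grothendieck group of perfect complexes. Since $A$ and $B$ are derived equivalent, by Rickard there is a tilting complex $T=\bigoplus_{i=1}^{n}T_i$ in $\K^{\flat}(\P(A))$ with $T_1,\dots,T_n$ indecomposable and pairwise non-isomorphic (there are exactly $n$ of them since $\End_{\K^{\flat}(\P(A))}(T)^{\op}\cong B$ has $n$ simple modules). The first step is to record that $K_0(\K^{\flat}(\P(A)))$ is free abelian of rank $n$, with one basis given by the classes $[P_1],\dots,[P_n]$ of the indecomposable projective $A$-modules and a second basis given by the classes $[T_1],\dots,[T_n]$ (the $T_i$ generate $\K^{\flat}(\P(A))$ as a triangulated category by condition (ii) in the definition of a tilting complex, hence their classes generate $K_0$, and $n$ generators of a free abelian group of rank $n$ form a basis). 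Let $P\in\mathrm{GL}_n(\mathbb{Z})$ be the change-of-basis matrix expressing the $[T_i]$ in terms of the $[P_j]$.

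The second step is to compare the Euler bilinear form
$$\langle X,Y\rangle=\sum_{r\in\mathbb{Z}}(-1)^{r}\dim_K\Hom_{\K^{\flat}(\P(A))}(X,Y[r])$$
on these two bases. This form is additive along distinguished triangles in each variable and so descends to a well-defined bilinear form on $K_0(\K^{\flat}(\P(A)))$. On the indecomposable projectives $\Hom_{\K^{\flat}(\P(A))}(P_i,P_j[r])=0$ for $r\neq0$, so $\langle[P_i],[P_j]\rangle=\dim_K\Hom_A(P_i,P_j)$, and hence the Gram matrix of $\langle\,,\,\rangle$ in the basis $\{[P_j]\}$ is the Cartan matrix $C_A$ (up to the convention-dependent transpose). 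On the other hand, because $T$ is a tilting complex, $\Hom_{\K^{\flat}(\P(A))}(T_i,T_j[r])=0$ for $r\neq0$ by condition (i), so $\langle[T_i],[T_j]\rangle=\dim_K\Hom_{\K^{\flat}(\P(A))}(T_i,T_j)$; and since $\Hom_{\K^{\flat}(\P(A))}(T,-)$ restricts to an equivalence $\add T\to\P(B)$ sending $T_i$ to the indecomposable projective $B$-module $Q_i$ and preserving dimensions of Hom-spaces, this equals $\dim_K\Hom_B(Q_i,Q_j)$, i.e.\ the corresponding entry of $C_B$. Thus the Gram matrix of the same form in the basis $\{[T_i]\}$ is $C_B$.

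The last step is the standard change-of-basis law for the Gram matrix of a bilinear form: passing from $\{[P_j]\}$ to $\{[T_i]\}$ via $P$ turns $C_A$ into $P^{T}C_AP$, so $P^{T}C_AP=C_B$; replacing $P$ by $P^{T}$ (still an element of $\mathrm{GL}_n(\mathbb{Z})$) yields $PC_AP^{T}=C_B$, as asserted. I expect the only real friction to be the bookkeeping of transposes---which side the Cartan matrix naturally multiplies on, for left versus right modules and for the two conventions for $C_A$---but this washes out because $C_A$ and $C_A^{T}$ are interchanged by transposing the whole identity, and the verification that the Euler pairing is well defined on $K_0$ and is computed on projectives by ordinary Hom-dimensions is routine. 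No step requires substantive new work; this is essentially the argument of~\cite{BocianSkowronski}.
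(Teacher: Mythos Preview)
Your argument is correct and is the standard proof of this well-known fact. Note, however, that the paper does not actually give a proof of this proposition: it is stated as a result recalled from~\cite{BocianSkowronski}, so there is no ``paper's own proof'' to compare against. Your write-up is essentially what one would expect to find in that reference.
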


Combining Sylvester's law of inertia with Proposition~\ref{pro:conjugate} gives
a citerion which can be used to show that two finite-dimensional algebras
are not derived equivalent.

\begin{corollary}\label{EigenvalueCriterion}
Let A and B be two finite-dimensional, derived equivalent algebras with Cartan
matrices $C_A$ and $C_B$ respectively.  Then $C_A$ and $C_B$ have the same
number of strictly positive, strictly negative and zero eigenvalues.
\end{corollary}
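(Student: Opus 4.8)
\textbf{Proof proposal for Corollary~\ref{EigenvalueCriterion}.}
The plan is to combine the two results that immediately precede the statement, namely Proposition~\ref{pro:conjugate} and Sylvester's law of inertia (Theorem~\ref{Sylvester}), in the obvious way. First I would invoke Proposition~\ref{pro:conjugate}: since $A$ and $B$ are finite-dimensional and derived equivalent, with $n$ the common number of isomorphism classes of simple modules, there exists $P\in\mathrm{GL}_n(\mathbb{Z})$ with $PC_AP^T=C_B$. In particular $P$ lies in $\mathrm{GL}_n(\mathbb{R})$, so $C_A$ and $C_B$ are congruent as real matrices.

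The one point that requires a word of care is that Sylvester's law of inertia is stated for \emph{symmetric} real matrices, so I would note that the Cartan matrix of a finite-dimensional algebra is in general not symmetric. However, the algebras of interest to us in Section~\ref{sec:counterexamples} are symmetric (in fact Brauer tree algebras), and for a symmetric algebra the Cartan matrix is symmetric: $\dim\Hom_A(P_i,P_j)=\dim\Hom_A(P_j,P_i)$ by the symmetrising form. Alternatively, one may simply restrict the statement of the corollary to the case where $C_A$ and $C_B$ are symmetric, which is all that is used in the applications that follow. Assuming this, $C_A$ and $C_B$ are congruent symmetric real matrices.

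Now apply Theorem~\ref{Sylvester} with the congruence $PC_AP^T=C_B$ to conclude that $C_A$ and $C_B$ have the same number of strictly positive, strictly negative, and zero eigenvalues. This is exactly the assertion of the corollary.

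There is essentially no obstacle here: the corollary is a direct splicing of the two cited results, and the only subtlety is the symmetry hypothesis needed to legitimately apply Sylvester's law, which is harmless because the Cartan matrices arising in the subsequent counter-examples (Brauer tree algebras, hence symmetric algebras) are symmetric. The content of the section lies not in this corollary but in computing the relevant Cartan matrices and their eigenvalue signatures in the specific examples of a triangulated sphere and a punctured disc.
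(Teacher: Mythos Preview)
Your proposal is correct and follows essentially the same approach as the paper, which simply states that the corollary is obtained by combining Sylvester's law of inertia with Proposition~\ref{pro:conjugate}. Your additional remark about the symmetry hypothesis on the Cartan matrices is a valid clarification that the paper leaves implicit (the algebras in the applications are Brauer graph algebras, hence symmetric, so their Cartan matrices are symmetric).
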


\subsection{Dual graph of a triangulation of a polygon}

Given a triangulation $\T$ of a polygon, instead of considering the graph $\TT$
as in Section~\ref{sec:m_angulations} we could have considered the dual graph
of $\T$, which can be obtained from $\TT$ by identifying all of its boundary
vertices. We call this graph $\T^\vee$. Note that this graph is not necessarily a
Brauer tree anymore, but it is a Brauer graph (where, we think of the graph embedded in a sphere and as usual we use a local
embedding in the plane to get the cyclic ordering).
Let $a^*$ be one of the internal edges of $\T^\vee$.
We denote by $\mu^*_{a^*}(\T^\vee)$ the Brauer graph obtained by applying
a dual Kauer move to $\T^{\vee}$ at $a^*$. We shall see that the Brauer graph
algebras $A_{\T^\vee}$ and $A_{\mu^*_{a^*}(\T^\vee)}$ are not always derived equivalent.

We consider first the example in Figure~\ref{fig:DiscBoundaryIdentified}.
We will use the criterion based on Sylvester's law of inertia to show that there
is no derived equivalence in this case.

\begin{figure}[h!]
\psfragscanon \psfrag{a}{$\scriptstyle a$} \psfrag{b}{$\scriptstyle b$}
\psfrag{c}{$\scriptstyle c$} \psfrag{d}{$\scriptstyle d$}
\psfrag{e}{$\scriptstyle e$} \psfrag{f}{$\scriptstyle f$}
\psfrag{g}{$\scriptstyle g$} \psfrag{h}{$\scriptstyle h$}
\psfrag{i}{$\scriptstyle i$} \psfrag{T}{$ \T^\vee$} \psfrag{U}{$
  \mu^*_{a^*}(\T^\vee)$}
\begin{center}
\includegraphics[width=10cm]{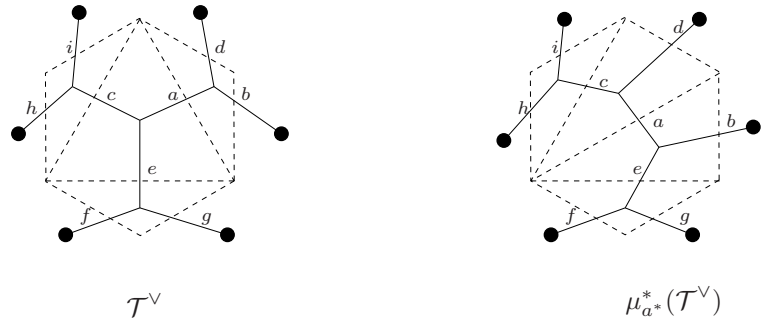}
\end{center}
\caption{In the above figures the black vertices are all identified, that is in
  each figure they represent one single vertex, namely a vertex corresponding to
  a point external to the disc.}
\label{fig:DiscBoundaryIdentified}
\end{figure}

The Cartan matrices $\mathcal{C}_{\T^\vee}$ and
$\mathcal{C}_{\mu^*_{a^*}(\T^\vee)}$ of $A_{\T^\vee}$ and
$A_{\mu^*_{a^*}(\T^\vee)}$, respectively, are given by

$$ \mathcal{C}_{\T^\vee} = \bordermatrix{ ~ & a & b & c & d & e & f & g & h & i
  \cr a& 2&1&1&1&1&0&0&0&0 \cr b&1&2&0&2&0&1&1&1&1 \cr c& 1&0&2&0&1&0&0&1&1 \cr
  d&1&2&0&2&0&1&1&1&1 \cr e&1&0&1&0&2&1&1&0&0 \cr f&0&1&0&1&1&2&2&1&1 \cr
  g&0&1&0&1&1&2&2&1&1 \cr h&0&1&1&1&0&1&1&2&2 \cr i&0&1&1&1&0&1&1&2&2 \cr }
\mbox{ and } \mathcal{C}_{\mu^*_{a^*}(\T^\vee)} = \bordermatrix{ ~ & a & b & c &
  d & e & f & g & h & i \cr a& 2&1&1&1&1&0&0&0&0 \cr b&1&2&0&1&1&1&1&1&1 \cr c&
  1&0&2&1&0&0&0&1&1 \cr d&1&1&1&2&0&1&1&1&1 \cr e&1&1&0&0&2&1&1&0&0 \cr
  f&0&1&0&1&1&2&2&1&1 \cr g&0&1&0&1&1&2&2&1&1 \cr h&0&1&1&1&0&1&1&2&2 \cr
  i&0&1&1&1&0&1&1&2&2 \cr }$$

\bigskip

The characteristic polynomial of $\mathcal{C}_{\T^\vee}$ is
$$ P_{\T^\vee}(x) = x^9-18x^8+111x^7-288x^6+270x^5,$$ and
$\mathcal{C}_{\T^\vee}$ has 5 eigenvalues equal to zero.
The characteristic polynomial of $\mathcal{C}_{\mu^*_{a^*}(\T^\vee)}$ is
$$ P_{\mu^*_{a^*}(\T^\vee)} (x) = x^9-18x^8+113x^7-316x^6+395x^5-180x^4$$ and
$\mathcal{C}_{\mu^*_{a^*}(\T^\vee)}$ has 4 eigenvalues equal to zero. It
follows from Corollary~\ref{EigenvalueCriterion} that the algebras $A_{\T^\vee}$
and $A_{\mu^*_{a^*}(\T^\vee)}$ are not derived equivalent.

\subsection{Punctured disc case}

Even without making the boundary identification in the dual graph as in the
previous example, in the punctured disc case we have an example of
a triangulation $\T$ and an edge $a$, where the Brauer
graph algebras corresponding to the graphs $\TT$ and $\mu^*_{a^*}(\TT)$
(as defined in Section~\ref{sec:m_angulations}) are not derived equivalent.

\begin{figure}[h!]
\psfragscanon \psfrag{a}{$\scriptstyle a$} \psfrag{b}{$\scriptstyle b$}
\psfrag{c}{$\scriptstyle c$} \psfrag{d}{$\scriptstyle d$}
\psfrag{e}{$\scriptstyle e$} \psfrag{f}{$\scriptstyle f$}
\psfrag{g}{$\scriptstyle g$} \psfrag{h}{$\scriptstyle h$} \psfrag{T}{$\TT$}
\psfrag{U}{$ \mu^*_{a^*}(\TT)$}
\begin{center}
\includegraphics[width=10cm]{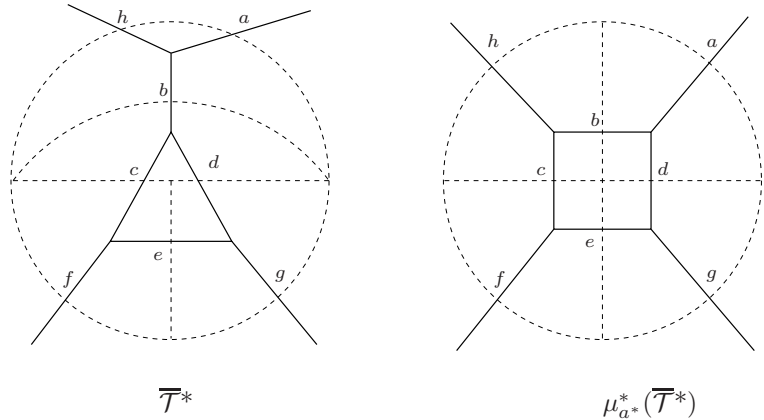}
\end{center}
\caption{Triangulation of a punctured disc and dual graph}
\label{fig:PuncturedDiscCE}
\end{figure}

In Figure~\ref{fig:PuncturedDiscCE} we consider the dual graph of a triangulation
of a disc with $4$ boundary vertices and one puncture. Note that the boundary
vertices are not identified in this set-up.

The Cartan matrices $\mathcal{C}_{\T^\vee}$ and
$\mathcal{C}_{\mu^*_{a^*}(\T^\vee)}$ of $A_{\T^\vee}$ and
$A_{\mu^*_{a^*}(\T^\vee)}$, respectively, are given by

$$ \mathcal{C}_{\T^\vee} = \bordermatrix{ ~ & a & b & c & d & e & f & g & h \cr
  a& 2&1&0&0&0&0&0&1 \cr b&1&2&1&1&0&0&0&1 \cr c& 0&1&2&1&1&1&0&0 \cr
  d&0&1&1&2&1&0&1&0 \cr e&0&0&1&1&2&1&1&0 \cr f&0&0&1&0&1&2&0&0 \cr
  g&0&0&0&1&1&0&2&0 \cr h&1&1&0&0&0&0&0&2 \cr } \mbox{ and }
\mathcal{C}_{\mu^*_{a^*}(\T^\vee)} = \bordermatrix{ ~ & a & b & c & d & e & f &
  g & h \cr a& 2&1&0&1&0&0&0&0 \cr b&1&2&1&1&0&0&0&1 \cr c& 0&1&2&0&1&1&0&1 \cr
  d&1&1&0&2&1&0&1&0 \cr e&0&0&1&1&2&1&1&0 \cr f&0&0&1&0&1&2&0&0 \cr
  g&0&0&0&1&1&0&2&0 \cr h&0&1&1&0&0&0&0&2 \cr }$$

\bigskip

Then the determinant of $\mathcal{C}_{\T^\vee}$ is 4 and the determinant of $\mathcal{C}_{\mu^*_{a^*}(\T^\vee)}$ is 0
and thus by proposition \ref{pro:conjugate} the algebras $A_{\T^\vee}$
and $A_{\mu^*_{a^*}(\T^\vee)}$ are not derived equivalent.

\end{document}